\newtheorem{theorem}{Theorem}[section]
\newtheorem{lemma}[theorem]{Lemma}
\newtheorem{prop}[theorem]{Proposition}
\newtheorem{cor}[theorem]{Corollary}
\newtheorem{conj}[theorem]{Conjecture}
\theoremstyle{definition}
\newtheorem{definition}[theorem]{Definition}
\theoremstyle{remark}
\newtheorem*{remark}{Remark}
\newcommand{\RR}{{\mathbf R}}
\newcommand{\QQ}{{\mathbf Q}}
\newcommand{\ZZ}{{\mathbf Z}}
\newcommand{\Sol}{{\mathrm{Sol}}}
\newcommand{\Hom}{{\mathrm{Hom}}}
\newcommand{\ud}{{\mathrm{d}}}
\newcommand{\CAT}{{\mathrm{CAT}}}
\newcommand{\tori}{{\mathcal{T}}}
\newcommand{\pieces}{{\mathcal{J}}}
\newcommand{\cfggraph}{{\Lambda}}
\newcommand{\vrtx}{{\mathrm{Ver}}}
\newcommand{\edge}{{\mathrm{Edg}}}
\newcommand{\lk}{{\mathrm{lk}}}
\newcommand{\cuts}{{\mathcal{Z}}}
\newcommand{\binds}{{\mathcal{W}}}
\newcommand{\pants}{{\mathcal{P}}}
\newcommand{\hypps}{{\mathcal{H}}}
\newcommand{\cutgraph}{{\Upsilon}}
\title[Virtual cubulation of NPC graph manifolds]{Virtual cubulation of nonpositively curved graph manifolds}
\author[Yi Liu]{%
        Yi Liu} 
\address{%
    Mathematics 253-27\\
    California Institute of Technology\\
    Pasadena, CA 91125} 
\email{% 
    yliumath@caltech.edu}  
\subjclass[2010]{Primary 57M05; Secondary 20F65}
\date{% 
 \today}
\begin{document}

\begin{abstract}
	In this paper, we show that an aspherical compact graph manifold is 
	nonpositively curved if and only if its fundamental group virtually embeds
	into a right-angled Artin group. As a consequence, nonpositively curved
	graph manifolds have linear fundamental groups.
\end{abstract}

\maketitle

\section{Introduction}

	In this paper, we study virtual properties
	(i.e.~properties that hold for some finite covering)
	of nonpositively curved graph manifolds.
	Our main result is the following, (cf. Section \ref{Sec-Prelim}
	for terminology):
	
	\begin{theorem}\label{main-npcGM} 
		Let $M$ be an aspherical compact graph manifold. 
		Then the following statements are equivalent:
		\begin{enumerate}
			\item $M$ is nonpositively curved;
			\item $M$ is virtually homotopy equivalent to a special cube complex;
			\item $\pi_1(M)$ virtually embeds into a finitely generated right-angled Artin group;
			\item $\pi_1(M)$ is virtually RFRS.
		\end{enumerate}
	\end{theorem}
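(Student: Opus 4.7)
The plan is to run the cycle $(1) \Rightarrow (2) \Rightarrow (3) \Rightarrow (4) \Rightarrow (1)$. Two of these implications are essentially formal consequences of established theory. For $(2) \Rightarrow (3)$ I invoke Haglund--Wise: the fundamental group of a compact special cube complex embeds into a finitely generated right-angled Artin group via the map determined by the hyperplane coloring. For $(3) \Rightarrow (4)$ I use Agol's observation that finitely generated RAAGs are RFRS, together with the fact that being virtually RFRS is inherited by subgroups and passes to finite-index overgroups.

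The substantial work lies in $(1) \Rightarrow (2)$. Given an NPC graph manifold $M$, the strategy is to produce, in a finite cover, a rich enough system of two-sided incompressible immersed surfaces, apply Sageev's dual cube complex construction to obtain a proper cocompact cubulation of $\pi_1(M)$, and finally upgrade this cubulation to a virtually special one. In each JSJ piece I would construct two flavors of walls: \emph{vertical} walls coming from essential one-suborbifolds of the Seifert base, and \emph{horizontal} walls coming from virtual surface fibrations. The nonpositive curvature hypothesis, in the form of Buyalo--Svetlov's chargelessness condition (equivalently Leeb's $2\pi$-condition), is precisely what permits these local walls to be glued compatibly across the JSJ tori into a $\pi_1(M)$-invariant wallspace on $\widetilde{M}$ with enough walls both to separate points and to make the Sageev cubulation cocompact. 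To upgrade the cubulation to virtually special in the absence of word-hyperbolicity one cannot cite Agol's theorem directly; instead I would verify that each of Haglund--Wise's four hyperplane pathologies can be killed in some finite cover, using double-coset separability results for JSJ tori in Seifert vertex groups of the Hamilton--Wilton--Zalesskii type.

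For $(4) \Rightarrow (1)$ I argue by contrapositive. If $M$ is a graph manifold that fails to be NPC, then Buyalo--Svetlov produce a non-zero charge supported on some JSJ torus. The cleanest route is via Agol's virtual fibering criterion: virtual RFRS forces $M$ to virtually fiber over a circle, but a non-zero charge provides a cohomological (equivalently Thurston-norm) obstruction to any such virtual fibration, contradicting virtual RFRS. A backup would be to read off directly from a RFRS cofinal tower of abelian covers that the gluing of adjacent Seifert fibrations across each JSJ torus must have the $\CAT(0)$-compatible matrix shape.

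The main obstacle will be the cubulation-plus-specialness step in $(1) \Rightarrow (2)$: constructing walls within a single Seifert piece is routine, but assembling them equivariantly across JSJ tori so that the resulting hyperplanes interact transversely and meet the Haglund--Wise conditions, and doing so without the hyperbolic toolkit, is the technical heart. The gluing must be delicate enough that boundary slopes of vertical and horizontal walls from neighboring pieces are simultaneously controlled in one finite cover, which is where I expect to spend most of the effort.
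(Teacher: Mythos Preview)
Your cycle and the two formal implications $(2)\Rightarrow(3)$ and $(3)\Rightarrow(4)$ match the paper exactly. For $(1)\Rightarrow(2)$ you sketch the Sageev-plus-separability route; the paper instead reduces to mapping tori of multitwists and builds an explicit cube complex $X\cong M_\sigma\times\RR^n$ out of a ``cut--bind system'' on the fiber, then checks virtual specialness by hand using LERF of free groups. Your approach is essentially that of Przytycki--Wise (cited in the paper as independent work) and is viable in principle, though your identification of the NPC hypothesis with a ``chargelessness condition'' is not accurate --- the BKN criterion is more subtle than the vanishing of charges.

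The real gap is in $(4)\Rightarrow(1)$. Your primary argument is: virtually RFRS $\Rightarrow$ virtually fibered (Agol), while not-NPC $\Rightarrow$ not virtually fibered via a charge obstruction. The second implication is false. The paper itself gives the counterexample (Subsection~\ref{Subsec-graphMfd}): the mapping torus of a product of \emph{positive} Dehn twists along disjoint curves is fibered by construction, yet is not nonpositively curved. Virtual fibering is strictly weaker than NPC for graph manifolds, so no argument that only extracts virtual fibering from RFRS can close the cycle. Your ``backup'' of reading off matrix shapes from a RFRS tower is too vague to assess.

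What the paper actually does for $(4)\Rightarrow(1)$ is the following, and it is where RFRS is used beyond Agol's fibering criterion: reduce to $M_\sigma$ for a multitwist $\sigma$ with bipartite configuration graph; use a RFRS tower to find a finite cover in which every defining curve $z_e$ is rationally homologically nontrivial (this is strictly more than producing a fiber class); translate this into a nondegenerate symmetric solution of a linear system (the \emph{current equations}) on the JSJ graph; then, by a perturbation/implicit-function argument, convert this into a nondegenerate symmetric solution of the BKN equations, which by Buyalo--Svetlov is equivalent to NPC. The passage from RFRS to ``every $z_e$ survives in $H_1(\cdot;\QQ)$ of some finite cover'' is the step your proposal is missing.
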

	
	By the RFRS criterion of Ian Agol for virtual fibering (\cite{Ag}, cf.~Subsection \ref{Subsec-RFRS}),
	Theorem \ref{main-npcGM} strengthens a result of Pavel Svetlov that nonpositively curved graph manifolds 
	virtually fibers over the circle, (\cite{Sv}, cf.~Subsection \ref{Subsec-graphMfd}). We have also
	learned recently that Piotr Przytycki and Daniel Wise have an independent proof 
	of the special cubulation for the bounded case using criteria of separability and 
	double-coset separability of surface subgroups, (\cite{PW-graph}).
	
	It has also been conjectured that $3$-manifold groups are linear. Because
	finitely generated right-angled Artin groups
	are known to be linear (cf.~Subsection \ref{Subsec-RAAG}), and because linearity is a commensurability-class property,
	an interest of Theorem \ref{main-npcGM} is the following: 
	
	\begin{cor} 
		The fundamental group of any nonpositively curved compact graph manifold 
		is linear.
	\end{cor}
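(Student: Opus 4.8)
The plan is to read the corollary off Theorem~\ref{main-npcGM} together with the known linearity of finitely generated right-angled Artin groups. Let $M$ be a nonpositively curved compact graph manifold. The first step is to observe that $M$ is aspherical: a compact manifold carrying a metric of nonpositive curvature is complete and locally $\mathrm{CAT}(0)$, so by the Cartan--Hadamard theorem its universal cover is contractible. Hence Theorem~\ref{main-npcGM} applies to $M$, and the implication $(1)\Rightarrow(3)$ furnishes a finite-index subgroup $G\leq\pi_1(M)$ together with an injective homomorphism $G\hookrightarrow A$ into some finitely generated right-angled Artin group $A$.

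The second step invokes the linearity of finitely generated right-angled Artin groups (cf.~Subsection~\ref{Subsec-RAAG}): $A$ admits a faithful finite-dimensional linear representation, and therefore so does its subgroup $G$, since linearity passes trivially to subgroups. It then remains to pass from the finite-index subgroup $G$ back up to the full group $\pi_1(M)$. This is precisely the assertion that linearity is a commensurability-class property (cf.~Subsection~\ref{Subsec-RAAG}); concretely, one may replace $G$ by its normal core to assume that $G$ is normal in $\pi_1(M)$, and then the representation of $\pi_1(M)$ induced from a faithful finite-dimensional representation of $G$ is itself faithful, because its kernel is a finite normal subgroup meeting $G$ trivially and $\pi_1(M)$ is torsion-free, being the fundamental group of an aspherical manifold. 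Either way one obtains a faithful finite-dimensional linear representation of $\pi_1(M)$.

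I do not expect any genuine obstacle beyond Theorem~\ref{main-npcGM} itself, which carries all of the weight. The only point deserving a word of care is the final passage from a finite-index subgroup to the ambient group, which can fail for groups with torsion but is harmless here; all the substantive work has already been done in establishing the virtual embedding into a right-angled Artin group.
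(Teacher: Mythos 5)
Your proof is correct and follows the same route the paper implicitly takes: combine Theorem~\ref{main-npcGM}~(3) with the linearity of finitely generated right-angled Artin groups and the fact that linearity passes between commensurable groups. Your spelled-out argument for the last step via the normal core and torsion-freeness is fine, though in fact commensurability invariance of linearity holds without the torsion-free hypothesis (one can sum the induced representation with the pullback of a faithful representation of the finite quotient to kill the finite kernel).
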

	
	Theorem \ref{main-npcGM} is motivated by some recent progress in the
	Virtual Fibering Conjecture (VFC). Back to the 1980s, William Thurston
	conjectured that every hyperbolic $3$-manifold virtually fibers over the circle,
	(\cite{Th-VFC}).
	Two years ago, Daniel Wise announced a proof of this conjecture
	for hyperbolic $3$-manifolds that contain essentially embedded 
	geometrically finite closed subsurfaces, using quasiconvex hierarchies
	(\cite{Wi}). In fact, he showed that every such manifold
	is virtually homotopy equivalent to a finite-dimensional
	special cube complex. Hence the fundamental group
	of any such manifold virtually embeds into a right-angled Artin group. 
	This implies virtual fibering by Agol's criterion. 
	Thus the VFC for hyperbolic $3$-manifolds is implied by
	the Virtual Haken Conjecture (VHC) that every closed $3$-manifold
	virtually contains an (embedded) incompressible subsurface.
	In more general context, the statement of the
	VFC is known to be false for many graph manifolds, (cf.~Subsection
	\ref{Subsec-graphMfd}), and for any
	closed Seifert-fibered space with nontrivial Euler class of the fiber. 
	Nevertheless, it has been conjectured that
	the VFC holds for nonpositively curved $3$-manifolds,
	(cf.~\cite[Conjecture 9.1]{Ag}). Based on Wise's work and Theorem
	\ref{main-npcGM}, a reasonable specification of this conjecture might be the
	following\footnote{There has been significant progress on virtual specialization
	of $3$-manifold groups since the announcement of this paper in the fall of 2011.
	The remaining cases	of Conjecture \ref{npcVSp} are verified by
	\cite{Agol-VHC,PW-mixed}.
	The introduction is now a little behind the times
	but we prefer keeping it as a historical record.}:
	
	\begin{conj}\label{npcVSp}
		An aspherical compact $3$-manifold is nonpositively curved
		if and only if its fundamental group virtually embeds into a right-angled 
		Artin group.
	\end{conj}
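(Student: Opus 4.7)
The plan is to attack both directions of the conjecture by reducing to the JSJ decomposition of $M$ and invoking separate virtual specialization results on each piece, with combination theorems handling the gluings. This approach would use Theorem~\ref{main-npcGM} for any pure graph-manifold component and Wise's theorem for any hyperbolic component, and would rely on double-coset separability of peripheral surface subgroups to interface the two across JSJ tori.

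For the forward direction, suppose $M$ is an aspherical compact 3-manifold admitting an NPC metric. By the geometric decomposition and Leeb's characterization, each JSJ piece is either hyperbolic or Seifert fibered with trivial rational Euler number along the fibering. The pure graph-manifold case is precisely Theorem~\ref{main-npcGM}. For the pure hyperbolic case I would appeal to Wise's theorem that a hyperbolic 3-manifold group containing a quasi-Fuchsian closed incompressible surface subgroup virtually embeds in a RAAG; producing such a surface in a finite cover is the Virtual Haken input. For mixed manifolds, the strategy is to pass to a common finite regular cover on which every JSJ piece has become virtually compact special, and then glue via a Haglund--Wise combination theorem along the JSJ tori. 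Carrying out the gluing requires the peripheral surface subgroup of each adjacent piece to be separable, malnormal, and double-coset separable; these properties should follow from the virtual specialness of the individual pieces.

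For the reverse direction, suppose $\pi_1(M)$ virtually embeds into a finitely generated RAAG. Then a finite-index subgroup of $\pi_1(M)$ is biorderable, residually torsion-free nilpotent, and RFRS. These algebraic properties are incompatible with any $\Sol$- or $\Nil$-geometric closed piece of a geometric decomposition, and also with a closed Seifert fibered factor of nonzero rational Euler class (for which Agol's RFRS-to-fibering criterion would produce a fibration obstructed by the Euler number). These are precisely the geometric configurations that obstruct the existence of an NPC metric, so by Leeb's characterization $M$ itself is NPC.

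The principal obstacle is the hyperbolic input to the forward direction: at the time of writing, the Virtual Haken Conjecture for closed hyperbolic 3-manifolds is still open, so the closed hyperbolic case of the conjecture must be taken as conditional on VHC. The second substantive obstacle is the JSJ-gluing step for mixed manifolds, where separately specialized pieces must be promoted to a single special cube complex in a common finite cover. This requires careful control of peripheral structures across the gluing tori, and it is here that the robustness of the graph-manifold cubulation of Theorem~\ref{main-npcGM}, designed to accommodate the combinatorics of arbitrary boundary tori, would be essential in interfacing with the Wise cubulation of the adjacent hyperbolic parts.
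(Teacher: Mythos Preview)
The statement you are attempting to prove is Conjecture~\ref{npcVSp}, which the paper explicitly leaves open; there is no proof in the paper to compare your proposal against. The paper establishes only the graph-manifold case (Theorem~\ref{main-npcGM}), and the footnote records that the remaining cases were settled afterward by Agol's proof of the Virtual Haken Conjecture together with the Przytycki--Wise work on mixed manifolds.

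Your outline is a reasonable sketch of that eventual resolution, and you correctly identify the two genuine obstacles: the closed hyperbolic case is conditional on VHC, and the mixed case requires a combination theorem along JSJ tori. Since you explicitly flag both as unresolved, what you have written is an honest program rather than a proof, and should be presented as such.

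One remark on the reverse direction: this part is in fact already available from the paper together with Leeb's theorem, and your argument can be tightened. If $\pi_1(M)$ virtually embeds in a RAAG then it is virtually RFRS. If $M$ has nonempty boundary or at least one atoroidal JSJ piece, Leeb gives nonpositive curvature directly with no further hypothesis. Otherwise $M$ is a closed graph manifold (possibly Seifert fibered or $\Sol$), and Theorem~\ref{main-npcGM} together with Lemmas~\ref{SFCase} and~\ref{SolCase} handles it. You do not need to argue separately about biorderability or residual torsion-free nilpotence, nor invoke Agol's fibering criterion to rule out nonzero Euler class; the case analysis above is cleaner and complete.
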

	
	We explain our strategy to prove Theorem \ref{main-npcGM}. 
	It suffices to consider the nontrivial case, namely, where
	the graph manifold is not itself Seifert fibered. From known results,
	(cf.~Theorems \ref{Thm-Sv}, \ref{Thm-Ag}), all the statements of Theorem \ref{main-npcGM} imply
	virtual fibering. By a trick of doubling the $3$-manifold along boundary, and by 
	verifying the Anosov-mapping-torus case directly, we shall focus on the
	case that $M$ is the mapping torus $M_\sigma$ of 
	a multitwist $\sigma$ on an oriented closed surface $F$, (Definition \ref{multitwist}). Our plan is to show (1)
	$\Rightarrow$ (2) $\Rightarrow$ (3) $\Rightarrow$ (4) $\Rightarrow$ (1), 
	where only the first and the last implications require our effort.
	
	To show virtual RFRS-ness implies nonpositive curving, we introduce the current equations associated to the multitwist
	$\sigma$. A nondegenerate symmetric solution 
	of this system of equations is virtually
	ensured by the RFRS condition. 
	Such a solution can be pertubed into a nondegenerate symmetric solution of
	certain structural equations regarding to nonpositively curved metrics,
	called the BKN equations.
	This will provide a nonpositively curved metric on $M_\sigma$.
	To show that nonpositive curving implies a virtual special cubulation,
	we apply a technique used by Svetlov to find a virtually embedded surface,
	which virtually guarantees a nondegenerate symmetric solution to the current equations. With such
	a solution in hand, we shall find a finite collection of first cohomology classes,
	whose duals are supposably represented by 
	subsurfaces of $M_\sigma$ appropriate for the Sageev construction. 
	Keeping this in mind, we shall, however, 
	implement a more explicit construction. 
	It turns out the cubical geometry of the cube complex 
	will be so well understood
	that one can nicely recognize its virtual specialness.
	
	In Section \ref{Sec-Prelim}, we provide background materials related to our discussion. In Section \ref{Sec-Eqns},
	we introduce the notion of multitwists and their configuration graphs. We setup
	the current equations and the BKN equations, and show the virtual equivalence of the existence of
	their nondegenerate symmetric solutions. In Section \ref{Sec-SpCubulation}, we construct a special cube complex homotopy
	equivalent to the mapping torus of any multitwist with a bipartite configuration graph.
	In Section \ref{Sec-mainProof}, we combine all the partial results above and provide a proof of Theorem \ref{main-npcGM}.
	
	\bigskip\noindent\textbf{Acknowledgement}. The author thanks Ian Agol and Piotr Przytycki
	for valuable communications, and Stefan Friedl for suggestion on the statement of Theorem
	\ref{main-npcGM}. The author also thanks the referees for proofreading and comments.

\section{Preliminaries}\label{Sec-Prelim}

	In this section, we recall some notions and known results 
	relevant to our discussion. As we are only interested in
	virtual properties in this paper, 
	we shall always work with orientable $3$-manifolds for convenience. 
	We refer to \cite{Ja} for standard terminology and facts in $3$-manifold topology.

	\subsection{Nonpositively curved $3$-manifolds}\label{Subsec-npc}
		A compact $3$-manifold is said to be \emph{nonpositively
		curved} if it supports a complete Riemannian metric in the interior with
		everywhere nonpositve sectional curvature. This is equivalent
		to being virtually nonpositively curved, (\cite{KL}). 
		It is well known that nonpositively curved $3$-manifolds are all
		aspherical. 
		
		Atoroidal aspherical $3$-manifolds are hyperbolic by the Geometrization,
		so these are clearly nonpositively curved.
		A Seifert fibered space is nonpositively curved if and only if
		it is virtually a product of the circle
		and a compact surface other than the sphere, (\cite{GW}). A torus
		bundle with an Anosov monodromy, and hence
		any $\Sol$-geometric $3$-manifold, is not nonpositively curved,
		(\cite{GW}, \cite{Ya}).
		By a result of Bernhard Leeb, any aspherical compact
		$3$-manifold with either nonempty boundary
		or at least one atoroidal JSJ piece is nonpositively curved, 
		(\cite{Le}). He also discovered the first family of nontrivial
		graph manifolds other than the $\Sol$-manifolds
		which are not nonpositively curved. In \cite{BS},
		Sergei Buyalo and Pavel Svetlov provided several characterizations
		of nonpositively curved graph manifolds in terms of certain structure
		equations and operators associated to the JSJ graph,
		(cf.~Subsection \ref{Subsec-BKNEqns}).

	\subsection{Graph manifolds}\label{Subsec-graphMfd}
		A \emph{graph manifold} is an irreducible compact orientable $3$-manifold 
		obtained by gluing up compact Seifert fibered spaces along boundary.
		We say a graph manifold is \emph{nontrivial} 
		if it is not a Seifert fibered space, or in other words,
		if it has a nontrivial JSJ decomposition.
		Recall that for any orientable irreducible compact $3$-manifold, the Jaco-Shalen-Johanson (JSJ) decomposition
		provides a minimal finite collection of essential tori, canonical up to
		isotopy, which cuts the manifold into atoroidal and Seifert fibered {\it pieces}.
		Graph manifolds are precisely compact $3$-manifolds with only Seifert-fibered pieces.
		Note that if a $3$-manifold is virtually
		a torus bundle with an Anosov monodromy, it is a nontrivial graph manifold
		according to our convention,
		but in literature (e.g.~\cite{LW}), some authors exclude this
		case, and use the term
		`nontrivial' in a the sense of having nontrivial JSJ decomposition
		with Seifert-fibered JSJ pieces over hyperbolic bases.
		
		Much has been explored on virtual properties of graph manifolds during the past two decades.
		Here we are most interested in virtual fibering and nonpositive curving. 
		In \cite{LW}, John Luecke and Yingqing Wu exhibited examples of
		graph manifolds that are not 
		virtually fibered over the circle.  There are also graph manifolds that are
		fibered but not nonpositively curved, for example, the mapping torus $M_\sigma$
		of an automorphism $\sigma$ of a closed oriented surface $F$ of nonpositive Euler
		characteristic, where $\sigma$ is the product of
		positive powers of right-hand Dehn twists along a nonempty collection of
		disjoint, mutually non-parallel simple closed curves on $F$, (\cite[Theorem 3.7]{KL}). 
		However, Pavel Svetlov proved:
		
		\begin{theorem}[{\cite{Sv}, cf.~also \cite{WY} for the bounded case}]\label{Thm-Sv} 
			Nonpositively curved graph manifolds virtually fiber over the circle.
		\end{theorem}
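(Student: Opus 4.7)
The plan is to use the JSJ decomposition of $M$ to construct, in a suitable finite cover $\widetilde{M}\to M$, an embedded connected non-separating horizontal surface $F$ (transverse to every Seifert fibration), and then argue that $\widetilde{M}$ splits as a surface bundle $F\to\widetilde{M}\to S^1$. By Stallings' fibering theorem this reduces to producing a primitive class $\phi\in H^1(\widetilde{M};\ZZ)$ whose Poincar\'e dual is $F$ and whose kernel on $\pi_1$ is the image of $\pi_1(F)$; the horizontality of $F$ forces that kernel to be finitely generated.

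First I would invoke the Buyalo--Svetlov characterization of nonpositive curvature via BKN structural equations. These yield, on each side of each JSJ torus $T$, a positive rational slope along which a horizontal surface ought to exit the adjacent Seifert piece, with a linear matching equation across $T$. Positivity together with matching says that piece-by-piece horizontal multi-sections of the Seifert fibrations can be designed with boundary slopes agreeing across every JSJ torus, so that they splice into a globally immersed horizontal surface in $M$.

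Next I would pass to a finite regular cover $\widetilde{M}\to M$ chosen so that each Seifert piece lifts to a trivial bundle $S^1\times\widetilde{\Sigma}_j$ and so that the rational BKN slopes all become integral. In each lifted piece, the integer slopes cut out an embedded horizontal multi-section with prescribed boundary curves on the boundary tori; the matching equation now holds in integral homology, so these multi-sections glue along JSJ tori into an embedded horizontal surface $F\subset\widetilde{M}$. A further cyclic cover dual to $[F]$ then makes $F$ connected and non-separating, producing a primitive class $\phi:=\mathrm{PD}[F]$ in $H^1(\widetilde{M};\ZZ)$.

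The main obstacle, as I see it, is promoting the non-separating horizontal $F$ to an actual fiber of a fibration, equivalently showing that the closed $1$-form representing $\phi$ is nowhere vanishing. Inside each piece $S^1\times\widetilde{\Sigma}_j$ this holds automatically because a horizontal multi-section with positive vertical degree is the fiber of a projection onto $S^1$. The remaining task is to verify that the per-piece product structures glue across JSJ tori into a global nonsingular closed $1$-form, which should follow from strict positivity of the BKN solution on every edge of the JSJ graph, after possibly passing to a further cover that rigidifies the gluing data. Residual finiteness of graph manifold groups, together with subgroup separability of the fiber subgroups of each Seifert piece, supplies each finite cover needed along the way.
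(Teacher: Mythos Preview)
Your overall plan follows Svetlov's approach: use the nondegenerate BKN solution to build a horizontal surface and then promote it to a virtual fiber. But the argument has a genuine gap at the step you treat as routine, and you have misidentified where the real difficulty lies.

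The hard step is not gluing the per-piece fibrations into a global nonsingular closed $1$-form. Once you have an \emph{embedded} two-sided horizontal closed surface $F$ in a graph manifold, cutting along $F$ leaves an $I$-bundle in each Seifert piece, so the complement is an $I$-bundle and $F$ is automatically a fiber. The genuine obstruction is earlier: producing an \emph{embedded} horizontal surface in some finite cover. Your proposed cover, chosen so that ``the rational BKN slopes all become integral'' and each piece becomes a trivial circle bundle, does not accomplish this. Matching slopes on a JSJ torus only matches homology classes of boundary curves; it does not control how many boundary components the horizontal pieces have on each side, nor how they pair off. An immersed horizontal surface with matching slopes can fail to lift to an embedding in any cover obtained this way.

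What is missing is exactly the Rubinstein--Wang separability criterion \cite{RW}, and the part of the BKN system you never invoke: the multiplicative cycle equations $\prod_{\delta\in\widetilde\edge(c)} u_\delta = 1$ (equivalently $\sum_\delta \log u_\delta = 0$ around every cycle). In Svetlov's argument one first builds an \emph{immersed} horizontal surface $S\looparrowright M$ from the vertex equations, and then the cycle equations translate precisely into the Rubinstein--Wang cocycle condition, which certifies that $\pi_1(S)$ is separable in $\pi_1(M)$ and hence that $S$ becomes embedded in a finite cover (see the outline in the proof of Lemma~\ref{virtualFiberSurface} here). Your appeal to residual finiteness of $\pi_1(M)$ and separability of Seifert-fiber subgroups is beside the point: what is needed is separability of the \emph{horizontal surface} subgroup, which is a much more delicate statement and is exactly what \cite{RW} supplies under the cycle condition.
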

		
		Svetlov's proof relies on a criterion of Hyam Rubinstein and Shicheng Wang on 
		the separability of immersed horizontal surfaces in graph manifolds.
		In \cite{BS}, this approach was systematically developed, 
		providing characterizations of various virtual properties of graph manifolds,
		on three levels in terms of homological, geometric, and operator-spectral data, respectively. For
		our purpose the second
		level is particularly interesting, as it
		describes the nonpositively curved metric on the graph manifold,
		via the so-called BKN equations. We shall investigate this system of equations in more details
		in Section \ref{Sec-Eqns}. The paper \cite{BS} is written in an elegant, expository style, 
		so we recommend it for further information of the reader.

	\subsection{Special cube complexes}\label{Subsec-SpCubeCplx}
		Special cube complexes were introduced by
		Fr\'{e}d\'{e}ric Haglund and Daniel Wise (cf.~\cite{HW}) as suitable generalizations of graphs in higher dimensions.
		Heuristically speaking, these are cube complexes in which hyperplanes are `properly embedded in
		properly transversal positions' in certain combinatorial sense.
		
		The building blocks in a cube complex are Euclidean \emph{$n$-cubes}, namely,
		isometric copies of the double-unit cube $[-1,1]^n\subset\RR^n$ for
		each dimension $n\geq0$. For all dimensions $n>0$,
		every $n$-cube contains exactly $n$ \emph{midcubes} which are
		the $(n-1)$-cubes obtained from $[-1,1]^n$ intersecting with each
		coordinate hyperplane through the origin of $\RR^n$, and $2^n$ 
		\emph{corners}, which are the $n$-simplices each spanned by the half-edges
		adjacent to a single vertex.
		
		\begin{definition}\label{cubeCplx} 
			A \emph{cube complex} $X$ is a CW complex whose cells are identified as cubes, 
			such that the attaching maps are combinatorial in the sense that they
			send the faces of cubes isometrically onto lower-dimensinoal cubes. 
			For any vertex $v\in X$, the \emph{link} $\lk(v)$ of $v$ is 
			the complex of simplices over which
			the union of the corners-of-cubes at $v$ 
			can be naturally regarded as a cone.
			
			A \emph{combinatorial map} $f:X\to Y$
			between cube complexes is a cellular map, whose restriction
			to each cube is an isometric homeomorphism
			onto the image.
			It is a (combinatorial) \emph{local isometry}, often written as $f:X\looparrowright Y$, if
			for any vertex $v\in X$, the induced combinatorial map between links 
			$\dot{f}_v:\,\lk(v)\to\lk(f(v))$ is furthermore 
			an embedding whose image is a full subcomplex,
			(i.e. such that a simplex lies in the image whenever all its vertices
			lie in the image). 
		\end{definition}
		
		\begin{remark} 
			A cube complex is sometimes said to be \emph{simple} if all the links
			are simplicial complexes. For square (i.e.~$2$-dimensional cube) complexes,
			this is equivalent to saying that no link contains a cycle of at most two edges.
			Nonpositively curved cube complexes are all simple. A
			\emph{nonpositively curved} cube complex is known as a cube complex whose
			universal covering is $\CAT(0)$ with respect to the naturally induced path
			metric. An equivalent combinatorial characterization is that 
			all the links are flag (i.e.
			such that any finite collection of pairwisely joined 
			vertices spans a unique simplex), (\cite{Gr}). 
		\end{remark}
		
		The collection of all the midcubes in a cube complex $X$ forms a new cube complex whose connected components
		are called \emph{hyperplanes}. A hyperplane $H$ is \emph{two-sided} if it naturally induces
		a local isometry $H\times[-1,1]\looparrowright X$.  
		A hyperplane $H$ is said to be \emph{self-intersecting} if there are two edges dual to (i.e.~across)
		$H$ lying in a common square of $X$; or it is said to be (directly) \emph{self-osculating} if there are
		two dual edges not in a common square but adjacent at a common vertex, whose directions 
		towards the vertex induces the same side of $H$ in an obvious sense. Two distinct
		hyperplanes $H$, $H'$ are said to be \emph{inter-osculating} if they both intersect and osculate, namely, such that
		there are two edges dual to $H$ and $H'$ respectively, which share a common square of $X$, and that there
		are two edges dual to $H$ and $H'$ respectively, which do not share a common square in $X$ but share a common vertex.
		
		\begin{definition}\label{spCubeCplx} 
			A cube complex $X$ is said to be \emph{special} 
			if its hyperplanes are all two-sided, with no self-intersection, no self-osculation,
			nor inter-osculation.
		\end{definition}
		
		\begin{remark}\label{A-special} 
			In earlier literatures, (e.g.~\cite{HW}), 
			this property was sometimes refered to as being `A-special', indicating
			its relation to right-angled \underline{A}rtin groups.
		\end{remark}
		
		Prototype examples of special cube complexes are graphs and the cube complexes
		of right-angled Artin groups, (cf.~Subsection \ref{Subsec-RAAG}). It is also known that every
		special cube complex can be canonically completed by `filling up cubes' 
		into a nonpositively curved special cube complex.

	\subsection{Right-angled Artin groups}\label{Subsec-RAAG}
		Let $\Gamma$ be a (possibly infinite, and possibly locally infinite)
		simplicial graph. Denote the vertex set as $\vrtx(\Gamma)$, and the
		edge set as $\edge(\Gamma)$. The \emph{right-angled Artin group} associated
		to $\Gamma$ is the group:
			$$A(\Gamma)\,=\,\langle x_v\,:\,v\in\vrtx(\Gamma)\,|\, [x_v,x_{v'}]\,:\,\{v,v'\}\in\edge(\Gamma)\rangle.$$
		Free groups and abelian groups are examples of
		right-angled Artin groups, associated to edgeless graphs and
		perfect graphs, respectively.
		
		Right-angled Artin groups are closely related to special cube complexes. For any 
		right-angled Artin group $G$, there is a special 
		nonpositively curved complex $\mathcal{C}(\Gamma)$ by completing the 
		square complex naturally associated to the defining 
		presentation of $A(\Gamma)$. It has a single vertex,
		and the edges of $\mathcal{C}(\Gamma)$ correspond to the generators
		$x_v$'s, and the squares of $\mathcal{C}(\Gamma)$ correspond to the relators $[x_v,x_{v'}]$'s. Moreover,
		Haglund and Wise proved:
		
		\begin{theorem}[{\cite[Theorem 4.2]{HW}, cf.~Remark \ref{A-special}}]\label{Thm-HW} 
			A cube complex is special
			if and only if it admits a local isometry into the
			cube complex of a right-angled Artin group. 
		\end{theorem}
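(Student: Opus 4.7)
The natural approach is to take $\Gamma$ to be the \emph{crossing graph} of hyperplanes of $X$: $\vrtx(\Gamma)$ is the set of hyperplanes of $X$, and $\{H,H'\}\in\edge(\Gamma)$ precisely when $H$ and $H'$ cross inside some square of $X$. Each hyperplane thereby corresponds to a generator of $A(\Gamma)$ and hence to a directed edge of $\mathcal{C}(\Gamma)$. The two implications will be handled separately, with the RAAG complex $\mathcal{C}(\Gamma)$ used as a universal target for specialness.

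To go from a local isometry $f:X\looparrowright\mathcal{C}(\Gamma')$ to specialness of $X$, I would first check directly that $\mathcal{C}(\Gamma')$ is itself special for any simplicial graph $\Gamma'$: each hyperplane is dual to a unique generator, which yields two-sidedness and immediately rules out self-intersection and self-osculation; whenever two generators commute, every pair of adjacent half-edges carrying those labels spans a square at the single vertex, ruling out inter-osculation. Now given $f$, any two edges of $X$ dual to a common hyperplane must map to edges dual to a single hyperplane of $\mathcal{C}(\Gamma')$, since midcubes track connectedness; two-sidedness then transports back by pulling back the orientation. For the three negative properties, the link-embedding clause in the definition of local isometry converts any pathology in $X$ into the identical pathology in $\mathcal{C}(\Gamma')$ at $f(v)$; since $\mathcal{C}(\Gamma')$ has none, neither does $X$.

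The other direction is the main obstacle. Given a special $X$, I would construct $f:X\to\mathcal{C}(\Gamma)$ by collapsing every $0$-cell to the unique vertex $*$ of $\mathcal{C}(\Gamma)$, sending each $1$-cell dual to $H$ to the edge $x_H$ with orientation determined by the two-sidedness of $H$, and extending over each $n$-cube as the labelled $n$-cube in $\mathcal{C}(\Gamma)$ whose generators are the hyperplanes dual to its edges. The absence of self-intersection guarantees that these labels are pairwise distinct, so $f$ is a well-defined combinatorial map. It remains to verify at each vertex $v\in X$ that $\dot f_v:\lk(v)\to\lk(*)$ is injective with full-subcomplex image. Injectivity on vertices of $\lk(v)$ is exactly no self-osculation, since two distinct half-edges at $v$ dual to the same hyperplane with coincident co-orientation would be identified by $\dot f_v$. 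The full-subcomplex condition on edges is exactly no inter-osculation: if two half-edges at $v$ are dual to hyperplanes $H\ne H'$ that cross somewhere in $X$, they must already span a square at $v$. Higher simplices of $\lk(v)$ correspond to collections of pairwise crossing hyperplanes and are handled inductively by the same two properties, yielding the required local isometry and closing the equivalence.
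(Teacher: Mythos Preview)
The paper does not actually prove this theorem: it is quoted as \cite[Theorem~4.2]{HW}, and the only content the paper adds is the sentence following the statement, which names the crossing graph of hyperplanes as the simplicial graph $\Gamma$ one associates to a special cube complex. Your proposal is exactly this standard Haglund--Wise construction, so there is nothing to compare against in the paper beyond that one-line hint, with which you agree.

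One comment on the sketch itself. The final clause, that ``higher simplices \dots\ are handled inductively by the same two properties'', hides the only genuinely nontrivial step. The full-subcomplex condition demands that whenever half-edges $e_1,\dots,e_k$ at $v$ map to vertices spanning a $(k{-}1)$-simplex in $\lk(*)$ (equivalently, their dual hyperplanes pairwise cross), there is already a $k$-cube at $v$ containing them. No inter-osculation gives you the pairwise squares, but it does not by itself assemble them into a $k$-cube when $k\geq 3$; this is precisely where flagness of the target $\mathcal{C}(\Gamma)$ interacts with the cube-completion of $X$ alluded to just after Remark~\ref{A-special}. In Haglund--Wise this is handled carefully, so your outline is correct in architecture, but that last sentence is doing more work than ``the same two properties'' suggests.
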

		
		In fact, the right-angled Artin group associate to a special cube complex $X$ can be canonically
		constructed by taking the simplicial graph whose vertices are hyperplanes in $X$, such that
		two vertices are joined by an edge if and only if the corresponding hyperplanes intersect
		each other. Note Theorem \ref{Thm-HW} does not assume the cube complex to be finite dimensional,
		nor the right-angled Artin group to be finitely generated.
		
		Finitely generated right-angled Artin groups are known to be subgroups of finitely generated right-angled Coxeter groups, 
		which are subgroups of ${\rm SL}(n,\ZZ)$ for some integer $n>0$, (cf.~\cite{DJ}, \cite{HsW}).

	\subsection{Agol's RFRS condition}\label{Subsec-RFRS}
		In \cite{Ag}, Ian Agol gave a criterion for $3$-manifolds to fiber over the circle in terms
		of a residual-type condition on the fundamental groups.
		
		\begin{definition}\label{RFRS} 
			A group $G$ is said to be \emph{residually finite
			rationally solvable} (or \emph{RFRS}) if there is a sequence of finite index subgroups
			$G=G_0\triangleright G_1\triangleright\cdots$ such that $\cap_{i\geq 0} G_i=\{1\}$, and that
			the quotient homomorphism $G_i\to G_i\,/\,G_{i+1}$ factors through a free abelian group
			for every integer $i\geq0$.
		\end{definition}
	
		Examples of virtually RFRS groups are surface groups, reflection groups, 
		and finitely generated right-angled Artin groups, following from the more general fact
		that finitely generated right-angled Coxeter groups are virtually RFRS (\cite[Theorem 2.2]{Ag}). 
		
		\begin{theorem}[{Cf.~\cite[Theorem 5.1]{Ag}}]\label{Thm-Ag}
			Let $M$ be an orientable compact irreducible $3$-manifold of zero Euler characteristic. If
			$\pi_1(M)$ is virtually RFRS, then $M$ virtually fibers over the circle. 
		\end{theorem}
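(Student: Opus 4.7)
The plan is to combine Stallings' fibration criterion with Gabai's sutured-manifold hierarchy, climbing the RFRS tower to progressively resolve the obstruction to fibering.

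First, since $\pi_1(M)$ is virtually RFRS and non-trivial, Definition \ref{RFRS} yields a finite-index subgroup surjecting onto a non-trivial free abelian group; after replacing $M$ by the associated finite cover, I may assume $b_1(M)>0$ and fix a non-zero class $\phi\in H^1(M;\ZZ)$.

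Second, I would set up the framework via the Thurston norm and taut sutured decompositions. By Stallings' theorem, $\phi$ is represented by a fibration over $S^1$ exactly when $\ker(\phi_*:\pi_1(M)\to\ZZ)$ is finitely generated; by Thurston, this happens iff $\phi$ lies in the open cone over a top-dimensional \emph{fibered face} of the unit ball of the Thurston norm on $H^1(M;\RR)$. For a general non-zero $\phi$, a Thurston-norm-minimizing oriented dual surface $S$ cuts $M$ into a taut sutured manifold $(N,\gamma)$, and fiberedness is equivalent to $(N,\gamma)$ being the product $S\times[0,1]$. When $(N,\gamma)$ is not a product, Gabai's theorem provides a finite taut sutured-manifold hierarchy terminating in $3$-balls, whose depth furnishes a complexity measuring the failure of $\phi$ to fiber.

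Third --- the heart of the argument --- I would show that passing up the RFRS tower strictly reduces this complexity. Given the RFRS sequence $\pi_1(M)=G_0\triangleright G_1\triangleright\cdots$, let $M_1\to M$ be the cover associated to $G_1$ and $\tilde\phi$ the pullback of $\phi$. Because $G_0/G_1$ factors through a free abelian group, new classes $\psi\in H^1(M_1;\ZZ)$ vanishing on $G_1$ become available, and one arranges $\psi$ to be non-trivial on the sutures of the pulled-back decomposition. The critical observation is that for a sufficiently small rational $\epsilon>0$, the perturbed class $\tilde\phi+\epsilon\psi$ lies in the closed cone over the Thurston-norm face containing $\tilde\phi$; hence a norm-minimizing dual surface for $\tilde\phi+\epsilon\psi$ can be chosen compatibly with the hierarchy of $\tilde\phi$ while strictly descending one stage deeper. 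Iterating along the RFRS tower, finiteness of Gabai's hierarchy guarantees that after finitely many cover passages the (perturbed) pullback becomes fibered.

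The principal obstacle is the third step: producing, at each stage, a class $\psi$ in the RFRS cover that is simultaneously non-trivial on the problematic sutures (so that the perturbed norm-minimizer genuinely refines the hierarchy rather than merely reparametrizing it) and such that the perturbation stays in the closure of a single norm-face. This rests delicately on Thurston's openness of the fibered cone together with the algebraic fact that the RFRS quotients land in free abelian groups, which supplies enough linearly independent classes to eventually detect every suture.
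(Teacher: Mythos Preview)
The paper does not prove Theorem~\ref{Thm-Ag}; it is quoted as a result of Agol \cite{Ag}, and the only ``proof'' the paper offers is the one-sentence gloss immediately following the statement: pass to finite cyclic covers dual to nonseparating subsurfaces and use Gabai's sutured-manifold hierarchy to eliminate the nonproduct parts in finitely many steps. Your sketch is a faithful and more detailed rendering of exactly that strategy (which is indeed Agol's original argument): Thurston-norm-minimizing surfaces, fibered faces, Gabai's hierarchy as a complexity, and perturbation by new cohomology classes coming from the RFRS tower. So your proposal is correct and matches the paper's (cited) approach; there is nothing to contrast, since the paper supplies no independent proof beyond that informal description.
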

			
		Roughly speaking, 
		the RFRS assumption allows one to pass to a suitable sequence of finite cyclic coverings
		by cutting and pasting the $3$-manifold along nonseparating subsurfaces. This
		process can be shown to destroy all the nonproduct parts within a finite
		number of steps,
		using Gabai's sutured
		manifold hierarchy. With his criterion, Agol showed that
		reflection $3$-orbifolds and arithmetic hyperbolic $3$-manifolds
		defined by quadratic forms virtually fiber. His criterion is also an ingredient
		in Wise's proof of the Virtual Fibering Conjecture 
		for closed Haken hyperbolic $3$-manifolds.

\section{Current equations versus BKN equations}\label{Sec-Eqns}
	In this section, we introduce the system of the current equations and the system
	of the BKN equations associated to
	multitwists. A nondegenerate symmetric solution of the current equations reflects the virtual
	RFRS-ness of the mapping torus, (Lemma \ref{survivalCriterion});
	and a nondegenerate symmetric solution of the BKN equations reflects the
	nonpositively-curving property, (Theorem \ref{Thm-BS}). We show that
	the existence of such a solution for any one of these two systems virtually implies that
	for the other, (Proposition \ref{nsSolutions}).

	\subsection{Mapping tori of multitwists}\label{Subsec-multitwist}
		Let $F$ be an oriented closed surface of negative Euler characteristic. For any 
		essential simple closed curve $z$ on $F$, we denote the (right-hand) Dehn twist 
		on $F$ along $z$ as:
			$$D_z:F\to F.$$ 
		Note it does not depend on the direction
		of $z$. It induces 
		the automorphism of $H_1(F)$ defined by
		$D_{z*}(\alpha)\,=\,\alpha\,+\,I([z],\alpha)\,[z]$
		for any $\alpha\in H_1(F)$, where $I:H_1(F)\times H_1(F)\to\ZZ$ denotes the intersection form.
		
		\begin{definition}\label{multitwist} 
			Let $F$ be an oriented closed surface of negative Euler characteristic,
			and $z_1,\cdots,z_s$ be a collection of mutually disjoint, mutually non-parallel essential 
			simple closed curves, and $b_1,\cdots, b_s$ be nonzero integers. The \emph{multitwist} $\sigma$
			on $F$ along these curves with multiplicity
			$b_i$'s is the automorphism (i.e.~orientation-preserving
			self-homeomorphism) defined by $\sigma=D_{z_1}^{b_1}\cdots D_{z_s}^{b_s}$.
			We often more specifically denote a multitwist by the pair $(F,\sigma)$.
		\end{definition}
		
		The JSJ decomposition of the mapping torus $M_\sigma$ of a multitwist $(F,\sigma)$ is fairly easy to describe. 
		Recall that for an oriented closed surface $F$ with an
		automorphism $\sigma$, the mapping torus $M_\sigma$ associated
		to the pair $(F,\sigma)$ is the closed oriented $3$-manifold obtained from $F\times[0,1]$ identifying
		$(x,0)$ with $(\sigma(x),1)$ for every $x\in F$. 
		With the notations above, a JSJ torus of $M_\sigma$ is the suspension of some $z_i$, namely, the image
		of $z_i\times[0,1]$ in $M_\sigma$; and a JSJ piece is the suspension of some component of $F-z_1\cup\cdots\cup z_s$ (i.e.~the compact surface
		obtained from $F$ by removing open regular neighborhoods
		of $z_i$'s). 
		We shall usually denote the union of JSJ tori of $M_\sigma$ as $\tori$, and the disjoint union
		of (compact) JSJ pieces as $\pieces$. 
		There is a naturally induced graph (i.e.~a CW $1$-complex):
			$$\cfggraph=\cfggraph(\pieces,\tori),$$ 
		dual to the decomposition,
		called the \emph{JSJ graph}, whose vertex 
		set $\vrtx(\cfggraph)$ labels to the JSJ pieces, and whose edge set $\edge(\cfggraph)$ labels to the JSJ tori.
		For any JSJ piece $J_v\subset\pieces$ corresponding to a vertex $v\in\vrtx(\cfggraph)$,
		each component of $\partial J$ is a torus $T_\delta$'s labelled by an end-of-edge $\delta$ adjacent to $v$.
		We often denote the set of ends-of-edges as $\widetilde\edge(\cfggraph)$, which may be regarded as a two-fold
		covering of $\edge(\cfggraph)$. For any end-of-edge $\delta$, there is a natural orientation-reversing homeomorphism:
			$$\phi_\delta:T_\delta\to T_{\bar\delta},$$
		where $\bar\delta$ is the opposite end of the edge which $\delta$ belongs to, satisfying
		$\phi_{\bar\delta}=\phi_\delta^{-1}$. These $\phi_\delta$'s together defines an orientation-reversing involution:
			$$\phi:\partial\pieces\to\partial\pieces,$$
		often referred to as the \emph{gluing}. In view of this, we also
		re-index the curves $z_i$'s and the multiplicity $b_i$'s by $\edge(\cfggraph)$. In the rest of this paper,
		it will be convenient to
		write $b_\delta$ for $b_e$ whenever $\delta$ is an end of $e$. Thus:
			$$b_\delta=b_{\bar\delta}.$$
		We also index the components of $F-\cup_{e\in\edge(\cfggraph)}\,z_e$ by $\vrtx(\cfggraph)$, writing
		$F_v$ for the component indexed by $v\in\vrtx(\cfggraph)$.
		
		We may integrate the defining information for a multitwist alternatively as follows.
		By a \emph{cycle} $c$ of $\cfggraph$, we mean a sequence of mutually distinct, consecutive
		directed edges $\vec{e}_1,\cdots,\vec{e}_r$ such that the terminal endpoint of $\vec{e}_r$
		coincides with the initial endpoint of $\vec{e}_1$. 
		A graph $\cfggraph$ is said to be \emph{bipartite} if every cycle of $\cfggraph$ consists of an even number
		of edges.
		
		\begin{definition} 
			Let $(F,\sigma)$ be a multitwist. 
			The \emph{configuration graph} associated to $(F,\sigma)$ is the triple:
				$$(\cfggraph,\{\chi_v\},\{b_e\}),$$
			where $\cfggraph$ is the JSJ graph of $M_\sigma$, and the integer
			$\chi_v<0$ is the Euler characteristic of $F_v$
			for each vertex $v\in\vrtx(\cfggraph)$, and the integer $b_e\neq 0$ is the Dehn-twist multiplicity
			along the curve $z_e$ for each edge $e\in\edge(\cfggraph)$. A \emph{bipartite} configuration
			graph is such that $\cfggraph$ is bipartite.
		\end{definition}
		
		\begin{remark} 
			It is clear that the positive integer
			$-\chi_v$ is greater than the valence of $v$ minus one. Given a 
			graph decorated with nonzero integers satisfying this compatibility, 
			one may canonically recover a multitwist
			$(F,\sigma)$. However, it turns out that $\chi_v$'s are irrelevant to 
			the virtual properties of $M_\phi$ listed in Theorem \ref{main-npcGM}.
		\end{remark}
		
		Every JSJ piece $J_v$ of $M_\sigma$ has a natural product structure
		$F_v\times S^1$, where $F_v\times *$ are parallel to
		the copy $F_v\times\{0\}$, and where $*\times S^1$ are the Seifert fibers directed forward along the suspension flow. 
		Thus every (preglue) JSJ torus $T_\delta\subset\partial J_v$ has a canonical \emph{Waldhausen basis}, namely: 
			$$(f_\delta,\,z_\delta),$$ 
		where $f_\delta$ is the directed Seifert fiber, and $z_\delta\subset\partial F_v$ 
		is component with the induced orientation. In other words, $[f_\delta]$ and $[z_\delta]$ forms a basis of $H_1(T_\delta)$
		with the intersection number $I([f_\delta],\,[z_\delta])=1$.
		By a straightforward computation, $\phi_\delta*:H_1(T_\delta)\to H_1(T_{\bar\delta})$ is given by the formula:
			$$\begin{array}{ccccc}
				\phi_{\delta*}([f_\delta])	&=	&	[f_{\bar\delta}]&+	&b_\delta\,[z_{\bar\delta}],\\
				\phi_{\delta*}([z_\delta])	&=	&                   &	&-\,[z_{\bar\delta}].
			\end{array}$$
		In particular,  $b_\delta$ equals the intersection number between Seifert fibers from the two sides of $T_\delta$, namely,
		$b_\delta\,=\,I([f_\delta],\,\phi_{\bar\delta*}[f_{\bar\delta}])$ on $H_1(T_\delta)$. In fact, the latter is the 
		usual definition of $b_\delta$ (or $b_e$) for a graph manifold in general, (cf.~\cite[Subsection 1.3]{BS}).
		
		For every vertex $v\in\vrtx(\cfggraph)$, the \emph{charge} at $v$ is known as the rational number:
			$$k_v=\sum_{\delta\in\widetilde\edge(v)}\,\frac{1}{b_\delta}.$$
		It is the Euler number of the framing on $\partial J_v$ provided by the Seifert fibers from the adjacent pieces, with respect
		to its own Seifert fibration, or more precisely, the unique $k_v$ satisfying 
		$\sum_{\delta\in\widetilde\edge(v)}\,\frac{\phi_{\bar\delta*}[f_{\bar\delta}]}{b_\delta}
		=k_v\cdot [f_v]$ in $H_1(J_v;\QQ)$, with our notations. The charge at JSJ pieces can be defined for general graph manifolds, but unlike
		for mapping tori of multitwists, it is not always determined by the intersection numbers $b_e$'s between fibers from both sides on the JSJ tori,
		(cf.~\cite[Subsection 1.3]{BS}).
		
		We end up this subsection by pointing out a simplification of
		our situation. We say a multitwist (or more generally,
		a surface aumorphism) $(\tilde{F},\tilde\sigma)$ \emph{covers}
		$(F,\sigma)$ if there is a covering map $\kappa:\tilde{F}\to F$ such that the diagram below commutes:
			$$\begin{CD}
				\tilde{F}@>\tilde{\sigma}>>\tilde{F}\\
				@V\kappa VV @V\kappa VV\\
				F@>\sigma>>F.
			\end{CD}$$
		Hence the mapping torus $M_{\tilde{\sigma}}$ covers $M_\sigma$ if 
		$(\tilde{F},\tilde\sigma)$ covers $(F,\sigma)$.
		
		\begin{lemma}\label{mappingTorus} 
			Every closed, virtually fibered, nontrivial graph manifold is 
			finitely covered by either the mapping torus $M_\sigma$ of an Anosov automorphism $(T^2,\sigma)$
			of the torus, or the mapping torus $M_\sigma$ of
			a multitwist $(F,\sigma)$ with a bipartite configuration graph.
		\end{lemma}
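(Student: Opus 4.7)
The plan is to build in three stages a finite cover of $M$ of the desired form: successively pass to a fibered cover, a cover with multitwist monodromy, and a cover with bipartite configuration graph. The last stage is where the genuine work lies.

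First, virtual fiberedness gives a finite cover $M_\sigma\to M$ that is the mapping torus of an automorphism $\sigma\colon F\to F$ of a closed oriented surface $F$. Being a nontrivial (i.e.\ non-Seifert) graph manifold is preserved under finite covers, so $M_\sigma$ is also a nontrivial graph manifold; in particular $\chi(F)\le 0$ and $F\neq S^2$. If $F=T^2$, the Nielsen--Thurston trichotomy classifies $\sigma$ as periodic, reducible, or Anosov, and the first two cases put Euclidean or Nil geometry on $M_\sigma$, both Seifert-fibered, contradicting non-triviality. Hence $\sigma$ is Anosov, yielding the first alternative.

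Now suppose $\chi(F)<0$. The Nielsen--Thurston canonical reduction decomposes $\sigma$ along a multicurve into periodic and pseudo-Anosov components on complementary pieces; a pseudo-Anosov component would produce a hyperbolic JSJ piece in $M_\sigma$, violating the graph manifold assumption, so all components must be periodic. Replacing $\sigma$ by a suitable power $\sigma^N$ (and $M_\sigma$ by the corresponding $N$-fold cyclic cover) makes $\sigma^N$ the identity on every complementary piece, so $\sigma^N$ is a multitwist along the subcollection of reducing curves with nonzero accumulated twist.

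It remains to show that the configuration graph $\cfggraph$ of $\sigma^N$ can be made bipartite after at most one further double cover. A direct check shows $\cfggraph$ is bipartite if and only if $\sum_e[z_e]=0$ in $H_1(F;\ZZ/2)$: a $\ZZ/2$-valued $2$-chain $X=\sum_v\chi(v)\,F_v$ has boundary $\sum_e(\chi(v_1(e))+\chi(v_2(e)))\,z_e$, which equals $\sum_e z_e$ precisely when $\chi$ is a proper $2$-coloring of $\vrtx(\cfggraph)$ (loop edges in $\cfggraph$ contribute zero to such a boundary, simultaneously obstructing bipartiteness and the vanishing of $\sum_e[z_e]$). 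Assuming $\cfggraph$ is non-bipartite, the Poincar\'e dual $\alpha\in H^1(F;\ZZ/2)$ of $\sum_e[z_e]$ is nonzero and defines a connected double cover $\tilde F\to F$. Mutual disjointness of the $z_e$'s forces $\alpha([z_e])=0$ for every $e$, so the Dehn-twist formula gives $(\sigma^N)^*\alpha=\alpha$, and $\sigma^N$ lifts to a multitwist $\tilde\sigma\colon\tilde F\to\tilde F$. The mapping torus $M_{\tilde\sigma}$ doubly covers $M_{\sigma^N}$, and tracing the covering identifies its configuration graph with the bipartite double cover of $\cfggraph$, completing the construction. The main obstacle throughout is this last step, where the key is that $\alpha$ is $\sigma^N$-invariant thanks to the disjointness of the JSJ curves.
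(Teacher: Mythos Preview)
Your proof is correct and follows essentially the same route as the paper's: reduce to a multitwist via Nielsen--Thurston plus passing to a power, then take a double cover of $F$ to make the configuration graph bipartite. The double covers coincide: the paper pulls back the bipartite double cover $\tilde\cfggraph\to\cfggraph$ along the collapse $r\colon F\to\cfggraph$, while you build it from the class $\alpha=\mathrm{PD}(\sum_e[z_e])\in H^1(F;\ZZ/2)$, and one checks $\alpha=r^*p$ where $p\in H^1(\cfggraph;\ZZ/2)$ is the edge-parity class. Your homological characterization ``$\cfggraph$ bipartite $\Leftrightarrow\sum_e[z_e]=0$ in $H_1(F;\ZZ/2)$'' is a pleasant explicit criterion, and by observing directly that $(\sigma^N)^*\alpha=\alpha$ (from $\alpha([z_e])=0$) you avoid the paper's extra step of squaring $\sigma$ to make it act trivially on all of $H_1(F;\ZZ/2)$. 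The final identification of the configuration graph of $(\tilde F,\tilde\sigma)$ with the bipartite double cover $\tilde\cfggraph$ could be stated more explicitly (each $F_v$ and each $z_e$ lifts trivially since $\alpha|_{F_v}=0$ and $\alpha([z_e])=0$, so the combinatorics upstairs is precisely that of $\tilde\cfggraph$), but the claim is correct.
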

		
		\begin{proof} 
			If the closed graph manifold is virtually a torus bundle over the circle,
			it is a nontrivial graph manifold if and only if the monodromy is Anosov.
			If the virtual fiber has negative Euler characteristic,
			by the Nielsen-Thurston classification of surface automorphisms (\cite{Th-NT}),
			a closed, virtually fibered, nontrivial graph manifold is finitely covered by the mapping torus $M_\sigma$ of a 
			nontrivial multitwist $\sigma=D^{b_1}_{z_1}\cdots D^{b_s}_{z_s}$ 
			of an oriented closed surface $F$ of negative Euler characteristic. To ensure
			the bipartition, let $H_1(\cfggraph)\to\ZZ_2$ be the homomorphism
			defined by the parity counting of edges in cycles of $\cfggraph$. The kernel of $\pi_1(\cfggraph)\to H_1(\cfggraph)\to\ZZ_2$
			corresponds to a $2$-fold covering $\tilde\cfggraph$ of $\cfggraph$, which furthermore induces a $2$-fold covering
			$\kappa:\tilde{F}\to F$. Note $\sigma'=\sigma^2$ 
			acts trivially on $H_1(F;\ZZ_2)$ as any $D^2_{z_i}$ does. 
			We may lift each $D^{2b_i}_{z_i}$
			to a multitwist on $\tilde{F}$ along preimages of $z_i$, and hence obtain
			a lift $\tilde\sigma'$ of $\sigma'$ such that $\tilde\sigma'\circ\kappa=\kappa\circ\sigma'$.
			Note that $\tilde\cfggraph$ is bipartite, carrying the configurations of $(\tilde{F},\tilde\sigma')$,
			the pair $(\tilde{F},\tilde\sigma')$ provides a mapping torus as desired.
		\end{proof}

	\subsection{The current equations}\label{Subsec-currentEqns} 
		Let $(F,\sigma)$ be a multitwist with
		a bipartite configuration graph. In this subsection, we introduce a system of
		\emph{current equations} associated to $(F,\sigma)$,
		whose nondegenerate symmetric solutions give rise to 
		nonseparating subsurfaces of $M_\sigma$ intersecting each $z_e$ 
		algebraically nontrivially.
		
		Remember a cycle $c$ of $\cfggraph$ is a combinatorial closed path
		formed by a finite number of consecutive directed edges $\vec{e}_1,\cdots,\vec{e}_r$.
		For any cycle $c$, the set of the initial ends of all the $\vec{e}_i$'s will be denoted
		as $\widetilde\edge(c)$.
		
		\begin{definition}\label{currentEqns} 
			Let $(F,\sigma)$ be a multitwist 
			with a bipartite configuration graph $(\cfggraph,\{\chi_v\},b_v)$.
			The system of the \emph{current equations} associated to $(F,\sigma)$, with the unknowns $x_\delta\in\RR$ 
			where $\delta\in\widetilde\edge(\cfggraph)$,
			consists of the following:
			\begin{itemize}
				\item For any vertex $v$ of $\cfggraph$, 
					$$\sum_{\delta\in\widetilde\edge(v)}\,x_\delta\,=\,0;$$
				\item For any cycle $c$ of $\cfggraph$,
					$$\sum_{\delta\in\widetilde\edge(c)}\,b_\delta x_\delta\,=\,0.$$
			\end{itemize}
			A solution $\{x_\delta\}$ of the current equations is said to be \emph{symmetric} if $x_\delta=-x_{\bar\delta}$ for
			each end-of-edge $\delta$, and to be \emph{nondegenerate} if no $x_\delta$ equals zero.
		\end{definition}
		
		\begin{remark} 
			As the equations are linear with integral coefficients, 
			there is a nondegenerate symmetric solution over $\RR$ if and only if
			there is such a solution over $\QQ$. Somewhat coincidentally,
			this system of equations is analogous to the Kirchhoff's circuit laws in physics if one
			allows `negative resistence', which explains its name.
		\end{remark}
		
		\begin{lemma}\label{survivalCriterion} 
			Let $(F,\sigma)$ be a multitwist 
			with a bipartite configuration graph. Then
			the current equations associated to $(F,\sigma)$ have a nondegenerate symmetric solution
			if and only if for each edge $e\in\edge(\cfggraph)$, the simple closed curve $z_e\subset F$
			is nontrivial in $H_1(M_\sigma;\QQ)$ under the natural inclusion of $F$ 
			into the mapping torus $M_\sigma$.
		\end{lemma}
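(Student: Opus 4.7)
The plan is to build a dictionary between nondegenerate symmetric solutions of the current equations and cohomology classes $\eta \in H^1(M_\sigma;\QQ)$ pairing nontrivially with every $[z_e]$. The condition in the lemma is equivalent to such an $\eta$ existing: individually, nontriviality of $[z_e]$ in $H_1(M_\sigma;\QQ)$ gives a class $\eta_e$ with $\eta_e([z_e]) \neq 0$, and a generic $\QQ$-linear combination $\eta = \sum_e c_e\,\eta_e$ pairs nontrivially with all $[z_e]$ simultaneously, since the bad coefficient vectors lie in a finite union of rational hyperplanes.

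To set up the dictionary, I restrict to a single JSJ piece $J_v = F_v \times S^1$, where $H^1(J_v;\QQ) \cong H^1(F_v;\QQ) \oplus \QQ[f_v^*]$ splits into a horizontal part $\eta_v^h$ and a fiber coefficient $y_v$. Given a global $\eta$, set $x_\delta := \eta([z_\delta])$; the symmetry $x_\delta = -x_{\bar\delta}$ is automatic from $\phi_{\delta*}[z_\delta] = -[z_{\bar\delta}]$. The vertex equation at $v$ reflects the relation $\sum_{\delta \in \widetilde\edge(v)} [z_\delta] = \partial[F_v] = 0$ in $H_1(J_v;\QQ)$ and is precisely the obstruction for the boundary values $(x_\delta)$ to lift to a class $\eta_v^h \in H^1(F_v;\QQ)$ via the restriction $H^1(F_v;\QQ)\to H^1(\partial F_v;\QQ)$ on a compact surface with boundary. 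Using the Waldhausen formula $\phi_{\delta*}[f_\delta] = [f_{\bar\delta}] + b_\delta[z_{\bar\delta}]$, compatibility of $\eta_v$ and $\eta_{v'}$ across a glued torus $T_e$ forces the telescoping relation $y_v - y_{v'} = -b_\delta x_\delta$. Summing this around a cycle $c$ of $\cfggraph$ produces exactly the cycle equation $\sum_{\delta\in\widetilde\edge(c)} b_\delta x_\delta = 0$.

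With the dictionary in place, the ``only if'' direction is the passage from the witness $\eta$ constructed in the first paragraph to $x_\delta = \eta([z_\delta])$. For the ``if'' direction, starting from a nondegenerate symmetric solution, the cycle equations allow me to define consistent $y_v$'s along a spanning tree of $\cfggraph$, the vertex equations allow me to choose $\eta_v^h \in H^1(F_v;\QQ)$ realizing the prescribed boundary values $x_\delta$, and the assembled classes $\eta_v = \eta_v^h + y_v[f_v^*]$ agree on every glued torus by construction, so they glue via Mayer--Vietoris (or equivalently, via representing maps to $S^1$) into a global $\eta \in H^1(M_\sigma;\QQ)$ with $\eta([z_e]) = x_\delta \neq 0$ for every $e$. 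The main delicacy is tracking the signs and orientations in the Waldhausen basis on glued tori so that the consistency of the fiber coefficients matches the stated form of the cycle equations; once that bookkeeping is settled, the remainder is a routine graph-of-spaces assembly.
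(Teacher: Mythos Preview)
Your argument is correct. Both you and the paper identify nondegenerate symmetric solutions with (co)homology classes on $M_\sigma$ pairing nontrivially with every $[z_e]$, and both use the same genericity step to produce such a class once each $[z_e]$ is individually nontrivial. The difference is in the decomposition used to build the dictionary. The paper exploits the mapping-torus structure: it works with a class $[s]\in H_1(F;\QQ)$ on the fiber, interprets the vertex equations as the condition that the prescribed intersection numbers with the $z_e$'s come from some $[s]$, and interprets the cycle equations as $\sigma_*$-invariance of $[s]$ via the multitwist formula $\sigma_*[s]=[s]+\sum_e b_e\,I([z_e],[s])\,[z_e]$; the invariant $[s]$ then suspends to $[S]\in H_2(M_\sigma;\QQ)$. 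You instead exploit the JSJ graph-of-spaces structure, splitting a putative $\eta\in H^1(M_\sigma;\QQ)$ on each $J_v\cong F_v\times S^1$ into a horizontal part and a fiber coefficient $y_v$, so that the vertex equations become the lifting obstruction $H^1(F_v)\to H^1(\partial F_v)$ and the cycle equations become the cocycle condition $y_{v(\delta)}-y_{v(\bar\delta)}=-b_\delta x_\delta$ needed to glue across JSJ tori. The two pictures are Poincar\'e dual; the paper's is shorter here because the monodromy formula packages the gluing in one line, while yours makes the JSJ bookkeeping explicit and in particular never invokes the bipartite hypothesis (which in the paper is only used to orient the $z_e$'s coherently).
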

		
		\begin{proof} 
			Because $\cfggraph$ is bipartite, we may pick a bicoloring $\varepsilon:\vrtx(\cfggraph)\to\{\pm1\}$,
			so that vertices of the same color have no edge between each other. This induces directions of the edges, say
			toward positive vertices, so $z_e$'s are also oriented.
			As every vertex is adjacent to either only incoming edges or only outgoing edges,
			a symmetric solution of the current equations at vertices exists if and only if there is
			a homology class $[s]\in H_1(F;\QQ)$ with the intersection number
			$I([s],\,[z_e])\,=\,x_{e}$, where
			$x_{e}$ is the $x_\delta$ for the initial end $\delta$ of $e$. 
			When the current equations around cycles are also fulfilled, 
			$\sigma_*[s]=[s]+\sum_{e\in\edge(\cfggraph)}\,b_ex_e\,[z_e]$ on $H_1(F;\QQ)$.
			This equals $[s]$ because the second term vanishes as it evalutes zero on every cycle.
			Thus we may suspend $[s]\in H_1(F;\QQ)$ to be a class $[S]\in H_2(M_\sigma;\QQ)$, so that
			the intersection pairing on $H_*(M_\sigma;\QQ)$ gives the intersection
			number $I([S],\,[z_e])\,=\,x_e$.
			Therefore, the existence of a symmetric solution is equivalent to the existence of
			$[S]$ which yields $x_e$'s under pairing with $[z_e]$'s, and the solution is
			nondegenerate if and only if $[z_e]$'s are nontrivial in $H_1(M_\sigma;\QQ)$.
		\end{proof}
		
		An immediate consequence is the following:
		
		\begin{lemma}\label{RFRSnsSol}
			Let $(F,\sigma)$ be a multitwist 
			with a bipartite configuration graph.
			If $M_\sigma$ is virtually RFRS, then for some positive integer $m$,
			and for some multitwist
			$(\tilde{F},\tilde{\sigma}^m)$ finitely covering the $m$-th power $(F,\sigma^m)$, 
			the current equations associated
			to $(\tilde{F},\tilde{\sigma}^m)$ admit a nondegenerate symmetric solution.
		\end{lemma}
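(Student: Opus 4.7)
The plan is to combine Lemma \ref{survivalCriterion} with the RFRS hypothesis: I will produce a multitwist cover $(\tilde F,\tilde\sigma^m)$ of some $(F,\sigma^m)$ in which every simple closed curve $\tilde z_{e,j}\subset\tilde F$ lying over some $z_e$ is nontrivial in $H_1(M_{\tilde\sigma^m};\QQ)$. Rational non-triviality of a curve class persists under further finite covers via the transfer, so it suffices to realize non-triviality for the finitely many $z_e$'s simultaneously in a single cover.

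I first arrange that $\pi_1(M_\sigma)$ is itself RFRS. Fix a finite-index RFRS subgroup and replace it by its normal core in $\pi_1(M_\sigma)$, which remains RFRS since subgroups of RFRS groups are RFRS. Using the semidirect product decomposition $\pi_1(M_\sigma)=\pi_1(F)\rtimes\langle\sigma\rangle$, this normal core intersects $\pi_1(F)$ in a $\sigma$-invariant finite-index subgroup $K$, so the normal core is identified with the fundamental group of a cover $M_{\sigma_0}\to M_\sigma$ with $\sigma_0$ a lift of some $\sigma^{m_0}$ to the cover of $F$ associated to $K$. Possibly after enlarging $m_0$, this lift is itself a multitwist along the preimages of the $z_e$'s. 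Renaming, I may assume $\pi_1(M_\sigma)$ is RFRS with filtration $G_0\supset G_1\supset\cdots$ and $\bigcap_iG_i=\{1\}$.

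For each edge $e\in\edge(\cfggraph)$, the curve $z_e$ is essential in $F$ and hence nontrivial in $\pi_1(M_\sigma)$; let $i_e$ be the smallest integer with $z_e\notin G_{i_e}$. Then $z_e\in G_{i_e-1}$ and the homomorphism $G_{i_e-1}\to G_{i_e-1}/G_{i_e}$ factors through a free abelian group in which $z_e$ has nonzero image, yielding a homomorphism $\phi_e:G_{i_e-1}\to\QQ$ with $\phi_e(z_e)\neq 0$. Let $H=\bigcap_eG_{i_e-1}$, let $N$ be its normal core in $\pi_1(M_\sigma)$, and apply the normal-core construction from the previous paragraph to identify $N=\pi_1(M_{\tilde\sigma^m})$ for a multitwist $(\tilde F,\tilde\sigma^m)$ covering $(F,\sigma^m)$; its configuration graph is bipartite as a covering graph of $\cfggraph$. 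For each $\tilde z_{e,j}\subset\tilde F$ lying over $z_e$, the class $[\tilde z_{e,j}]\in\pi_1(M_{\tilde\sigma^m})\leq\pi_1(M_\sigma)$ is conjugate to $z_e^{d_{e,j}}$ for the covering degree $d_{e,j}\geq 1$, and the restriction of $\phi_e$ to $N\leq G_{i_e-1}$ sends this to $d_{e,j}\phi_e(z_e)\neq 0$, so $[\tilde z_{e,j}]\neq 0$ in $H_1(M_{\tilde\sigma^m};\QQ)$. Lemma \ref{survivalCriterion} then yields the desired nondegenerate symmetric solution.

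The main technical obstacle I anticipate is the bookkeeping in the two normal-core reductions, which repackage abstract finite-index subgroups of $\pi_1(M_\sigma)$ as fundamental groups of multitwist mapping torus covers. This relies on the finite generation of $\pi_1(F)$ (so it has only finitely many subgroups of each index and some power of $\sigma$ preserves the chosen kernel) together with the observation that taking sufficiently high powers permits each Dehn twist factor of $\sigma$ to lift as a product of Dehn twists along the preimages of its curve.
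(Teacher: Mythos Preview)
Your approach is the same as the paper's: locate each $z_e$ in the RFRS filtration, pass to a common finite cover, and apply Lemma~\ref{survivalCriterion}. There is one genuine slip, however. You assert that $\phi_e|_N([\tilde z_{e,j}])=d_{e,j}\,\phi_e(z_e)$, but $[\tilde z_{e,j}]=g\,z_e^{d_{e,j}}g^{-1}$ for some $g\in\pi_1(M_\sigma)$ that need not lie in $G_{i_e-1}$, and a homomorphism $\phi_e:G_{i_e-1}\to\QQ$ is invariant only under conjugation by elements of $G_{i_e-1}$. For a concrete failure, take $F_2=\langle a,b\rangle$ with the index-two subgroup $G_1=\ker(F_2\to\ZZ/2,\ a\mapsto 1)$, which is free on $\{a^2,\,b,\,aba^{-1}\}$; the homomorphism $\phi:G_1\to\QQ$ sending $b\mapsto 1$ and the other two generators to $0$ satisfies $\phi(aba^{-1})=0\neq 1=\phi(b)$, even though $aba^{-1}$ is conjugate to $b$ in $F_2$.

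The repair uses exactly the normality of $N$ you already arranged. Since $N\trianglelefteq\pi_1(M_\sigma)$, conjugation by $g$ is an automorphism of $N$ and hence of $H_1(N;\QQ)$, so $[\tilde z_{e,j}]$ is nonzero there if and only if $z_e^{d_{e,j}}$ is. Now $z_e^{d_{e,j}}=g^{-1}[\tilde z_{e,j}]g\in N$ by normality, and $\phi_e|_N(z_e^{d_{e,j}})=d_{e,j}\,\phi_e(z_e)\neq 0$ since $z_e\in G_{i_e-1}$; hence $z_e^{d_{e,j}}$, and therefore $[\tilde z_{e,j}]$, is nonzero in $H_1(N;\QQ)=H_1(M_{\tilde\sigma^m};\QQ)$. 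With this two-step replacement for the offending sentence, your proof goes through.
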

		
		\begin{proof} 
			By passing to a finite covering, we may assume that the fundamental group
			of $M=M_\sigma$ is itself RFRS.
			Let $\pi_1(M)=G_0\triangleright G_1\triangleright\cdots$ be a cofinal filtration
			of finite index subgroups as in Definition \ref{RFRS}, and let $\tilde{M}_i$ be the 
			finite regular covering of $M$ corresponding to $G_i$. For any curve $z_e$, 
			suppose, as an element
			of $\pi_1(M)$, $z_e$ lies in $G_i$ but not in $G_{i+1}$, 
			then any component of the preimage of $z_e$ in $\tilde{M}_i$ is rational
			homologically nontrivial. Hence this holds for any finite cover
			of $\tilde{M}_i$ as well.	
			As there are finitely many $z_e$'s, we may assume $\tilde{M}_i$ is a finite covering such
			that the components of the preimage of all $z_e$'s are rationally homologically nontrivial.
			Note that $\tilde{M}_i$ is the mapping torus of some automorphism $(\tilde{F},\tilde{\sigma})$
			covering $(F,\sigma)$ of finite degree, where $\tilde{F}$ is the regular covering
			of $F$ corresponding to the subgroup $\pi_1(F)\cap G_i$.
			By further passing to sufficiently high power $\tilde\sigma^m$ of $\tilde\sigma$, 
			we have that $(\tilde{F},\tilde{\sigma}^m)$ is a multitwist. It has
			a bipartite configuration	graph since this holds for $(F,\sigma)$, 
			so from above we conclude that it has a nondegenerate symmetric
			solution by Lemma \ref{survivalCriterion}.		
		\end{proof}
		
		In Figure \ref{figCurrent}, we have
		sketched a nondegenerate current on an orientable closed surface with a multitwist.
		The simple closed curves marked with integers indicate the $z_e$'s and $b_e$'s,
		while the directed green curves represents a nondegenerate \emph{current}, by
		which we mean a first homology class preserved by the multitwist that
		intersects each $z_e$ algebraically nontrivially.
		
		\begin{figure}[htb]
			\centering
			\includegraphics[scale=.8]{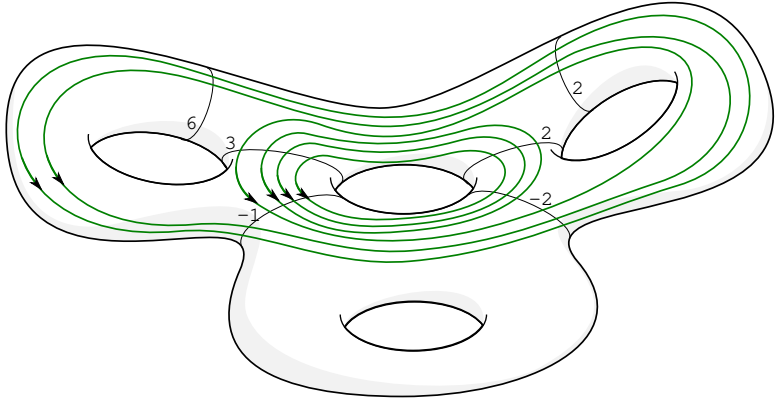}
			\caption{A nondegenerate current.}\label{figCurrent}
		\end{figure}

	\subsection{The BKN equations}\label{Subsec-BKNEqns} 
		For a nontrivial graph manifold, 
		the Buyalo-Kobel'ski\u{\i}-Neumann (BKN) equations are structure equations related to nonpositively curved metrics.
		Focusing on mapping tori of multitwists, we prefer rearranging them
		into the following simplified form:
		
		\begin{definition}\label{BKNEqns} 
			Let $(F,\sigma)$ be a multitwist 
			with a bipartite configuration graph $(\cfggraph,\{\chi_v\},\{b_e\})$.
			The system of (affine) the \emph{BKN equations} 
			associated to $(F,\sigma)$, with the unknowns $u_\delta\in(0,+\infty)$, and
			$\gamma_\delta\in [-1,1]$, where $\delta\in\widetilde\edge(\cfggraph)$,
			consists of the following:
			\begin{itemize}
				\item For any vertex $v$ of $\cfggraph$, 
					$$\sum_{\delta\in\widetilde\edge(v)}\,\frac{1-u_\delta\gamma_\delta}{b_\delta}\,=\,0;$$
				\item For any cycle $c$ of $\cfggraph$,
					$$\sum_{\delta\in\widetilde\edge(c)}\,\log(u_\delta)\,=\,0.$$
			\end{itemize}
			A solution $\{(u_\delta,\gamma_\delta)\}$ of the BKN equations is said to be \emph{symmetric} if 
			$u_\delta=\frac{1}{u_{\bar\delta}}$, and $\gamma_\delta=\gamma_{\bar\delta}$ for
			each end-of-edge $\delta$, and to be \emph{nondegenerate} if no $\gamma_\delta$ equals $\pm1$.
		\end{definition}
		
		\begin{remark}\label{generalBKN} 
			In general, the BKN equations can be defined for any nontrivial
			graph manifold $M$ with base-orientable JSJ pieces. Let the positive integer $|b_e|$ be 
			the absolute value of the
			intersection number of fibers from adjacent pieces on $T_e$
			for each edge $e\in\edge(\cfggraph)$, and the rational number $k_v$ be the charge at each vertex $v\in\vrtx(\cfggraph)$,
			(cf.~Subsection \ref{Subsec-multitwist}). With the unknowns
			$a_v\in[0,+\infty)$ for each vertex $v\in\vrtx(\cfggraph)$, and $\gamma_e\in[-1,1]$ for each edge $e\in\edge(\cfggraph)$,
			the BKN equations are defined as:
				$$k_va_v\,=\,\sum_{\delta\in\widetilde\edge(v)} \frac{a_{v(\bar\delta)}\gamma_{e(\delta)}}{|b_{e(\delta)}|},$$
			for every vertex $v\in\vrtx(\cfggraph)$, where $v(\bar\delta)$ means the vertex adjacent to the end-of-edge $\bar\delta$,
			and $e(\delta)$ means the edge carrying $\delta$, (\cite[Section 3]{BS}).
			We affinize the unknowns by taking 
			$u_\delta\,=\,\frac{a_{v(\bar\delta)}}{a_{v(\delta)}}$, and
			$\gamma_\delta\,=\,\mathrm{sgn}(b_{e(\delta)})\cdot\gamma_{e(\delta)}$ for each end-of-edge $\delta\in\widetilde\edge(\cfggraph)$.
			Note that $M$ virtually fibers if and only if $a_v$'s can be picked to be positive, (\cite[Theorem 3.1]{BS}).
			Note also that $k_v=\sum_{\delta\in{\widetilde\edge(v)}}\,\frac{1}{b_\delta}$ is the formula of
			the charge for mapping tori of multitwists,
			(Subsection \ref{Subsec-multitwist}).
			It is clear our definition above is equivalent to the original BKN equations.
		\end{remark}
		
		Besides several other results, in \cite{BS}, Buyalo and Svetlov
		provided a criterion for a graph manifold to be nonpositively curved. In particular,
		their result implies:
		
		\begin{theorem}[Cf.~{\cite[Theorem 3.1]{BS}}]\label{Thm-BS} 
			Let $(F,\sigma)$ be a multitwist 
			with a bipartite configuration graph. Then the mapping
			torus $M_\sigma$ is nonpositively curved if and only if
			the BKN equations associated to $(F,\sigma)$ have a nondegenerate symmetric solution.
			Moreover, in this case, there is a nondegenerate symmetric solution over $\QQ$. 
		\end{theorem}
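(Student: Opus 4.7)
The plan is to deduce this from the Buyalo-Svetlov criterion [BS, Theorem 3.1] via the affinization change of variables already sketched in Remark \ref{generalBKN}. Since $M_\sigma$ is itself a surface bundle over the circle, it certainly virtually fibers, so [BS, Theorem 3.1] asserts that $M_\sigma$ is nonpositively curved if and only if the original (non-affine) BKN equations admit a solution with all $a_v$ strictly positive and all $\gamma_e$ strictly in $(-1,1)$.

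The main verification is that the substitution $u_\delta = a_{v(\bar\delta)}/a_{v(\delta)}$, $\gamma_\delta = \mathrm{sgn}(b_{e(\delta)})\,\gamma_{e(\delta)}$ sets up a bijection between positive solutions of the original BKN equations (modulo overall rescaling $a_v\mapsto\lambda a_v$) and solutions of the reformulated system. In one direction, dividing the original vertex equation at $v$ by $a_v$ and using the multitwist formula $k_v=\sum_\delta 1/b_\delta$ (which uses the signed $b_\delta$) yields exactly the rewritten vertex equation $\sum_\delta (1-u_\delta\gamma_\delta)/b_\delta=0$, after absorbing the sign of $b_e$ into $\gamma_\delta$; meanwhile the cycle equation $\sum_{c}\log u_\delta=0$ is a telescoping identity $\sum \log(a_{v(\bar\delta)}/a_{v(\delta)})=0$. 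In the opposite direction, given a solution of the reformulated system, the cycle condition is precisely the cocycle obstruction needed to reconstruct $a_v$ (well-defined up to a global scalar) by setting $a_w=\prod u_{\delta_i}$ along any directed path from a fixed basepoint to $w$, and the rewritten vertex equations then imply the original ones. Symmetry is automatic: $u_\delta u_{\bar\delta}=1$ holds by definition, and $\gamma_\delta=\gamma_{\bar\delta}$ because both $\gamma_e$ and $\mathrm{sgn}(b_e)$ depend only on the unoriented edge. The nondegeneracy $|\gamma_\delta|\ne 1$ is equivalent to $|\gamma_e|\ne 1$, matching the strict nonpositive curvature condition in [BS].

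For the rationality claim, I would invoke the fact that the Buyalo-Svetlov construction produces a solution over $\QQ$ in the original variables (they build it via linear algebra together with a scaling argument); the affinization then sends rational positive solutions to rational solutions of the reformulated system, since each $u_\delta$ is a ratio of positive rationals and each $\gamma_\delta$ differs from $\gamma_e$ only by a sign. The main obstacle, modest as it is, is bookkeeping around the sign conventions in the bipartite orientation of $\cfggraph$ and verifying that the charge formula $k_v=\sum_\delta 1/b_\delta$ (which is special to multitwists) is what allows the original vertex equation to be rearranged into the clean affine form above; the substantive geometric content is entirely imported from [BS, Theorem 3.1].
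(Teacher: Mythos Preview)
Your proposal is correct and matches the paper's approach: the paper does not give a proof of this theorem at all, but simply cites it from \cite{BS}, with Remark~\ref{generalBKN} recording precisely the change of variables $u_\delta=a_{v(\bar\delta)}/a_{v(\delta)}$, $\gamma_\delta=\mathrm{sgn}(b_{e(\delta)})\gamma_{e(\delta)}$ and the multitwist charge formula $k_v=\sum_\delta 1/b_\delta$ that you use to translate between the two formulations. Your write-up just makes explicit the bookkeeping (telescoping of the cycle equation, reconstruction of the $a_v$ from the cocycle condition, and preservation of symmetry, nondegeneracy, and rationality) that the paper leaves to the reader.
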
 
		
		To interpretate the unknowns geometrically, every nondegenerate symmetric solution of the 
		affine BKN equations endows $M_\sigma$ with a nonpositively curved metric
		so that the JSJ tori are totally geodesic, Eulidean, with 
		$u_\delta$ being the ratio between the lengths
		of the fibers $f_{\bar\delta}$ and $f_\delta$, 
		and $\gamma_\delta$ being the cosine of the angle 
		between these fibers.

	\subsection{Nondegenerate symmetric solutions}\label{Subsec-nsSolutions}
		In this subsection, we study the connection between the current equations 
		and the BKN equations. Heuristically speaking, a multitwist defines a 
		nonpositively curved mapping torus if and only if
		it virtually admits a nondegenerate current:
		
		\begin{prop}\label{nsSolutions} 
			Let $(F,\sigma)$ be a multitwist 
			with a bipartite configuration graph.
			Then the BKN equations associated to $(F,\sigma)$
			have a nondegenerate symmetric solution if
			and only if for some finite covering $(\tilde{F},\tilde\sigma)$ of $(F,\sigma)$,
			the current equations associated to $(\tilde{F},\tilde\sigma)$ have
			a nondegenerate symmetric solution.
		\end{prop}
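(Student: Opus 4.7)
The plan is to prove both directions by bridging through nonpositive curvature via Theorem \ref{Thm-BS}, exploiting the virtual invariance of nonpositive curvature on graph manifolds (Subsection \ref{Subsec-npc}) so that BKN solvability on $(F,\sigma)$ is equivalent to BKN solvability on any finite cover $(\tilde F,\tilde\sigma)$. For the \emph{if} direction, suppose $\{x_\delta\}$ is a nondegenerate symmetric current solution on a finite cover $(\tilde F,\tilde\sigma)$. I would construct a nondegenerate symmetric BKN solution on the same cover by perturbing the degenerate solution $(u_\delta,\gamma_\delta)=(1,1)$. The ansatz is a one-parameter family $u_\delta(\epsilon)=\exp(\epsilon\, b_\delta x_\delta+O(\epsilon^2))$ and $\gamma_\delta(\epsilon)=\cos(\epsilon|b_\delta x_\delta|+O(\epsilon^2))$; note that $|b_\delta x_\delta|=|b_{\bar\delta}x_{\bar\delta}|$ and $b_\delta x_\delta=-b_{\bar\delta}x_{\bar\delta}$, so the required symmetries $u_\delta u_{\bar\delta}=1$ and $\gamma_\delta=\gamma_{\bar\delta}$ hold. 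Expanding the BKN equations in $\epsilon$, the first-order vertex and cycle contributions reduce exactly to the vertex and cycle current equations, and at second order the term $(b_\delta x_\delta)^2$ arising from expanding $u_\delta$ is precisely cancelled by the contribution of $\gamma_\delta(\epsilon)$ thanks to the choice $\alpha_\delta=|b_\delta x_\delta|$, leaving a balance equation solvable by a vanishing correction. Higher-order corrections can then be solved order by order. Nondegeneracy $\gamma_\delta(\epsilon)<1$ is automatic from $x_\delta\neq 0$. Once a nondegenerate symmetric BKN solution exists on the cover, Theorem \ref{Thm-BS} yields nonpositive curvature of $M_{\tilde\sigma}$, hence of $M_\sigma$, and Theorem \ref{Thm-BS} again produces the BKN solution on $(F,\sigma)$.

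For the \emph{only if} direction, suppose $(F,\sigma)$ admits a nondegenerate symmetric BKN solution. Then $M_\sigma$ is nonpositively curved by Theorem \ref{Thm-BS}, and Svetlov's Theorem \ref{Thm-Sv} provides a finite cover $M_{\tilde\sigma}$ fibering over the circle. Merely having a fibering is insufficient, since the mapping-torus fiber $\tilde F\subset M_{\tilde\sigma}$, though horizontal, intersects each JSJ torus $\tilde T_e$ along $\tilde z_e$ itself and so pairs trivially with $\tilde z_e$. The plan is to extract from the BKN data and the Rubinstein--Wang separability argument underlying Svetlov's theorem a further finite cover and an embedded horizontal surface $\tilde\Sigma$ whose intersection $\tilde\Sigma\cap\tilde T_e$ is transverse not only to the Seifert fibers from both adjacent JSJ pieces but also to $\tilde z_e$, i.e., whose class in $H_1(\tilde T_e)$ has nonzero $[\tilde f_\delta]$-component for every end-of-edge $\delta$. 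The BKN solution provides the flexibility needed: the angles $\gamma_\delta\in(-1,1)$ and fiber-length ratios $u_\delta$ determine a flat Euclidean metric on $\tilde T_e$ in which one can prescribe a slope transverse to every distinguished direction. The resulting intersection pairing $I([\tilde\Sigma],[\tilde z_e])$ is then nonzero for each edge $e$, so $[\tilde z_e]\neq 0$ in $H_1(M_{\tilde\sigma};\QQ)$, and Lemma \ref{survivalCriterion} produces the desired nondegenerate symmetric current solution on $(\tilde F,\tilde\sigma)$.

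The hardest step will be the production of a single horizontal surface with nontrivial pairing against every JSJ curve simultaneously, as needed in the \emph{only if} direction. Arranging one $\tilde z_e$ at a time is easy; controlling all of them at once requires a global argument combining the BKN data with the separability of horizontal surfaces, and will likely force passage to a further finite cover to absorb obstructions. By comparison, the perturbative step in the \emph{if} direction is essentially computational once the ansatz $\alpha_\delta=|b_\delta x_\delta|$ is identified, with the main subtlety being the inductive solvability and convergence of higher-order corrections beyond the second order.
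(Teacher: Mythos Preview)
Your strategy coincides with the paper's: bridge through nonpositive curvature via Theorem~\ref{Thm-BS} and its virtual invariance, perturb the degenerate BKN solution $(1,1)$ along the current data for the \emph{if} direction, and extract from the BKN solution a horizontal surface with controlled boundary slopes for the \emph{only if} direction. Your ansatz $u_\delta=\exp(\epsilon\,b_\delta x_\delta)$, $\gamma_\delta=\cos(\epsilon\,|b_\delta x_\delta|)$ is essentially the one the paper uses, and the vanishing of the vertex defects through second order is correct: one computes $\frac{1-u_\delta\gamma_\delta}{b_\delta}=-\epsilon x_\delta+\tfrac{1}{3}b_\delta^2 x_\delta^3\,\epsilon^3+O(\epsilon^4)$.

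The genuine gap is the sentence ``higher-order corrections can then be solved order by order.'' Killing the $\epsilon^3$ residual (and all subsequent ones) requires the linearization of the vertex-defect map to be surjective onto $\RR^{\vrtx(\cfggraph)}$, and you give no argument for this. Varying the $\gamma_\delta$'s alone is insufficient: for a bipartite $\cfggraph$ the resulting tangent vectors $\tfrac{1}{2b_\delta}(\partial_{y_{v(\delta)}}+\partial_{y_{v(\bar\delta)}})$ span only the hyperplane $\{\sum_v\varepsilon_v y_v=0\}$, and the cubic residual $\tfrac{1}{3}\sum_{\delta\in\widetilde\edge(v)}b_\delta^2 x_\delta^3$ has no reason to lie there. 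The paper closes this by parametrizing $u_\delta=\exp(t_{v(\delta)}-t_{v(\bar\delta)})$ so that cycle equations hold identically, then proving that the full map $W(\vec t,\vec\omega)$ is a \emph{submersion} at the origin whenever some charge $k_v\neq 0$ (the all-zero-charge case being handled by the explicit solution $(1,0)$), and finally replacing the approximate curve by a genuine curve in the smooth manifold $W^{-1}(0)$ with the same $2$-jet. This simultaneously disposes of the convergence issue you flag. For the \emph{only if} direction your outline is on target; the paper makes it precise in Lemma~\ref{virtualFiberSurface} by taking the explicit slopes $[\theta_\delta^\pm]=\tfrac{1\pm\gamma_\delta}{2b_\delta}(\phi_{\bar\delta*}[f_{\bar\delta}]\pm u_\delta[f_\delta])$, whose linear independence forces at least one of them to pair nontrivially with $[z_\delta]$, and then invoking the Rubinstein--Wang separability criterion to pass to an embedding cover.
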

		
		We prove Proposition \ref{nsSolutions} in the rest of this subsection. As before,
		we write the bipartite configuration graph of $(F,\sigma)$ as
		$(\cfggraph,\{\chi_v\},\{b_e\})$, and the essential curves on $F$ defining
		$\sigma$ as $z_e$'s, and the mapping torus of $(F,\sigma)$ as $M_\sigma$.
		
		The `if' direction follows from perturbing nondegenerate symmetric solutions
		of the current equations: 
		
		\begin{lemma}\label{curr2BKN} 
			If the current equations associated to
			$(F,\sigma)$ have a nondegenerate symmetric solution, so do the BKN equations.
		\end{lemma}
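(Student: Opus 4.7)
The plan is to build a smooth one-parameter family $\epsilon\mapsto(u_\delta(\epsilon),\gamma_\delta(\epsilon))$ of symmetric solutions to the cycle BKN equations starting at the degenerate point $(1,1)$, along which the vertex equations also hold and $\gamma_\delta$ drops strictly below $1$ for $\epsilon>0$. The tangent direction at $\epsilon=0$ will be dictated by the given nondegenerate symmetric current $\{x_\delta\}$.

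For the $u$-part, set
$$u_\delta(\epsilon)\,:=\,\exp(\epsilon\, b_\delta x_\delta).$$
The symmetry $u_\delta u_{\bar\delta}=1$ follows from $b_\delta=b_{\bar\delta}$ and $x_\delta=-x_{\bar\delta}$, while $\sum_{\delta\in\widetilde\edge(c)}\log u_\delta=\epsilon\sum_c b_\delta x_\delta=0$ by the cycle current equations, so the cycle BKN equations hold identically in $\epsilon$. For the $\gamma$-part, I seek a symmetric formal power series $\gamma_\delta(\epsilon)=1+\sum_{k\geq 2}\epsilon^k\gamma^{(k)}_\delta$ making the vertex equations hold order by order.

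Expanding the vertex residual, the order-$\epsilon$ term equals $-\sum_{\delta\in v}x_\delta$ and vanishes by the vertex current equations, while the order-$\epsilon^2$ term rearranges to the leading constraint
$$\sum_{\delta\in v}\frac{\gamma^{(2)}_\delta}{b_\delta}\,=\,-\sum_{\delta\in v}\frac{b_\delta x_\delta^2}{2}\qquad(v\in\vrtx(\cfggraph)),$$
which is solved by the symmetric choice $\gamma^{(2)}_\delta=-\tfrac{1}{2}b_\delta^2 x_\delta^2$. Nondegeneracy $x_\delta\neq 0$ gives $\gamma^{(2)}_\delta<0$, so $\gamma_\delta(\epsilon)<1$ strictly for small $\epsilon>0$, confirming nondegeneracy of the BKN solution once it is constructed.

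The hard part will be pushing this past leading order. At each $k\geq 3$ the vertex equation reduces to a linear system of the shape $\sum_{\delta\in v}\gamma^{(k)}_\delta/b_\delta=R_v^{(k)}$ with $R_v^{(k)}$ a polynomial in the previously determined data; the cokernel of the underlying linear operator is spanned by the bipartite sign $\varepsilon\colon\vrtx(\cfggraph)\to\{\pm 1\}$, so solvability requires $\sum_v\varepsilon(v)R_v^{(k)}=0$ at every step. For even $k$ this identity is automatic, obtained by pairing each $\delta$ with $\bar\delta$ and using $x_\delta^2=x_{\bar\delta}^2$ together with bipartiteness. For odd $k$ one generally produces a nonzero obstruction, which I plan to absorb by enlarging the ansatz for $u$ to $\exp\bigl(\epsilon b_\delta x_\delta+\sum_{j\geq 2}\epsilon^j\alpha^{(j)}_\delta\bigr)$ with each $\alpha^{(j)}$ antisymmetric in $\delta\leftrightarrow\bar\delta$ and satisfying $\sum_c\alpha^{(j)}_\delta=0$ on cycles, so that the symmetry $u_\delta u_{\bar\delta}=1$ and the cycle BKN equations remain intact. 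In contrast to the symmetric $\gamma^{(k)}$, these antisymmetric corrections contribute nontrivially to the bipartite sums and supply exactly the one-dimensional freedom needed to cancel each odd-order obstruction as it arises. Standard real-analytic estimates then give convergence of the resulting series on some interval $(0,\epsilon_0)$, producing the desired nondegenerate symmetric BKN solution.
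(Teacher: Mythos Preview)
Your approach starts at the same degenerate point $(u,\gamma)=(1,1)$ as the paper and uses the same first-order data $u_\delta'(0)=b_\delta x_\delta$, so the strategies coincide through second order. The paper, however, parametrizes via $(t_v,\omega_\delta)$, proves that the vertex-residual map $W$ is a \emph{submersion} at the origin (using that some charge $k_v\ne 0$), and then invokes the implicit function theorem once to find a genuine curve of solutions $2$-jet equivalent to an explicit approximate curve built from $\{x_\delta\}$. This bypasses all order-by-order solvability and convergence issues; the case $k_v\equiv 0$ is disposed of separately by the exact solution $(u,\gamma)=(1,0)$.

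Your order-by-order scheme has two concrete gaps. First, the parity claim ``for even $k$ the obstruction $\sum_v\varepsilon(v)R_v^{(k)}$ vanishes automatically'' does not follow from the reason you give. Once a nonzero $\gamma^{(3)}$ (or $\alpha^{(3)}$) has been introduced at the previous step, the order-$4$ residual contains the cross term $-\sum_{\delta\in v}x_\delta\,\gamma^{(3)}_{e(\delta)}$, which is \emph{odd} under $\delta\leftrightarrow\bar\delta$ and hence contributes $-2\sum_e\varepsilon(v_+)x_{\delta_e}\gamma^{(3)}_e$ to the $\varepsilon$-pairing; your justification ``$x_\delta^2=x_{\bar\delta}^2$'' only controls the pure powers of $x_\delta$, not these cross terms. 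What actually makes the scheme run is not an even/odd dichotomy but the surjectivity of the combined linear operator (symmetric $\gamma^{(k)}$ together with exact antisymmetric $\alpha^{(k)}$) onto $\RR^{\vrtx(\cfggraph)}$: the $\gamma$-part misses only the line $\RR\varepsilon$, and one computes $\sum_v\varepsilon(v)\sum_{\delta\in v}\alpha_\delta/b_\delta=2\sum_v\varepsilon(v)k_v f_v$ for $\alpha_\delta=f_{v(\delta)}-f_{v(\bar\delta)}$, which hits that line precisely when some $k_v\ne 0$. This is exactly the paper's submersion condition, and you need it at \emph{every} order, not just the odd ones. Second, you never treat the case $k_v\equiv 0$, where the $\alpha$-corrections give no freedom in the $\varepsilon$-direction and your perturbative scheme breaks down (while $(1,0)$ is available trivially). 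Finally, ``standard real-analytic estimates then give convergence'' is doing the work of the analytic implicit function theorem; once you grant surjectivity of the linearization you are effectively reproving it by hand, which is what the paper's submersion argument supplies in one stroke.
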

		
		\begin{proof} 
			Write $k_v=\sum_{\delta\in\widetilde\edge(\cfggraph)}\frac1{b_\delta}$ for the charge
			at a vertex $v\in\vrtx(\cfggraph)$. If all the charges $k_v$'s are zero, there is clearly a nondegenerate
			symmetric solution $\{(u^*_\delta,\gamma^*_\delta)\}$ to the BKN equations, where $u^*_\delta=1$ and
			$\gamma^*_\delta=0$. In the following, we suppose that not all the charges are zero.
			
			Let $\{x^*_\delta\}$ be a nondegenerate symmetric solution of the current equations.
			With the new parameters $t_v$'s and $\omega_\delta$'s in $\RR$, we put:
				$$u_\delta=\exp(t_{v(\delta)}-t_{v(\bar\delta)}),\textrm{ and }\gamma_\delta=\cos(\sqrt{\omega_\delta+\omega_{\bar\delta}}),$$
			where $v(\delta)$ denotes the vertex that $\delta$ is adjacent to,
			and where $\cos(\sqrt{z})$ stands for the analytic function
			$\sum_{j=0}^\infty \frac{(-1)^j}{(2j)!}z^j$.
			Then the BKN equations around cycles are automatically satisfied.
			
			To ensure the BKN equations
			at vertices, consider the functions:
				$$W_v(\vec{t},\vec{\omega})\,=\,
					\sum_{\delta\in\widetilde\edge(v)}\,
					\frac{1-\exp(t_{v}-t_{v(\bar\delta)})\cos(\sqrt{\omega_\delta+\omega_{\bar\delta}})}{b_\delta},$$
			for each vertex $v\in\vrtx(\cfggraph)$,
			where $\vec{t}$ and $\vec{\omega}$ are the vectors
			with the coordinates $t_v$'s and $\omega_\delta$'s, respectively. 
			Formally, these together define a smooth map:
				$$W:\,\RR^{\vrtx(\cfggraph)}\times\RR^{\widetilde\edge(\cfggraph)}\to\RR^{\vrtx(\cfggraph)},$$ 
			sending the origin to the origin. Our goal is to use the existence of
			$\{x^*_\delta\}$ to show that for some $\vec{t}$ sufficiently close to the origin, 
			there are $\vec\omega$ with sufficiently small positive coordinates,
			fulfilling $W_v(\vec{t},\vec{\omega})=0$ for any vertex $v\in\vrtx(\cfggraph)$.
			
			First observe that $W$ is a submersion near the origin. Indeed:
				$$\ud W_v|_{O}\,=\,\sum_{\delta\in\widetilde\edge(v)}\,\frac{-2\ud t_{v}+2\ud t_{v(\bar\delta)}+\ud \omega_{\delta}+\ud
					 \omega_{\bar\delta}}{2b_\delta},$$
			so the tangent map:
				$$\ud W|_O:T_O(\RR^{\vrtx(\cfggraph)}\times\RR^{\widetilde\edge(\cfggraph)})\,\to\, T_{W(O)}\RR^{\vrtx(\cfggraph)},$$
			satisfies at the origin:
			\begin{eqnarray*}
				\ud W\left(\frac{\partial}{\partial t_v}\right)
					&=
					&-k_v\,\frac{\partial}{\partial y_{v}}\,+\,
						\sum_{\delta\in\widetilde\edge(v)}\,\frac1{b_\delta}\,\frac{\partial}{\partial y_{v(\bar\delta)}},\\
				\ud W\left(\frac{\partial}{\partial\omega_\delta}\right)
					&=
					&\frac1{2b_\delta}\,\frac{\partial}{\partial y_{v(\delta)}}\,+\,
						\frac1{2b_\delta}\,\frac{\partial}{\partial y_{v(\bar\delta)}}.
			\end{eqnarray*}
			Here the $y_v$'s stand for the natural coordinate parameters of the target space $\RR^{\vrtx(\cfggraph)}$. 
			To see $\ud W|_O$ is surjective, note that by our assumption, 
			there is some vertex $u\in\vrtx(\cfggraph)$ at which the charge $k_u$ nonzero. Since the image of $\frac\partial{\partial
			t_u}\,-\,2\sum_{\delta\in\widetilde\edge(u)}\,\frac\partial{\partial\omega_\delta}$
			under $\ud W|_O$ equals $-2k_u\frac\partial{\partial y_u}$, we see $\frac\partial{\partial y_u}\in\mathrm{Im}(\ud W|_O)$.
			Thus for any vertex $v$ adjacent to $u$, $\frac\partial{\partial y_v}$ also lies in $\mathrm{Im}(\ud W|_O)$ by the formula of $\ud
			W(\frac\partial{\partial \omega_\delta})$
			for $\delta\in\widetilde\edge(v)$. Further applying
			the formula of $\ud W(\frac\partial{\partial \omega_\delta})$'s for other 
			end-of-edges, it follows from the connectivity of the graph $\cfggraph$ that
			for all $v\in\vrtx(\cfggraph)$, $\frac\partial{\partial y_v}$ lies in $\mathrm{Im}(\ud W|_O)$. 
			This means $\ud W|_O$ is surjective.
			Now as $W$ is a submersion near the origin,
			the preimage of the origin of $\RR^{\vrtx(\cfggraph)}$ is a smooth
			submanifold $\mathcal{V}$ of $\RR^{\vrtx(\cfggraph)}\times
			\RR^{\widetilde\edge(\cfggraph)}$ through the origin, of the dimension
			$|\widetilde\edge(\cfggraph)|$.
			
			Because $\{x^*_\delta\}$ satisfies the current equations around cycles, it is clear that there exist
			$l^*_v\in\RR$, for all $v\in\vrtx(\Lambda)$, such that $b_\delta x^*_\delta$ equals
			$l^*_{v(\delta)}-l^*_{v_{(\bar\delta)}}$ for each end-of-edge $\delta\in\widetilde\edge(\cfggraph)$.
			Consider the curve $\alpha:(-\epsilon,\epsilon)\to\RR^{\vrtx(\cfggraph)}\times\RR^{\widetilde\edge(\cfggraph)}$,
			where $\alpha(s)=(\vec{t}(s),\,\vec{\omega}(s))$ is defined by:
				$$t_v(s)\,=\,l^*_v s,
					\textrm{ and } 
					\omega_\delta(s)\,=\,\frac12\arccos^2\left(\frac1{\cosh(b_\delta x^*_\delta s)}\right).$$
			Since $\{x^*_\delta\}$ is symmetric, i.e.~ $x^*_{\bar\delta}=-x^*_\delta$,
			we have:
			\begin{eqnarray*}
				W_v(\alpha(s))	
					&=& \sum_{\delta\in\widetilde\edge(v)}\,
					\frac{
						1 - \exp\left(b_\delta x^*_\delta s\right)\cdot\left(\cosh(b_\delta x^*_\delta s)\right)^{-1}
					}
					{b_\delta}
				\\
				&=&	-\,\sum_{\delta\in\widetilde\edge(v)}\,\frac{\tanh(b_\delta x^*_\delta s)}{b_\delta}.
			\end{eqnarray*}
			Using the current equation at vertices, it is clear that all the $W_v(\alpha(s))$ vanish up to the second order,
			namely, $W_v(\alpha(s))=o(s^2)$. Thus there exists a curve $\alpha^*:(-\epsilon,\epsilon)\to\mathcal{V}$
			which is jet-$2$ equivalent to $\alpha$. Hence
			$\alpha^*(s)=(\vec{t}^*(s),\,\vec\omega^*(s))$ has the Taylor expansion near zero:
				$$t^*_v(s)\,=\,l^*_v s+o(s^2),
					\textrm{ and }
					\omega^*_\delta(s)\,=\,\frac{(b_\delta x^*_\delta)^2 s^2}2+o(s^2).$$
			Because $\{x^*_\delta\}$ is nondegenerate, $\omega^*_\delta(s)$ is strictly positive for any sufficiently small nonzero $s$.
			Therefore, for any sufficiently small nonzero $s$, the corresponding 
			$u_\delta^*=\exp\left(t^*_{v(\delta)}(s)-t^*_{v(\bar\delta)}(s)\right)$ and
			$\gamma_\delta^*=\cos\left(\sqrt{\omega_\delta^*(s)+\omega^*_{\bar\delta}(s)}\right)$
			gives a nondegenerate symmetric solution of the BKN equations.
		\end{proof}
		
		To prove the `only-if' direction, we need some ingredient from \cite{Sv}:
		
		\begin{lemma}\label{virtualFiberSurface} 
			If the BKN equations associated to
			$(F,\sigma)$ has a nondegenerate symmetric solution over $\QQ$, then there
			are linearly independent elements $[\theta^+_\delta],\,[\theta^-_\delta]\in H_1(T_\delta;\QQ)$
			for each end-of-edge $\delta\in\widetilde\edge(\cfggraph)$, subject to
			$[\theta^\pm_\delta]=\pm\phi_{\bar\delta*}([\theta^\pm_{\bar\delta}])$, and 
			there is a connected immersed orientable closed surface $j:S\looparrowright M_\sigma$ satisfying the
			following. For the preimage $j^{-1}(T_\delta)$
			of each $T_\delta$, the homology class 
			$j_*[j^{-1}(T_\delta)]$ is a scalar multiple of $[\theta_\delta]=[\theta^+_\delta]+[\theta^-_\delta]$,
			and for each component $\hat\theta\subset j^{-1}(T_\delta)$, the homology class
			$j_*[\hat\theta]$ is a scalar multiple of either $[\theta^+_\delta]$ or $[\theta^-_\delta]$.
			Moreover, $S$ is a virtual fiber, namely, there is a surface bundle finitely covering $M_\sigma$
			with the fiber covering $S$ up to homotopy.
		\end{lemma}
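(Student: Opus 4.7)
The plan is to extract the statement from the virtual fibering construction of Svetlov in \cite{Sv}, translated into the present multitwist notation. By Theorem \ref{Thm-BS}, a nondegenerate rational symmetric solution $\{(u_\delta,\gamma_\delta)\}$ endows $M_\sigma$ with a nonpositively curved Riemannian metric in which each JSJ torus $T_\delta$ is a flat totally geodesic torus; on $T_\delta$ the Seifert fibers from the two incident pieces appear as two directed geodesic circles of length ratio $u_\delta$ meeting at the angle $\arccos(\gamma_\delta)$, which lies strictly between $0$ and $\pi$ by nondegeneracy.

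First I would identify the classes $[\theta^\pm_\delta]$ geometrically. On the flat torus $T_\delta$ the two fiber directions $[f_\delta]$ and $\phi_{\bar\delta*}[f_{\bar\delta}]=[f_\delta]+b_\delta[z_\delta]$ span a nondegenerate angle, and I take $[\theta^+_\delta]$ and $[\theta^-_\delta]$ to be rational multiples of the interior angle bisector and the perpendicular bisector of this pair, respectively. Expanding in the Waldhausen basis $(f_\delta,z_\delta)$, the coefficients are explicit rational functions of $u_\delta$, $\gamma_\delta$, and $b_\delta$, hence rational. The relations $[\theta^\pm_\delta]=\pm\phi_{\bar\delta*}[\theta^\pm_{\bar\delta}]$ then follow by direct computation from the symmetries $u_\delta=1/u_{\bar\delta}$ and $\gamma_\delta=\gamma_{\bar\delta}$ together with the formula for $\phi_{\delta*}$ recorded in Subsection \ref{Subsec-multitwist}; geometrically, the gluing involution on $T_\delta$ preserves the angle bisector and reverses the perpendicular bisector.

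Next I would construct $S$ piece by piece. In each JSJ piece $J_v=F_v\times S^1$, the rational transverse slopes $[\theta^+_\delta]$ and $[\theta^-_\delta]$ on each component $T_\delta\subset\partial J_v$ are realized, after clearing denominators, as the boundary slopes of an immersed horizontal subsurface $j_v:S_v\looparrowright J_v$ obtained from a suitable finite cover of $F_v$ together with a primitive map to $S^1$; nondegeneracy ensures these slopes are genuinely transverse to the fiber. After adjusting multiplicities, the matching of boundary slopes across each JSJ torus supplied by the $\pm$ compatibility allows us to glue the $S_v$ into a connected immersed orientable closed surface $j:S\looparrowright M_\sigma$. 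The two families of components of $j^{-1}(T_\delta)$ land on the slopes $[\theta^+_\delta]$ and $[\theta^-_\delta]$, respectively, yielding the asserted form of $j_*[\hat\theta]$ and of $j_*[j^{-1}(T_\delta)]$.

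The virtual fiber conclusion is the main obstacle, and here I would simply invoke Svetlov. The NPC metric, together with horizontality of each $S_v$ and the transverse slope data, places $S$ in the class of immersed horizontal surfaces to which the Rubinstein–Wang separability criterion applies as used in \cite{Sv}; one then concludes that $S$ lifts to an embedded fiber in some finite cover of $M_\sigma$. The first two steps above are essentially linear algebra on flat tori and the standard geometry of horizontal surfaces in Seifert pieces, while the virtual fiber step is the technical heart of \cite{Sv} and would be quoted directly.
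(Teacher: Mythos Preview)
Your outline is correct and takes the same route as the paper, which likewise extracts the construction from \cite{Sv}; your bisector description of $[\theta^\pm_\delta]$ is exactly the paper's explicit choice
\[
[\theta^\pm_\delta]\,=\,\frac{1\pm\gamma_\delta}{2b_\delta}\,\bigl(\phi_{\bar\delta*}[f_{\bar\delta}]\pm u_\delta\,[f_\delta]\bigr),
\]
since $u_\delta[f_\delta]$ and $\phi_{\bar\delta*}[f_{\bar\delta}]$ have equal length in the flat metric on $T_\delta$, so their sum and difference give the two bisector directions. The one point worth tightening is that the specific scalars, not just the bisector directions, are what make the argument run: with the paper's normalization one has $I([f_\delta],[\theta_\delta])=1$, the BKN vertex equation becomes precisely Neumann's compatibility condition $\sum_{\delta\in\widetilde\edge(v)} I\bigl([\theta_\delta],\,\phi_{\bar\delta*}[f_{\bar\delta}]/b_\delta\bigr)=k_v$ needed to assemble a horizontal surface in each $J_v$ (the step you phrase as ``adjusting multiplicities''), and the BKN cycle equation becomes the Rubinstein--Wang cocycle condition via $\bigl|I(\phi_{\bar\delta*}[f_{\bar\delta}],[\theta^\pm_\delta])\,/\,I([f_\delta],[\theta^\pm_\delta])\bigr|=u_\delta$. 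So both halves of the BKN system are used, not just the metric interpretation.
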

		
		\begin{proof} 
			We outline the proof in our context following \cite{Sv},
			(cf.~Remark \ref{generalBKN} for the translation of the notations). Suppose $\{(u_\delta,\gamma_\delta)\}$
			is a nondegenerate symmetric solution over $\QQ$ of the BKN equations associated to $(F,\sigma)$.
			In fact, the elements $[\theta^\pm_\delta]$ can be
			explicitly picked as:
				$$[\theta^\pm_\delta]=\frac{1\pm\gamma_\delta}{2b_\delta}\,(\phi_{\bar\delta*}[f_{\bar\delta}]\pm u_\delta\,[f_\delta]).$$
			Note these are linearly independent elements as $u_\delta>0$ and $-1<\gamma_\delta<1$. Moreover,
			it is straightforward to check the following, where $[\theta_\delta]=[\theta_\delta^+]+[\theta_\delta^-]$,
			and $I(\cdot,\,\cdot)$ denotes the intersection pairing on $H_1(T_\delta;\,\QQ)$:
			\begin{itemize}
				\item For each end-of-edge $\delta\in\widetilde\edge(\cfggraph)$, using the nondegeneracy: 
						$$I([f_\delta],\,[\theta_\delta^\pm])\,>\,0;$$
				\item For each end-of-edge $\delta\in\widetilde\edge(\cfggraph)$: 
						$$I([f_\delta],\,[\theta_\delta])\,=\,1;$$
				\item For each vertex $v\in\vrtx(\cfggraph)$, using the BKN equation at $v$
					and the formula of the charge $k_v$, (cf.~Subsection \ref{Subsec-multitwist}): 
						$$\sum_{\delta\in\widetilde\edge(v)}\,I\left([\theta_\delta],\,
						\frac{\phi_{\bar\delta*}[f_{\bar\delta}]}{b_\delta}\right)\,=\,k_{v};$$
				\item And for each end-of-edge $\delta\in\widetilde\edge(\cfggraph)$:
					$$\left|\frac{I(\phi_{\bar\delta*}[f_{\bar\delta}],\,[\theta_\delta^\pm])}{I([f_\delta],\,[\theta_\delta^\pm])}\right|
					\,=\,u_\delta.$$
			\end{itemize}
			The first three properties imply the existence of a connected, horizontal (i.e.~transverse to the
			fiber in each Seifert fibered piece), immersed, orientable closed surface $j:S\looparrowright M_\sigma$.
			Furthermore, one may require that for any component $\hat\theta\subset j^{-1}(T_\delta)$, 
			the homology class
			$j_*[\hat\theta]$ is a scalar multiple of either $[\theta^+_\delta]$ or $[\theta^-_\delta]$,
			and $j_*[j^{-1}(T_\delta)]$ is a scalar multiple of $[\theta_\delta]$,
			(\cite[Lemma 3.1]{Ne}; cf.~\cite[Proposition 1.4]{Sv}). 
			The last property together with the BKN equations around cycles imply that $S$ is a
			virtually embedded horizontal surface, and hence is a virtual fiber, 
			using a cocycle criterion due to Rubinstein and Wang (\cite{RW}; cf.~\cite[Proposition 1.4]{Sv}).
		\end{proof}

		\begin{lemma}\label{BKN2curr} 
			If the BKN equations associated to
			$(F,\sigma)$ have a nondegenerate symmetric solution, so do the current equations
			associated to some finite covering $(\tilde{F},\tilde{\sigma})$ of $(F,\sigma)$.
		\end{lemma}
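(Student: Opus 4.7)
The plan is to feed the BKN solution through Lemma~\ref{virtualFiberSurface} to produce a virtual fiber surface, pass to a finite cover in which it becomes an embedded fiber of a mapping torus of a lift of $\sigma$, and then exhibit non-vanishing intersection pairings that force each $[\tilde z_{\tilde e}]$ to be non-trivial in $H_1$. Lemma~\ref{survivalCriterion} will then convert this survival into the desired non-degenerate symmetric solution of the current equations for $(\tilde F,\tilde\sigma)$.

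Starting from a non-degenerate symmetric BKN solution $\{(u_\delta,\gamma_\delta)\}$ over $\QQ$ (afforded by Theorem~\ref{Thm-BS}), the key calculation is an intersection estimate in the JSJ torus $T_e$: using $\phi_{\bar\delta*}[f_{\bar\delta}]=[f_\delta]+b_\delta[z_\delta]$ together with the Waldhausen basis identity $I_{T_e}([f_\delta],[z_\delta])=1$, a direct computation with the explicit formula for $[\theta_\delta^+]$ from Lemma~\ref{virtualFiberSurface} gives
\[
I_{T_e}([z_\delta],[\theta_\delta^+])\,=\,-\frac{(1+\gamma_\delta)(1+u_\delta)}{2b_\delta},
\]
which is nonzero for every end-of-edge $\delta$ thanks to the non-degeneracy conditions $\gamma_\delta\in(-1,1)$ and $u_\delta>0$.

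Next I would refine the output of Lemma~\ref{virtualFiberSurface} to a virtual fiber $S^+\looparrowright M_\sigma$ built from the $[\theta_\delta^+]$ data alone. A direct computation yields the gluing relation $\phi_{\bar\delta*}[\theta_{\bar\delta}^+]=u_\delta^{-1}[\theta_\delta^+]$, so the associated multiplicative cocycle around any cycle $c$ of $\cfggraph$ is $\prod_{\delta\in\widetilde\edge(c)}u_\delta^{-1}$, which equals $1$ by the BKN cycle equation $\sum_{\delta\in\widetilde\edge(c)}\log u_\delta=0$. The Rubinstein-Wang cocycle criterion used in Lemma~\ref{virtualFiberSurface} therefore guarantees that the $[\theta^+]$ data assemble into a horizontal, connected, virtually embedded closed surface $j^+\colon S^+\looparrowright M_\sigma$ which is a virtual fiber and satisfies $j^{+}_{*}[(j^+)^{-1}(T_e)]\propto [\theta_\delta^+]$ on each $T_e$. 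Passing to a finite cover $\tilde M\to M_\sigma$ in which $S^+$ lifts to an embedded fiber $\tilde S^+$, and enlarging $\pi_1(\tilde M)$ if necessary so that it contains $\pi_1(F)$ while its projection to $\ZZ$ remains surjective, I arrange $\tilde M=M_{\tilde\sigma}$ for a lift $\tilde\sigma$ of $\sigma$ to a finite regular cover $\tilde F\to F$, so that $(\tilde F,\tilde\sigma)$ finitely covers $(F,\sigma)$ as required.

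In $\tilde M$, each lifted JSJ torus $\tilde T_{\tilde e}$ meets $\tilde S^+$ in parallel simple closed curves of class equal to a positive-integer multiple of the pullback of $[\theta_\delta^+]$, so for every $\tilde e$ the pairing $I_{\tilde M}([\tilde S^+],[\tilde z_{\tilde e}])$ is a positive-integer multiple of the non-zero rational $I_{T_e}([z_\delta],[\theta_\delta^+])$ computed above. Consequently $[\tilde z_{\tilde e}]\ne 0$ in $H_1(\tilde M;\QQ)$ for every edge of the configuration graph of $(\tilde F,\tilde\sigma)$, and Lemma~\ref{survivalCriterion} delivers the required non-degenerate symmetric solution of the current equations. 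The main obstacle is the $[\theta^+]$-only version of the horizontal-surface construction, namely verifying that the cocycle condition above suffices to paste together local horizontal subsurfaces in each Seifert piece (with boundary classes proportional to the $[\theta_\delta^+]$'s) into a connected virtually embedded closed surface; this step closely parallels the construction of $S$ in Lemma~\ref{virtualFiberSurface} and should carry through with the same combinatorial techniques.
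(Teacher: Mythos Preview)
Your overall plan---feed the BKN solution into Lemma~\ref{virtualFiberSurface}, pass to a cover where the resulting surface embeds as a fiber, and verify that each lifted $z_e$ survives in rational homology so that Lemma~\ref{survivalCriterion} applies---matches the paper's proof. Your intersection computation $I([z_\delta],[\theta_\delta^+])=-\frac{(1+\gamma_\delta)(1+u_\delta)}{2b_\delta}\neq 0$ is correct and is exactly the sort of non-vanishing one needs.

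The difference, and the place where your argument is weaker than necessary, is the detour through a new surface $S^+$ built from the $[\theta^+]$ data alone. The paper never does this: it uses the surface $S$ already supplied by Lemma~\ref{virtualFiberSurface}, whose boundary curves carry \emph{both} slopes $[\theta_\delta^\pm]$. In the regular cover $\kappa:\tilde M\to M_\sigma$ where a lift $\tilde S$ embeds as a fiber, the full preimage $\kappa^{-1}(j(S))$ is a union of deck-translates of $\tilde S$, and each translate meets a given lifted JSJ torus in curves of a single slope (a lift of $[\theta^+_\delta]$ or of $[\theta^-_\delta]$). Since at least one of $I([z_\delta],[\theta_\delta^\pm])$ is nonzero---your computation in fact shows the ``$+$'' one never vanishes---every component $\tilde z$ of $\kappa^{-1}(z_e)$ pairs nontrivially with some deck-translate of $\tilde S$, hence is nonzero in $H_1(\tilde M;\QQ)$. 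One then takes $\tilde F$ to be a component of $\kappa^{-1}(F)$ and passes to a high enough power of the monodromy to obtain a covering multitwist $(\tilde F,\tilde\sigma)$; Lemma~\ref{survivalCriterion} finishes.

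Your acknowledged ``main obstacle'' is therefore avoidable, and it is not clear it can be overcome as stated. The Neumann-type existence of an immersed horizontal surface in Lemma~\ref{virtualFiberSurface} relies on the vertex identity for $[\theta_\delta]=[\theta^+_\delta]+[\theta^-_\delta]$; if you rescale so that $I([f_\delta],\cdot)=1$ and try to run the same argument with $[\theta_\delta^+]$ alone, the required vertex condition becomes $\sum_{\delta\in\widetilde\edge(v)}(u_\delta-1)/b_\delta=0$, which the BKN vertex equation $\sum_\delta (1-u_\delta\gamma_\delta)/b_\delta=0$ does not imply in general. So the ``should carry through'' step is genuinely in doubt. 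The fix is simply to drop the $S^+$ construction and argue with deck-translates of $\tilde S$ as above. (Your side remark about ``enlarging $\pi_1(\tilde M)$ so that it contains $\pi_1(F)$'' should likewise be replaced by the paper's cleaner move: take a component of $\kappa^{-1}(F)$ as the new fiber and pass to a power of the monodromy.)
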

		
		\begin{proof} 
			By Theorem \ref{Thm-BS}, we may assume the
			nondegenerate symmetric solution to be over $\QQ$. 
			Let $j:S\looparrowright M_\sigma$ be the virtual fiber surface
			as ensured by Lemma \ref{virtualFiberSurface}. Note that for any essential simple closed
			curve $z_\delta\subset T_\delta$ corresponding to an end-of-edge $\delta\in\widetilde\edge(\cfggraph)$,
			at least one of the intersection numbers $I([z_\delta],\,[\theta^\pm_\delta])$ is nonzero.
			Suppose $\kappa:\tilde{M}_\sigma\to M_\sigma$ is a regular finite covering in which a finite covering 
			$\tilde{S}$ of $S$ embeds as a fiber. Then every component $\tilde{z}$ of $\kappa^{-1}(z_e)$ intersects
			a component of the preimage $\kappa^{-1}(j(S))$, i.e.~a conjuate of 
			$\tilde{S}$ under the covering transformation, 
			algebraically nontrivially. In particular, $\tilde{z}$ is nontrivial 
			in $\tilde{M}_\sigma$. Note that any component $\tilde{F}$ of $\kappa^{-1}(F)$ is still a 
			fiber of $\tilde{M}_\sigma$. Then a further finite covering of $\tilde{M}_\sigma$ is a mapping torus
			associated to a multitwist $(\tilde{F},\tilde{\sigma})$. Now the defining essential
			curves of $\tilde{\sigma}$ cover the $z_e$'s, so they
			are nontrivial in $H_1(M_{\tilde\sigma};\QQ)$. 
			By Lemma \ref{survivalCriterion}, this is to say the current equations associated to
			$(\tilde{F},\tilde\sigma)$ have a nondegenerate symmetric solution.
		\end{proof}
		
		\begin{proof}[Proof of Proposition \ref{nsSolutions}]
			We apply Lemma \ref{curr2BKN} to $(\tilde{F},\tilde{\sigma})$
			in the `if' direction. Note the mapping torus
			$M_{\tilde\sigma}$ is nonpositively curved if and only
			if so is $M_\sigma$, (\cite{KL}), thus equivalently,
			the BKN equations associated to $(\tilde{F},\tilde\sigma)$ 
			have a nondegenerate symmetric solution if and only if so do the BKN equations
			associated to $(F,\sigma)$, (Theorem \ref{Thm-BS}). The `only-if' direction follows
			from Lemma \ref{BKN2curr}.
		\end{proof}

\section{Virtual special cubulation}\label{Sec-SpCubulation}

	In this section, we construct a virtual special cube complex homotopy equivalent
	to the mapping torus of a multitwist with a bipartite configuration graph,
	under the assumption that the associated current equation possesses a nondegenerate
	symmetric solution, (Proposition \ref{SpCubulation}). 
	As mentioned before, this assumption should be regarded as
	a seemingly weaker translation of the RFRS condition.
	
	\begin{prop}\label{SpCubulation} 
		Let $(F,\sigma)$ be a multitwist 
		with a bipartite configuration graph.
		Suppose the current equations associated to $(F,\sigma)$
		have a nondegenerate symmetric solution. Then 
		for some integer $0\leq n\leq
		|\vrtx(\cfggraph)|-2$,
		the product $M_\sigma\times \RR^n$ of the mapping torus $M_\sigma$ and
		the $n$-space $\RR^n$ is 
		homeomorphic to a virtually special cube complex.
	\end{prop}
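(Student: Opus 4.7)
The plan is to build an explicit nonpositively curved cube complex $X$ homeomorphic to $M_\sigma\times\RR^n$ for a suitable $n$, whose hyperplanes can be enumerated and controlled, and then to verify specialness after passing to a finite cover. Via Lemma~\ref{survivalCriterion}, the nondegenerate symmetric current $\{x_\delta\}$ produces a rational homology class $[s]\in H_1(F;\QQ)$ with $I([s],[z_e])=x_e\neq 0$; concretely, after clearing denominators, this yields a multicurve $\mu_v\subset F_v$ in each JSJ piece intersecting each boundary component $z_\delta$ algebraically $Nx_\delta$ times for a common large integer $N$.

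First I would cubulate each local model $F_v\times S^1$. After choosing $\mu_v$ together with a transverse filling multicurve $\nu_v$, the complement $F_v\setminus(\mu_v\cup\nu_v)$ is a disjoint union of discs, so $F_v$ acquires a square complex structure whose horizontal edges are arcs of $\mu_v$ and whose vertical edges are arcs of $\nu_v$. Taking the product with $S^1$ equips $J_v$ with a natural cube complex structure in which the Seifert fibers provide a third hyperplane family. The role of the factor $\RR^n$, with $n$ bounded by $|\vrtx(\cfggraph)|-2$, is to supply additional coordinate directions that let us embed the locally defined squared gluings into a globally consistent ambient complex, roughly one extra $\RR$ per vertex beyond the two bipartition classes.

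Second, I would assemble the local models along the JSJ tori combinatorially. Each torus $T_\delta$ inherits two square structures, one from each adjacent piece, related by a shear whose slope is determined by $b_\delta$. The vertex current equations $\sum_{\delta\in\widetilde\edge(v)}x_\delta=0$ guarantee that the transverse multicurves close up around each vertex, and the cycle equations $\sum_{\delta\in\widetilde\edge(c)}b_\delta x_\delta=0$ guarantee that the vertical translations induced by shears around cycles cancel. Together these upgrade the piecewise structures to a well-defined combinatorial gluing after passing to a finite cyclic cover that realizes the common multiple $N$; the extra $\RR^n$ coordinates absorb the slack needed for this gluing to be a cubical isometric embedding rather than a mere combinatorial map.

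Finally, to establish virtual specialness I would pass to a further finite regular cover and verify the four conditions of Definition~\ref{spCubeCplx}. Two-sidedness of the horizontal hyperplanes follows from the bicoloring of $\cfggraph$ combined with the symmetry $x_{\bar\delta}=-x_\delta$; two-sidedness of the vertical Seifert hyperplanes is automatic. Embeddedness and non-self-osculation within a single piece can be arranged by choosing $\mu_v,\nu_v$ generically and unwrapping in a characteristic cover of $F_v$. The main obstacle, as I see it, will be controlling inter-osculation at the JSJ tori, since two hyperplanes that meet transversely in one piece must not osculate at a boundary torus shared with a neighbouring piece; here the $\RR^n$ factor plays its essential role by providing independent coordinate directions that separate otherwise osculating pairs, and the verification will hinge on the explicit combinatorial form of the gluing shears derived from $\{x_\delta\}$ together with a refinement of the finite cover to kill any residual osculations.
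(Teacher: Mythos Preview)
Your outline has the right large-scale shape, but the mechanism at the heart of the construction is missing, and without it the gluing step does not go through. If you cubulate $J_v$ literally as $F_v\times S^1$, the boundary torus $T_\delta$ inherits a square structure with edge-directions $[z_\delta],[f_v]$; on the other side it has edge-directions $[z_{\bar\delta}],[f_{v(\bar\delta)}]$, and since $\phi_{\delta*}[f_\delta]=[f_{\bar\delta}]+b_\delta[z_{\bar\delta}]$ these two square structures are genuinely different and cannot be glued cubically. Saying that the $\RR^n$ factor ``absorbs the slack'' is not a construction. What the paper does is replace the single $S^1$ direction by $n{+}1$ coordinate directions coming from cohomology: starting from a single $\sigma_*$-invariant class $\bar\xi\in H^1(F)$ dual to the current, one takes all lifts $\xi^1,\ldots,\xi^{n+2}\in H^1(M_\sigma)$ such that for each $v$ exactly one $\xi^{i(v)}$ kills the fiber $[f_v]$, and then defines $X_v$ as the quotient of $\widehat{F}_v\times\RR^{n+1}_{(i(v))}$ by $\pi_1(J_v)$ acting via deck transformations on the first factor and via the vector $\vec\xi_{(i(v))}$ of translations on the second. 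The point is that on the boundary torus both sides now see the \emph{same} quotient of $\RR^{n+2}$ by $\pi_1(T_e)$ acting through $\vec\xi$, so the gluing is the identity and is automatically cubical. This also explains the integer $n$: it is the number of distinct values of $\xi^0([f_v])$ minus $2$, not ``one per vertex beyond the bipartition.''

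Your specialness argument is also too soft. In the paper the hyperplanes come in a cut-type (dual to a pants decomposition refining the JSJ) and $n{+}2$ bind-types (one per $\xi^j$); the real pathologies are self-osculation of cut-types at octagon vertices, self-osculation of bind-types, and inter-osculation between cut- and bind-types. These are eliminated in two steps: first a finite cyclic cover dual to $\xi^j$ modulo the lcm of the divisibilities $\mathrm{div}(\xi^j_z)$, which makes each index-$j$ bind-hyperplane map to the pants-decomposition graph $\cutgraph$ by an \emph{immersion}; then, using that $\pi_1(\cutgraph)$ is free and hence LERF, a further graph-induced finite regular cover promoting those immersions to \emph{embeddings}. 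Generic position of $\mu_v,\nu_v$ and characteristic covers of $F_v$ will not, by themselves, control how a bind-hyperplane re-enters a piece after traveling around a cycle of $\cutgraph$; the LERF step is where that control comes from, and it is absent from your proposal.
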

	
	\begin{remark} 
		In fact, the special cube complex by our construction is complete
		with finitely many hyperplanes, but when $n>0$,
		it has no compact convex subcomplex homotopy equivalent to $\tilde{M}_\sigma$.
	\end{remark}
	
	We prove Proposition \ref{SpCubulation} in the rest of this section.
	Our cubulation is inspired by the Sageev construction.
	Generally speaking, from any finite collection of $\pi_1$-injective 
	surfaces, the Sageev construction allows 
	one to obtain a CAT(0) cube complex on which the fundamental group 
	of the $3$-manifold acts, (cf.~\cite{Ni}). This will be a proper action if one can find
	a `sufficient' collection, and the virtual specialness will be
	verified if one can show the separability and the double-coset
	separability of	the surface subgroups. This approach has been taken by
	Piotr Przytycki and Dani Wise recently, (\cite{PW-graph}). 
	Instead of that, in this paper,	
	we shall present an explicit construction, through which
	the virtual specialness can be seen directly.

	Throughout this section, 
	we suppose $(F,\sigma)$ satisfies the assumption of Proposition
	\ref{SpCubulation}. We shall write the configuration graph of
	as $(\cfggraph,\{\chi_v\},\{b_e\})$, and the essential curves on $F$ defining
	$\sigma$ as $z_e$'s. Regarding the mapping torus $M_\sigma$ as $F\times[0,1]$ 
	identifying $(x,0)$ with $(\sigma(x),1)$ for every $x\in F$.
	there is a natural inclusion:
		$$F\subset M_\sigma,$$
	identifying $F$ as the copy $F\times\{0\}$ in $M_\sigma$.
	The Poincar\'{e} dual of $[F]\in H_2(M_\sigma)$ will be written as:
		$$\alpha_F\in H^1(M_\sigma).$$
		
	\subsection{The cut-bind system $(\cuts,\binds)$}\label{Subsec-setupData} 
		In this subsection, we choose some setup data for
		the construction of the cubulation.
		It will be described as a cut-bind system $(\cuts,\binds)$
		of curves with respect to $(F,\sigma)$, which naturally induces
		a cohomology class $\bar\xi\in H^1(F)$ and a finite collection
		of lifted cohomology
		classes $\xi^1,\cdots,\xi^{n+2}\in H^1(M_\sigma)$ for
		some $0\leq n\leq |\vrtx(\cfggraph)|-2$.
		
		\begin{definition}\label{subordinate} 
			Suppose $\cuts\subset F$ is a collection of mutually disjoint 
			simple closed curves so that $\pants=F-\cuts$ is a disjoint union of
			(compact) pairs-of-pants.
			We say the pants decomposition $\pants=F-\cuts$ is 
			\emph{subordinate to} $(F,\sigma)$ if $z_e$ is a component of $\cuts$ for every
			$e\in\edge(\cfggraph)$. The components of $\cuts$ are called the \emph{cut-curves}.
		\end{definition}
		
		\begin{definition}\label{cbs} 
			Suppose $\pants=F-\cuts$ is a pants decomposition of $F$
			subordinate to $(F,\sigma)$, and $\binds\subset F$
			is a union of oriented mutually-disjoint simple closed curves, 
			representing a cohomology class $\bar\xi\in H^1(F)$ in Poincar\'{e} dual.
			If for every cut curve $z\in\cuts$, $\binds\cap z$ is nonempty consisting
			of exactly $|\bar\xi([z])|$ points, and if
			for every pair-of-pants $P\subset\pants$, $\binds\cap P$ is a union of disjoint arcs,
			then we call $(\cuts,\binds)$ a \emph{cut-bind system} with respect to
			$(F,\sigma)$. The components of $\binds$ are called the \emph{bind-curves}.
		\end{definition}
		
		\begin{lemma}\label{pantsDecomposition}
			With the notations above, there is a pants decomposition 
			$\pants=F-\cuts$ subordinate to 
			$(F,\sigma)$, so that every cut-curve $z\subset\cuts$ represents a nontrivial 
			element (up to sign) in $H_1(M_\sigma;\QQ)$. In particular, every cut-curve  $z
			\subset\cuts$ is non-separating on $F$, adjacent to two distinct pairs-of-pants of
			$\pants$.
		\end{lemma}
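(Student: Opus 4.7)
The plan is to start with the collection $\cuts_0 := \{z_e\}_{e\in\edge(\cfggraph)}$ and extend it to a pants decomposition of $F$ by adding interior curves in each JSJ piece $F_v$. Under the running hypothesis of Section~\ref{Sec-SpCubulation}, Lemma~\ref{survivalCriterion} ensures that every $[z_e]$ is nontrivial in $H_1(M_\sigma;\QQ)$; since the natural map $H_1(F;\QQ)\to H_1(M_\sigma;\QQ)$ kills only $\mathrm{image}(1-\sigma_*)$, this forces $[z_e]\neq 0$ in $H_1(F;\QQ)$, so $z_e$ is non-separating in $F$. Bipartiteness of $\cfggraph$ places the two sides of $z_e$ in distinct $F_v$'s, hence in distinct pants in any refinement. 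Finally, nondegeneracy of the current together with the vertex equation $\sum_{\delta\in\widetilde\edge(v)}x_\delta=0$ excludes valence-one vertices, so every $F_v$ has at least two boundary components.

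For any curve $c$ in the interior of some $F_v$, disjointness from all $z_e$'s yields $\sigma_*[c]=[c]$, placing $[c]\in\ker(1-\sigma_*)$. As $\sigma$ is a multitwist, $\mathrm{image}(1-\sigma_*)\subseteq\mathrm{span}\{[z_e]\}$ in $H_1(F;\QQ)$; thus a sufficient condition for $[c]$ to be nontrivial in $H_1(M_\sigma;\QQ)$ is that $[c]$ pair nontrivially with the suspension $[S]\in H_2(M_\sigma;\QQ)$ of the current class $[s]\in H_1(F;\QQ)$ produced in the proof of Lemma~\ref{survivalCriterion}. When $F_v$ has positive genus, I would inductively cut along non-separating simple closed curves of $F_v$ whose classes are linearly independent from the boundary classes, reducing to a planar remainder; each such curve has $[c]\not\in\mathrm{span}\{[z_e]\}$ and is thus nontrivial in $H_1(M_\sigma;\QQ)$, and its two sides can be kept in distinct pants by any loopless continuation.

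The main obstacle is the planar case ($g_v=0$, $b_v\geq 4$), where every interior curve $c$ is separating in $F_v$ and $[c]=\pm\sum_{e\in E_A}[z_e]$ for some partition $\partial F_v=E_A\sqcup E_B$, landing inside $\mathrm{span}\{[z_e]\}$; the sufficient condition above fails, and I would instead check nontriviality directly via the pairing $I([S],[c])=\pm\sum_{e\in E_A}x_{\delta(v,e)}$. The vertex equation $\sum_{\delta\in\widetilde\edge(v)}x_\delta=0$ together with the nondegeneracy $x_\delta\neq 0$ allows one to greedily linearly order the boundary components $z_{e_1},\ldots,z_{e_{b_v}}$ of $F_v$ so that every proper initial partial sum $\sum_{i\leq k}x_{\delta(v,e_i)}$ is nonzero: at each step one must avoid a single forbidden next value, and all remaining elements being equal to that value forces that step to be the final one (a short counting argument using $x_\delta\neq 0$). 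The corresponding nested, tree-like pants decomposition of $F_v$ then has all interior curves pairing nontrivially with $[S]$, hence nontrivial in $H_1(M_\sigma;\QQ)$, and is automatically loopless in the dual graph. Taking $\cuts$ to be $\cuts_0$ together with the interior curves produced in each $F_v$ gives the desired pants decomposition.
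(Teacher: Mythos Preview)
Your greedy argument for the planar case is correct and is the heart of your approach, but the proposal leaves the positive-genus case incomplete. After your genus cuts reduce $F_v$ to a planar remainder, that remainder has boundary components of two kinds: the original $z_e$'s (with $x_\delta\neq0$) and the pairs $c_i^\pm$ coming from the genus cuts (for which $I([s],[c_i])$ may well vanish). You still need to decompose this remainder into pants, and you do not say how. If you intend to rerun your greedy ordering on the enlarged boundary set, you should note explicitly that the greedy survives the presence of zero entries: the counting argument only needs $S_{k-1}\neq0$ and $\sum x = 0$, so it suffices that \emph{some} boundary component has nonzero $x$-value, which the original $z_e$'s provide. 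Without this remark the transition from ``reducing to a planar remainder'' to the paragraph that begins ``The main obstacle is the planar case ($g_v=0$, \ldots)'' reads as a gap.

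The paper's proof is both shorter and more uniform. It runs a single induction on the evolving collection $\cuts_k$, working directly in $H_1(M_\sigma;\QQ)$ rather than through the current values $x_\delta$. If some component $C$ of $F-\cuts_k$ is non-planar, any non-separating curve in $C$ has a dual curve in $C$, hence survives in $H_1(M_\sigma;\QQ)$. If $C$ is planar with at least four boundary components, all of which are nontrivial in $H_1(M_\sigma;\QQ)$ by the inductive hypothesis, a two-line pigeonhole shows some pair $z',z''$ has $[z']+[z'']\neq0$; one then separates off that pair. This avoids the current $[s]$ entirely and sidesteps the issue of mixed boundary data. Your route, by contrast, produces an explicit linear (caterpillar-shaped) pants decomposition of each piece keyed to the current---more constructive, but heavier, and it requires the patch above to close.
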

		
		\begin{proof} 
			Let $\cuts_0$ be the union of the $z_e$'s. 
			By Lemma \ref{survivalCriterion}, as the current equations associated to 
			$(F,\sigma)$ have a nondegenerate solution, each $z_e$ survives in $H_1(M_\sigma)$.
			Suppose by induction that for some $k\geq0$,
			$\cuts_k\subset F$ is a union of mutually non-parallel, disjoint essential simple closed curves
			that has been constructed, so that every component $z\subset\cuts$ survives in $H_1(M_\sigma;\QQ)$.
			
			Whenever $F-\cuts_k$ is not a disjoint union of pairs-of-pants, we construct $\cuts_{k+1}$ as follows.  
			If there is a non-planar component of $C$ of $F-\cuts_k$, let $z\subset C$ be a non-separating simple
			closed curve. It is clear that $z$ is rational-homologically nontrivial in $M_\sigma$, so
			we take $\cuts_{k+1}=\cuts_k\cup\{z\}$. Otherwise, there is still a component 
			planar $C$ of $F-\cuts_k$
			which has at least four boundary components. With the orientation induced from that of $C$, at least two
			boundary components $z',z''\subset\partial C$ such that $[z']+[z'']\neq0$ in $H_1(M_\sigma;\QQ)$.
			Let $z\subset C$ be a simple closed curve separating $z'\cup z''$ from the other components of $\partial C$,
			so $z$ is rational-homologically nontrivial in $M_\sigma$. We take $\cuts_{k+1}=\cuts_k\cup\{z\}$.
			
			This process terminates after finitely many steps. In the end, we get a pants decomposition
			$\pants=F-\cuts$ as desired.
		\end{proof}
		
		Fix a pants decomposition $\pants=F-\cuts$ subordinate
		to $(F,\sigma)$ as asserted by Lemma \ref{pantsDecomposition}.
		
		\begin{lemma}\label{barXi} 
			With the notations above, there 
			is a cohomology class $\bar\xi\in H^1(F)$ invariant under $\sigma_*$,
			such that $\bar\xi([z])$ is nontrivial for any cut-curve
			$z\subset\cuts$.
		\end{lemma}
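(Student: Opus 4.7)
The plan is to identify the $\sigma_*$-invariant subspace $H^1(F;\QQ)^\sigma$ with a space of functionals on $H_1(M_\sigma;\QQ)$ and then use a standard finite-hyperplane-avoidance argument. Viewing $H^1(F;\QQ)$ as $\Hom(H_1(F;\QQ),\QQ)$, a class $\bar\xi$ is $\sigma^*$-invariant precisely when $\bar\xi(\sigma_*\alpha)=\bar\xi(\alpha)$ for every $\alpha\in H_1(F;\QQ)$, i.e.\ when $\bar\xi$ annihilates $(\sigma_*-\id)H_1(F;\QQ)$. By the Wang long exact sequence for the mapping torus $F\hookrightarrow M_\sigma\to S^1$, this image is exactly the kernel of the inclusion $\iota_*:H_1(F;\QQ)\to H_1(M_\sigma;\QQ)$. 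Consequently the pairing
\[
H^1(F;\QQ)^\sigma\,\times\,\mathrm{Im}(\iota_*)\,\longrightarrow\,\QQ,\qquad (\bar\xi,\iota_*\alpha)\mapsto \bar\xi(\alpha),
\]
is well defined and nondegenerate on the right factor.

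Now Lemma \ref{pantsDecomposition} supplies precisely the input we need: for every cut-curve $z\subset\cuts$, the class $\iota_*[z]\in H_1(M_\sigma;\QQ)$ is nonzero. By the nondegeneracy above, the subspace
\[
H^1(F;\QQ)^\sigma_z\,=\,\{\,\bar\xi\in H^1(F;\QQ)^\sigma\,:\,\bar\xi([z])=0\,\}
\]
is a proper linear subspace of $H^1(F;\QQ)^\sigma$. Since $\cuts$ is finite and $\QQ$ is infinite, the finite union $\bigcup_{z\in\cuts}H^1(F;\QQ)^\sigma_z$ cannot exhaust $H^1(F;\QQ)^\sigma$; pick a rational class $\bar\xi_\QQ$ in the complement (equivalently, take a generic linear combination of witness classes $\bar\xi_z$, one for each $z$). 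Clearing denominators produces an integral class $\bar\xi\in H^1(F;\ZZ)^\sigma$ with $\bar\xi([z])\neq 0$ for every cut-curve, as required.

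There is no real obstacle here: the content of the lemma is hidden in Lemma \ref{pantsDecomposition}, where the existence of the nondegenerate symmetric solution of the current equations was used (via Lemma \ref{survivalCriterion}) to guarantee that the curves $z_e$, and then inductively all completed cut-curves, survive in $H_1(M_\sigma;\QQ)$. Once that homological survival is in hand, the construction of $\bar\xi$ is a short exercise in linear algebra built on the Wang sequence and hyperplane avoidance. The only minor subtlety worth mentioning is that $\sigma_*$ preserves the integer lattice in $H_1(F)$, so $H^1(F;\ZZ)^\sigma$ is a full lattice in $H^1(F;\QQ)^\sigma$, which makes the final integralization step legitimate.
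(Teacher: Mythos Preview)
Your proof is correct and follows essentially the same route as the paper: the paper simply chooses a generic $\xi^0\in H^1(M_\sigma;\QQ)$ not vanishing on any $[z]$ (using Lemma~\ref{pantsDecomposition}) and restricts it to $F$, which is exactly your Wang-sequence identification $H^1(F;\QQ)^\sigma\cong\Hom(\mathrm{Im}\,\iota_*,\QQ)$ followed by hyperplane avoidance, spelled out in more detail. Your added remarks on nondegeneracy and integralization are accurate and just make explicit what the paper leaves as ``straightforward to check.''
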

		
		\begin{proof} 
			As the pants decomposition $\pants=F-\cuts$
			satisfies the conclusion of Lemma \ref{pantsDecomposition}, 
			we may find a generic $\xi^0\in H^1(M_\sigma;\QQ)$, so that $\xi^0([z])\neq0$ for any cutting
			curve $z\subset\cuts$.
			After passing to a multiple, we may assume $\xi^0\in H^1(M_\sigma)$ as well.
			Take $\bar\xi\in H^1(F)$ to be the restriction of $\xi^0$ under the natural
			inclusion $F\subset M_\sigma$. It is straightforward to check that $\bar\xi$ satisfies the
			conclusion.
		\end{proof}
		
		Fix a cohomology class $\bar\xi\in H^1(F)$ as asserted by Lemma \ref{barXi}.
		
		\begin{lemma}\label{cbsExist} 
			With the notations above,
			there is a cut-bind system $(\cuts,\binds)$ with respect to $(F,\sigma)$ such that
			$[\binds]\in H_1(F)$ is in Poincar\'{e} dual to $\bar\xi\in H^1(F)$.
		\end{lemma}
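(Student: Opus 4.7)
The plan is to build $\binds$ pants-by-pants. For each pair of pants $P \subset \pants$, the restriction $\bar\xi|_P \in H^1(P)$ corresponds via Poincar\'{e}--Lefschetz duality to a class in $H_1(P,\partial P)$, which I would represent by an oriented proper arc system $A_P \subset P$ in minimal position. Then $A_P$ meets each boundary cut-curve $z \subset \partial P$ transversely in exactly $|\bar\xi([z])|$ points, and these endpoints on $z$ all share a common orientation-sign determined by the sign of $\bar\xi([z])$ and the induced boundary-orientation of $z$ in $\partial P$.

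Next, I would glue the $A_P$'s together along $\cuts$ to produce $\binds$. By Lemma \ref{pantsDecomposition}, each cut-curve $z \in \cuts$ bounds two distinct pairs-of-pants $P$ and $P'$; the orientation-signs of the $|\bar\xi([z])|$ endpoints on $z$ coming from $A_P$ and from $A_{P'}$ are opposite, since $z$ is induced oppositely as a boundary of $P$ versus $P'$. This is exactly the compatibility needed to join endpoints smoothly through $z$. I would pair up the endpoints across $z$ via a cyclic, non-interleaving bijection and locally smooth the resulting joins. The result is a disjoint union of oriented simple closed curves $\binds \subset F$ such that $\binds \cap P = A_P$ consists only of arcs and $|\binds \cap z| = |\bar\xi([z])|$ for each $z \in \cuts$.

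To verify $[\binds] = \mathrm{PD}(\bar\xi) \in H_1(F)$, I would check that the algebraic intersection $[\binds] \cdot [\gamma]$ agrees with $\bar\xi([\gamma])$ for $[\gamma]$ in a basis of $H_1(F)$. For a cut-curve $[z]$ this is immediate from the sign-uniformity of the endpoints on $z$, and for a closed curve $\gamma$ transverse to $\cuts$ it follows from summing local intersection contributions pants-by-pants using the duality $[A_P] = \mathrm{PD}(\bar\xi|_P)$ within each piece.

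The main obstacle will be ensuring that $\binds$ is a disjoint union of embedded simple closed curves with no accidental self-crossings at the cut-curve joins. This is handled by the orientation-sign consistency just noted: since all endpoints from one side of $z$ share a common sign, a cyclic rotation pairing produces a clean, non-crossing gluing, and the arc orientations combine automatically into consistent orientations on the resulting closed curves.
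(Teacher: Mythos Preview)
Your construction of $\binds$ is essentially the paper's: build an arc system $A_P$ in each pair-of-pants dual to $\bar\xi|_P$, then glue across the cut-curves. The gap is in your verification that $[\binds]=\mathrm{PD}(\bar\xi)$.

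The check on cut-curves $[z]$ is fine. The check on a closed curve $\gamma$ transverse to $\cuts$, however, does not go through as you describe. In each pair-of-pants $P$, both $[A_P]$ and $[\gamma\cap P]$ lie in $H_1(P,\partial P)$, and there is no well-defined intersection pairing $H_1(P,\partial P)\times H_1(P,\partial P)\to\ZZ$; the geometric intersection number of two proper arc systems in $P$ is \emph{not} determined by their relative homology classes alone. Concretely: if you replace your cyclic bijection at a single cut-curve $z$ by a different cyclic shift, every $A_P$ stays the same but the resulting $[\binds]\in H_1(F)$ changes by a multiple of $[z]$. Thus an arbitrary gluing yields
\[
[\binds]\;=\;\mathrm{PD}(\bar\xi)\;+\;\sum_{z\subset\cuts} c_z\,[z]
\]
for some integers $c_z$. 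This discrepancy is invisible when pairing against cut-curves (since $[z]\cdot[z']=0$ for all $z,z'\subset\cuts$), which is why your first check succeeds, but it shows up for curves crossing $\cuts$.

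The paper acknowledges exactly this ambiguity and then fixes it: one adjusts the matching by shifting the embedding of each $P$ into $F$ near $\partial P$ (equivalently, composing the gluing at each $z$ with an appropriate power of the Dehn twist along $z$) so as to kill the coefficients $c_z$. You need to add this correction step; once you do, the rest of your argument is correct.
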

		
		\begin{proof} 
			Let $P\subset\pants$ be a pair-of-pants. For simplicity, 
			we write $v$ for the vertex $v(P)\in\vrtx(\cfggraph)$
			carrying a pair-of-pants $P\subset\pants$, 
			and write $i$ for the index $i(v)\in\{1,2,\cdots,n+2\}$.
			Note that the value of the cohomology class $\bar\xi$ on the three components $z,z',z''$
			of $\partial P$ (with the induced orientations) are three nonzero integers
			$m,m',m''$, respectively, so that $m+m'+m''=0$. Without loss of generality, we assume
			$m'$ and $m''$ has the same sign, and $m$ has the opposite sign. Thus
			one may take $|m'|$ parallel arcs
			from $z$ to $z'$, and $|m''|$ parallel arcs from $z$ to $z''$, so that they are mutually disjoint,
			cutting $P$ into one octagon and $|m|-2$ bands. See
			Figure \ref{figCubPants2D} for an illustration with 
			$m=5$, $m'=-2$, and $m''=-3$, in which the bind-arcs
			are colored green.
			
			\begin{figure}[htb]
				\centering
				\includegraphics[scale=1.2]{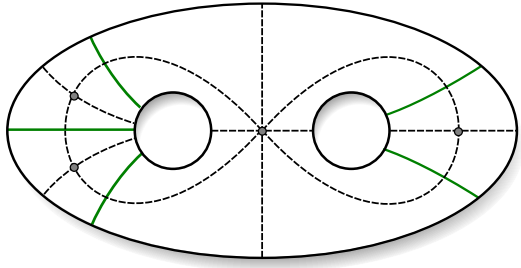}
				\caption{Bind-arcs in a pair-of-pants and the dual cubulation.}\label{figCubPants2D}
			\end{figure}
			
			There is a canonical orientation
			for these arcs so that their union is dual to the restriction $\bar\xi|_P\in H^1(P)$.
			These parallel arcs will later 
			serve as the bind-arcs $\binds\cap P$. Moreover, we glue up these $P$'s according to
			the pants decomposition, or more precisely, embed $P$ into 
			$F$, so that the bind-arcs match up along the boundary, giving
			rise to bind-curves $\binds\subset F$. Arbitrary matching up may
			result in some $[\binds]\in H_1(F)$ which differs from the dual of $\bar\xi\in H^1(F)$ by an
			integral linear combination of the $[z]$'s, where $z\subset\cuts$ are the cut-curves.
			However, we may modify the matching by shifting the embedding
			of $P$'s into $F$ near $\partial P$, so that $[\binds]$
			is the Poincar\'{e} dual of $\bar\xi$, as desired.
		\end{proof}
		
		Fix a cut-bind system $(\cuts,\binds)$ with respect to $(F,\sigma)$ as asserted by Lemma \ref{cbsExist}.
		
		\begin{lemma}\label{cohomologyClasses} 
			With the notations above, there are canonical integral cohomology classes:
				$$\xi^1,\cdots,\xi^{n+2}\in H^1(M_\sigma),$$
			for some $0\leq n\leq |\vrtx(\cfggraph)|-2$, such that  
			each $\xi^j$ restricts to $\bar\xi\in H^1(F)$ under the natural inclusion
			$F\subset M_\sigma$, and that for every vertex $v\in\vrtx(\cfggraph)$,
			there is exactly one $\xi^i$ with $\xi^i([f_v])=0$.
		\end{lemma}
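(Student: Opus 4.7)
The plan is to realize the $\xi^i$'s as the parallel integral lifts of $\bar\xi$, indexed by the distinct evaluations of an initial lift on the various Seifert fibers. Since $\bar\xi$ is $\sigma^*$-invariant, the Wang sequence
$$0\,\to\,H^1(S^1;\ZZ)\,\to\,H^1(M_\sigma;\ZZ)\,\to\,H^1(F;\ZZ)\,\xrightarrow{\sigma^*-\id}\,H^1(F;\ZZ)$$
of the surface bundle $F\hookrightarrow M_\sigma\to S^1$ provides an integral lift $\xi^0\in H^1(M_\sigma)$ with $\xi^0|_F=\bar\xi$, unique up to translation by the Poincar\'e dual $\alpha_F\in H^1(M_\sigma)$ of $[F]$. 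Two properties of $\alpha_F$ will be used repeatedly: $\alpha_F|_F=0$, and $\alpha_F([f_v])=1$ for every vertex $v\in\vrtx(\cfggraph)$, because $F$ meets each Seifert fiber $f_v$ transversally at a single point.

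Next I would set $k_v:=\xi^0([f_v])\in\ZZ$ for each vertex and $V:=\{k_v\,:\,v\in\vrtx(\cfggraph)\}\subset\ZZ$. Taking $\{\xi^1,\ldots,\xi^{n+2}\}$ to be the set $\{\xi^0-c\,\alpha_F\,:\,c\in V\}$ with $n+2=|V|$, the restriction $\xi^i|_F=\bar\xi$ is automatic, and $\xi^i([f_v])=k_v-c$ vanishes if and only if $c=k_v$, so for each vertex $v$ exactly one $\xi^i$ (the one indexed by $c=k_v$) kills $[f_v]$. Canonicity is immediate: shifting $\xi^0\mapsto\xi^0+m\,\alpha_F$ shifts $V$ uniformly by $m$ and yields the same unordered set.

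The remaining step is the inequality $2\leq n+2\leq|\vrtx(\cfggraph)|$. The upper bound follows from $|V|\leq|\vrtx(\cfggraph)|$. For the lower bound $|V|\geq 2$, I would exploit the cut-curve property from Lemma~\ref{barXi}. The bipartite assumption forbids loop edges in $\cfggraph$, and the standing assumption of the section forces $\cfggraph$ to be connected with at least one edge $e$, so its endpoints $u,v\in\vrtx(\cfggraph)$ are distinct. The Waldhausen gluing formula $\phi_{\delta*}[f_\delta]=[f_{\bar\delta}]+b_\delta[z_{\bar\delta}]$ on the JSJ torus $T_e$ implies the homological relation $[f_u]-[f_v]=\pm b_e[z_e]$ in $H_1(M_\sigma)$. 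Evaluating $\xi^0$ on both sides yields
$$k_u\,-\,k_v\,=\,\pm\,b_e\,\bar\xi([z_e])\,\neq\,0,$$
using $b_e\neq 0$ and $\bar\xi([z_e])\neq 0$ from Lemma~\ref{barXi}. Hence adjacent vertices receive distinct values and $|V|\geq 2$.

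The hard step is precisely this lower bound $|V|\geq 2$; without it one could have $n=-1$ and no admissible family at all. It relies critically on the genericity of $\bar\xi$ built into Lemma~\ref{barXi} (namely $\bar\xi([z_e])\neq 0$ for every cut-curve) together with the nonvanishing of the twist multiplicities $b_e$ from the definition of a multitwist.
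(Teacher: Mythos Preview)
Your argument is correct and follows the same construction as the paper: lift $\bar\xi$ to some $\xi^0\in H^1(M_\sigma)$, observe that the full affine line of lifts is $\xi^0+\ZZ\,\alpha_F$, and take for the $\xi^i$'s exactly those lifts $\xi^0-\xi^0([f_v])\,\alpha_F$ as $v$ ranges over $\vrtx(\cfggraph)$. The paper's proof is essentially this, stated a bit more tersely and without invoking the Wang sequence by name.

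Your write-up is in fact more complete than the paper's on one point: the paper never verifies the lower bound $n\geq 0$ (equivalently $|V|\geq 2$), whereas you supply it via the gluing relation $[f_u]-[f_v]=\pm b_e[z_e]$ and the nonvanishing $\bar\xi([z_e])\neq 0$ from Lemma~\ref{barXi}. This lower bound is not cosmetic; it is exactly what makes the translation vector $\vec\xi_{(i(v))}([f_v])$ nontrivial in Lemma~\ref{X_Pproduct}, so that the $\pi_1(J_v)$-action on $\widehat{X}_v$ is proper. Your observation that adjacent vertices always receive distinct values $k_v$ is a genuine strengthening worth recording.
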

		
		\begin{proof} 
			As $\bar\xi\in H^1(F)$ is invariant under $\sigma_*$, it comes from the
			restriction of some $\xi^0\in H^1(M_\sigma)$, and the space of such cohomology classes
			is exactly $\xi^0\,+\,\ZZ\,\alpha_F$. For any vertex $v\in\vrtx(\cfggraph)$, 
			and any $\xi(t)=\xi^0+t\,\alpha_F$, we have $\xi([f_v])=0$ if and only if 
			$t$ equals $-\xi^0([f_v])$, since $\alpha_F([f_v])=1$.
			We may take $\xi^1,\cdots,\xi^{n+2}$ to be all the $\xi(t)$'s, where
			$t=-\xi^0([f_v])$ as $v$ runs over $\vrtx(\cfggraph)$. This is the canonically
			(ordered) finite set of cohomology classes.
		\end{proof}
		
		Let $\xi^1,\cdots,\xi^{n+2}\in H^1(M_\sigma)$ be the cohomology
		as asserted by Lemma \ref{cohomologyClasses}. 
		It will be convenient to write: 
			$$i(v)\in\{1,2,\cdots,n+2\},$$
		for the unique index associated to $v\in\vrtx(\cfggraph)$.
		We often call $i(v)$ the \emph{vertical index} with respect to $v$,
		and call any other index $j$ a \emph{horizontal index} with respect to $v$.

	\subsection{The cube complex $X$}\label{Subsec-cubCplxX}
		In this subsection, we construct a canonical cube complex $X$
		provided a choice of the cut-bind system $(\cuts,\,\binds)$ 
		with respect to $(F,\sigma)$, as can be
		obtained by Subsection \ref{Subsec-setupData}.
		
		Let $\bar\xi\in H^1(F)$ and $\xi^1,\cdots,\xi^{n+1}\in H^1(M_\sigma)$
		be the canonically associated cohomology classes as before.
		We also have a natural cubulation of $F$ dual to $(\cuts,\binds)$,
		namely, the cubulation 
		obtained by matching up the dual cubulation on each pair-of-pants.
		See Figure \ref{figCubPants2D} for an illustration, 
		where the dots and the dashed arcs are 
		the vertices and the edges in the dual cubulation, respectively.
		Note that the boundary of the pair-of-pants is regarded
		as a union of mid-cubes.  
		
		\medskip\noindent\textbf{Step 1. Cubulate $J_v\times\RR^n$.} 
		For every vertex $v\in\vrtx(\cfggraph)$,
		the JSJ piece $J_v$ is the mapping torus of $\sigma$ restricted to $F_v\subset F$. 
		It has a canonical Seifert fibered structure given by the short exact sequence of groups:
			$$1\longrightarrow\pi_1(f_v)\longrightarrow\pi_1(J_v)\stackrel{p_v}\longrightarrow \pi_1(F_v)\longrightarrow 1,$$
		and a natural inclusion $F_v\subset J_v$. Hence $\pi_1(J_v)$ is a canonical direct product $\pi_1(F_v)\times\pi_1(f_v)$.
		
		Let $\widehat{F}_v$ be the universal covering of $F_v$ with the induced
		cubulation. 
		We parametrize $\RR^{n+2}$ by the real coordinates $\vec{\lambda}=(\lambda^1,\cdots,\lambda^{n+2})$, 
		and for each $1\leq i\leq n+2$, let $\RR^{n+1}_{(i)}$ be the subspace defined by $\lambda^i=0$. 
		We often denote points of $\RR^{n+1}_{(i)}$ by $\vec\lambda_{(i)}=(\lambda^1,\cdots,\widehat{\lambda^i},\cdots,
		\lambda^{n+2})$.
		The integral lattice of $\RR^{n+2}$ endows $\RR^{n+2}$, and each
		$\RR^{n+1}_{(i)}$, with a standard cubulation given by the tiling of standard unit $(n+2)$-cubes.
		Therefore, the fundamental group $\pi_1(J_v)$ acts on:
			$$\widehat{X}_v\,=\,\widehat{F}_v\times\RR^{n+1}_{(i(v))},$$ 
		via:
			$$g.(w,\vec\lambda_{(i(v))})\,=\,\left(p_v(g).w,\,\vec\lambda_{(i(v))}+\vec\xi_{(i(v))}([g])\right),$$
		for any $g\in\pi_1(J_v)$, and any $(w,\,\vec\lambda_{(i(v))})\in \widehat{X}_v$. Here $p_v(g)$
		acts by the deck transformations on $\widehat{F}_v$, and
		$\vec\xi_{(i(v))}([g])$ denotes $(\xi^1([g]),\,\cdots,\widehat{\xi^{i(v)}([g])},\cdots,\xi^{n+2}([g]))$, where
		$[g]\in H_1(J_v)$ is the corresponding homology class.
		
		\begin{lemma}\label{X_Pproduct}
			The action on $\pi_1(J_v)$ on the cube complex
			$\widehat{X}_v$ is proper and combinatorial.
			The quotient cube complex:
				$$X_v=\pi_1(J_v)\,\backslash\,\widehat{X}_v,$$
			is homeomorphic to $J_v\times\RR^n$.
			Moreover, there is a canonical isomorphism $\pi_1(X_v)\cong\pi_1(J_v)$
			induced by an $n$-plane bundle structure of $X_v$ over $J_v$.
		\end{lemma}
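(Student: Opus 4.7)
The plan is: first to verify the action on $\widehat{X}_v$ is combinatorial, free, and properly discontinuous; next to produce a $\pi_1(J_v)$-equivariant linear splitting of $\RR^{n+1}_{(i(v))}$ that separates the action of $\pi_1(F_v)$ from that of $\pi_1(f_v)$; and finally to read the homeomorphism $X_v \cong J_v \times \RR^n$ together with the claimed $n$-plane bundle structure off of this splitting.

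For the first part, combinatoriality is immediate: $\pi_1(F_v)$ acts on $\widehat{F}_v$ by cubical deck transformations, and each $\xi^j$ is integral, so $\vec\xi_{(i(v))}([g])$ is an integer vector and the translation of $\RR^{n+1}_{(i(v))}$ preserves its standard cubulation. For freeness, I argue that if $g$ fixes a point then $p_v(g)=1$ by the free action of $\pi_1(F_v)$ on $\widehat{F}_v$, so $g=f_v^k$; writing $\vec{b}:=(\xi^j([f_v]))_{j\neq i(v)}$, every entry of $\vec{b}$ is nonzero because $i(v)$ is the \emph{unique} index at which $\xi^{i(v)}([f_v])=0$, so $k\vec{b}=0$ forces $k=0$. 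A routine compactness argument then upgrades freeness to properness.

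For the decoupling step, I set $\vec{a}:=(1,\dots,1)\in\RR^{n+1}_{(i(v))}$. By Lemma \ref{cohomologyClasses} every $\xi^j$ restricts to $\bar\xi$ on $F$, so each $g_F\in\pi_1(F_v)$ translates $\RR^{n+1}_{(i(v))}$ along $\bar\xi([g_F])\vec{a}$, while $f_v^k$ translates along $k\vec{b}$. The canonical enumeration $\xi^j=\xi^0+t_j\alpha_F$ for pairwise distinct integers $t_j$, together with $\alpha_F([f_v])=1$, makes the entries of $\vec{b}$ pairwise distinct; in particular (for $n\geq 1$) $\vec{a}$ and $\vec{b}$ are linearly independent, so I will pick a linear complement $U$ of $\RR\vec{b}$ that contains $\vec{a}$. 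Under the splitting $\RR^{n+1}_{(i(v))}=\RR\vec{b}\oplus U$, the translation $\bar\xi([g_F])\vec{a}$ lies entirely in $U$ and the translation $k\vec{b}$ entirely in $\RR\vec{b}$, so the $\pi_1(F_v)$-action is concentrated on $\widehat{F}_v\times U$ and the $\pi_1(f_v)$-action is concentrated on $\RR\vec{b}$; the quotient therefore factors as
\[
X_v \,\cong\, \bigl[(\widehat{F}_v\times U)/\pi_1(F_v)\bigr] \,\times\, \bigl[\RR\vec{b}/\ZZ\vec{b}\bigr].
\]

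Finally, to identify the factors: the first is the total space of a flat $U$-bundle over $F_v$, which is a vector bundle over a CW complex and therefore topologically trivial, hence homeomorphic to $F_v\times\RR^n$; the second is $S^1$. Combined with the natural product structure $J_v\cong F_v\times S^1$ from Subsection \ref{Subsec-multitwist}, this yields $X_v\cong J_v\times\RR^n$, and projecting onto the $U$-factor descends to the asserted linear $\RR^n$-bundle $X_v\to J_v$, trivialized by the above decomposition; contractibility of the fiber gives $\pi_1(X_v)\cong\pi_1(J_v)$. The $n=0$ case, where the decomposition collapses, is handled by the same two-step quotient directly: the flat $\RR$-bundle over $F_v$ trivializes to $F_v\times\RR$, and quotienting the $\RR$-factor by $\pi_1(f_v)$ yields $F_v\times S^1=J_v$. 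The delicate point is the linear independence of $\vec{a}$ and $\vec{b}$, which is exactly what the canonical enumeration of the $\xi^j$'s in Lemma \ref{cohomologyClasses} is designed to supply.
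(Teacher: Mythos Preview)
Your argument is essentially correct and follows the same idea as the paper: split off the line $\RR\vec b$ spanned by $\vec\xi_{(i(v))}([f_v])$ and use a complementary $n$-plane $U$ to exhibit the bundle structure. The paper does this in one stroke---it foliates $\RR^{n+1}_{(i(v))}$ by affine copies of an arbitrary complement $U$ and observes that this descends to an $n$-plane bundle $X_v\to J_v$ whose structure group consists of translations, hence is contractible, hence the bundle is trivial. You take the slightly longer route of requiring $\vec a\in U$ so that the $\pi_1(F_v)$- and $\pi_1(f_v)$-actions decouple, yielding an explicit product $[(\widehat F_v\times U)/\pi_1(F_v)]\times S^1$; this buys you a concrete description but costs the extra verification that $\vec a$ and $\vec b$ are independent (and a separate $n=0$ case).

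There is one genuine slip in your justification: the sentence ``a vector bundle over a CW complex and therefore topologically trivial'' is false as stated---plenty of vector bundles over CW complexes are nontrivial. The correct reason your flat $U$-bundle over $F_v$ is trivial is exactly the one the paper invokes: its structure group lies in the translation group of $U$ (in fact in the line $\RR\vec a$), which is contractible, so the bundle admits a section and is therefore trivial. Alternatively you could note that $F_v$, being a compact surface with boundary, deformation retracts to a graph, over which any orientable real vector bundle is trivial; but either way you must say more than ``vector bundle over a CW complex.'' Once you fix this line, your proof is complete and parallel to the paper's.
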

		
		\begin{proof} 
			The stabilizer of $\pi_1(J_v)$ at any point of $\widehat{F}_v$ is the center
			$\pi_1(f_v)\cong\ZZ$. Since $f_v$ acts on $\RR^{n+1}_{(i(v))}$ as  
			a translation by $\vec\xi_{(i(v))}([f_v])$, which is nontrivial by Lemma \ref{cohomologyClasses},
			the action of $\pi_1(J_v)$ is proper on $\widehat{F}\times\RR^{n+1}_{(i(v))}$.
			It clearly preserves the cube complex structure as well.
			
			Moreover, let $V_v$ be the subspace of $\RR^{n+1}_{(i(v))}$ spanned by $\vec\xi_{(i(v))}([f_v])$ over $\RR$.
			Then the $n$-dimensional foliation of $\RR^{n+1}_{(i(v))}$ by the affine parallel copies of 
			any subspace complementary $U$ to $V_v$ induces 
			an (affine) $n$-plane bundle structure of $X_v$ over $J_v$. In fact, different choices of
			$U$'s result in naturally isomorphic bundles over $J_v$. This bundle is trivial because the structure group
			is translational, and it induces a canonical isomorphism $\pi_1(X_v)\cong\pi_1(J_v)$.
		\end{proof}
		
		For any end-of-edge $\delta\in\widetilde\edge(\cfggraph)$ adjacent to $v$, we write:
			$$\partial_\delta X_v\subset \partial X_v,$$
		for the boundary component corresponding to $\delta$. It is a cube complex formed by mid-cubes,
		and $\pi_1(\partial_\delta X_v)\cong \pi_1(T_\delta)$ under the natural isomorphism $\pi_1(X_v)\cong
		\pi_1(J_v)$. To understand the cubulation on $\partial_\delta X_v$,
		remember we have cubulated $z_\delta$ as the quotient of $\RR^1$ by a translation by $\xi^{i(v)}([z_\delta])$. Thus, the universal covering
		of $\partial_\delta X_v$ can be naturally regarded as $\RR^{n+2}$ up to integral translations,
		where $\RR^{n+2}\,\cong\,\RR^1\times\RR^{n+1}_{(i(v))}$ is identified via plugging $\RR^1$ as the $i(v)$-th factor. 
		Using this identification, $\partial_\delta X_v$ is the quotient of
		$\RR^{n+2}$ under the action of $\pi_1(T_e)$ via:
			$$g.\vec\lambda\,=\,\vec\lambda+\vec\xi([g]),$$
		for any $g\in\pi_1(T_\delta)$, and any $\vec\lambda\in\RR^{n+2}$.
		Here $\vec\xi([g])$ denotes $(\xi^1([g]),\,\cdots,\xi^{n+2}([g]))$.
		In particular, any 
		$z_\delta$ translates by $\vec\Delta(\bar\xi([z_\delta]))
		= (\bar\xi([z_\delta]),\,\cdots,\,\bar\xi([z_\delta]))$ along the diagonal
		of $\RR^{n+2}$,
		and any $f_v$ translates only in the subspace $\RR^{n+1}_{(i(v))}$.
		
		Because integral translations on the $\RR^{n+1}_{(i(v))}$ factor of $\widehat{X}_v=
		\widehat{F}_v\times\RR^{n+1}_{(i(v))}$ commute with the action
		of $\pi_1(J_v)$ on $\widehat{X}_v$, this induces a group of
		automorphisms of the cube complex $X_v$. It is an abelian group
		naturally isomorphic to the quotient of $\ZZ^{n+1}$ modulo $\vec\xi_{(i(v))}([f_v])$.
		We shall call these automorphisms \emph{combinatorial translations} of $X_v$.
		
		\medskip\noindent\textbf{Step 2. Cubulate $M_{\sigma}\times\RR^n$.} 
		It is almost clear how to glue up these $X_v$'s. In the previous step,
		we identified $\partial_\delta X_v$ up to combinatorial
		translations as the quotient
		of $\RR^{n+2}$ by the action of $\pi_1(T_\delta)$,
		and under the identification,
		the action of $\pi_1(T_\delta)$ and
		$\pi_1(T_{\bar\delta})$ coincide on $\RR^{n+2}$.
		This implies the cubulations on the boundary components
		corresponding to any pair of opposite ends match up. Therefore, we may glue up
		all $X_v$'s along the boundaries with respect to $\cfggraph$, 
		matching up the cube complex structures, to obtain 
		a cube complex:
			$$X=\bigcup_{v\in\vrtx(\cfggraph)}\,X_v,$$
		which is clearly complete.
		
		However, the cube complex $X$ is determined only up to
		combinatorial translations along the boundary
		at this point. To further 
		make the matching canonical, we need to specify a reference point
		on each component of each $\partial X_v$. For this purpose, let: 
			$$\bar{h}:\cfggraph\to F,$$
		be a lift of the configuration graph
		$\cfggraph$ into $F$, namely, such that $\bar{r}\circ \bar{h}:\cfggraph\to\cfggraph$ is 
		homotopic to the identity where
		$\bar{r}:F\to\cfggraph$ is the natural collapsing. This gives 
		a base-point $\bar\nu_v=\bar{h}(v)$ on each $F_v$, and a base-point $\bar\nu_e=\bar{h}(e)\cap z_e$
		on each $z_e$, and a path $\bar\gamma_\delta=\bar{h}(e(\delta))\cap F_{v(\delta)}$
		from $\bar\nu_{v(\delta)}$ to $\bar\nu_{e(\delta)}$ for each end-of-edge $\delta\in\widetilde{\edge}(\cfggraph)$, 
		where $v(\delta)\in\vrtx(\cfggraph)$ is the vertex adjacent to $\delta$, and $e(\delta)\in\edge(\cfggraph)$
		is the edge carrying $\delta$. We may also assume that the $\bar\nu_v$'s and 
		$\bar\nu_e$'s are vertices of the cubulated $F_v$, and hence they 
		miss the bind-curves $\binds$.
		By picking a direction of all the edges of $\cfggraph$, it makes sense to speak of the algebraic 
		intersection number of any $\bar\gamma_\delta$ with the bind-curves $\binds$, which will be {\it ad hoc}
		written as $\bar{\xi}(\bar\gamma_\delta)$.
		For every vertex $v\in\vrtx(\cfggraph)$, let:
			$$\tilde\nu_v\,=\,(\widehat\nu_v,\,\vec0),$$
		be the point in $\widehat{X}_v=\widehat{F}_v\times\RR^{n+1}_{(i(v))}$, where
		$\widehat\nu_v\in\widehat{F}_v$ is a lift of $\bar\nu_v\in F_v$.
		Thus the lift of $\bar{h}(\cfggraph)\cap F_v$ into $\widehat{F}_v$ based at $\widehat\nu_v$
		gives rise to the $\widehat\nu_{\delta}\in\widehat{F}_v$ for every end-of-edge $\delta\in\widetilde\edge(v)$ adjacent to $v$.
		Let:
			$$\tilde\nu_{\delta}\,=\left(\widehat\nu_\delta,\,\vec\Delta_{(i(v))}(\bar\xi(\bar\gamma_{\delta}))\right),$$
		be the further lift $\bar\nu_{e(\delta)}$ into $\widehat{X}_v$,
		where $\vec\Delta_{(i(v))}:\RR\to\RR^{n+1}_{(i(v))}$ is the diagonal inclusion, namely,
		$\vec\Delta_{(i(v))}(t)=(t,\cdots,t)$ for any $t\in\RR$.
		This naturally gives rise to a lift of $\bar{h}(\cfggraph)\cap F_v$ into $\widehat{F}_v\times\RR^{n+1}_{(i(v))}$ based
		at $\tilde\nu_v$, for instance, using the linear extension on the
		$\RR^{n+1}_{i(v)}$ factor. We denote the corresponding images in the quotient as:
		$$\nu_v\in X_v,\textrm{ and }\nu_\delta\in \partial_\delta X_{v(\delta)},$$ 
		which are all vertices in the cubulation.
		
		Glue up the $X_v$'s matching the reference points on the boundary. We denote the
		gluing by:
			$$\Phi_\delta: \partial_\delta X_{v(\delta)}\to\partial_{\bar\delta} X_{v(\bar\delta)},$$
		for every end-of-edge $\delta\in\widetilde\edge(\cfggraph)$,
		satisfying $\Phi_{\bar\delta}=\Phi_{\delta}^{-1}$, and $\Phi_\delta(\nu_\delta)=\nu_{\bar\delta}$.
		Thus we obtain a cube complex $X$,
		and in addition, a lifted $h:\cfggraph\to X$ such that $r\circ h:\cfggraph\to \cfggraph$ is the identity, where
		$r:X\to\cfggraph$ is the natural collapsing.
		
		\begin{lemma} 
			Different choices of $\bar{h}:\cfggraph\to F$
			and the $\widehat\nu_v$'s in $\widehat{F}$ yield the same cube complex
			$X$ up to combinatorial isometry.
		\end{lemma}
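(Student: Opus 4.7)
The plan is to show that any two choices of the auxiliary data $\bar{h}$ and $\widehat{\nu}_v$ produce cube complexes differing only in the gluings $\Phi_\delta$, and that these differences can be absorbed by combinatorial translations on the pieces $X_v$. Fix one choice as a baseline, and consider a second choice. On each $\widehat{X}_v = \widehat{F}_v \times \RR^{n+1}_{(i(v))}$, replacing $\widehat{\nu}_v$ by another lift of $\bar\nu_v$ differs by a deck transformation $g\in\pi_1(F_v)$; when one passes to the quotient $X_v$, this alters the reference points $\nu_\delta$ only by an integral translation in the $\RR^{n+1}_{(i(v))}$-factor, that is, by a combinatorial translation of $X_v$. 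Similarly, any two choices of $\bar{h}$ can be connected by a homotopy, and the effect on $\bar\xi(\bar\gamma_\delta)$ amounts to shifting this intersection number by integers each time the homotopy crosses a bind-curve; this shifts $\tilde\nu_\delta$ by a diagonal vector $\vec\Delta_{(i(v))}(k)$ in $\RR^{n+1}_{(i(v))}$, which again descends to a combinatorial translation of $X_v$.

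Next I would assemble the local discrepancies into a global combinatorial isometry. For each vertex $v\in\vrtx(\cfggraph)$, the prescribed shift of reference points $\nu_\delta$ for all $\delta\in\widetilde\edge(v)$ is implemented by a single combinatorial translation $\tau_v$ of $X_v$: this is possible because all such shifts arise from one common change of basepoint $\widehat{\nu}_v$ together with one common change of the path system $\bar{h}|_{\cfggraph\cap F_v}$, so they propagate consistently via the linear extension used to define the $\widehat{\nu}_\delta$'s. Having obtained the family $\{\tau_v\}_{v\in\vrtx(\cfggraph)}$, the remaining task is to verify compatibility with the gluings: on each shared boundary, the restrictions $\tau_{v(\delta)}|_{\partial_\delta X_{v(\delta)}}$ and $\Phi_\delta^{-1}\circ \tau_{v(\bar\delta)}|_{\partial_{\bar\delta} X_{v(\bar\delta)}}\circ \Phi_\delta$ must agree, where $\Phi_\delta$ denotes the baseline gluing. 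Since both combinatorial translations are described as integer translations in the common covering $\RR^{n+2}$ of $\partial_\delta X_{v(\delta)}$ under the $\pi_1(T_\delta)$-action, their equality is a matter of checking that both sides record the same shift of the reference point.

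The main obstacle is precisely this compatibility check. It reduces to showing that the translation recorded from the $X_{v(\delta)}$-side through $\bar\gamma_\delta$ equals the one recorded from the $X_{v(\bar\delta)}$-side through $\bar\gamma_{\bar\delta}$. This is arranged by the diagonal definition $\tilde\nu_\delta = (\widehat{\nu}_\delta,\vec\Delta_{(i(v))}(\bar\xi(\bar\gamma_\delta)))$: crossing a bind-curve contributes the same diagonal vector in $\RR^{n+2}$ regardless of which side one views it from, because $\binds$ is a closed curve system in $F$ whose intersection pattern with $\bar{h}(\cfggraph)$ is a purely combinatorial invariant of the modification. With this verification in place, the family $\{\tau_v\}$ patches together to a combinatorial isometry between the two cube complexes, establishing the lemma.
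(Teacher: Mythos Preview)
Your approach is essentially the same as the paper's: define a combinatorial translation $\tau_v$ on each piece $X_v$ and verify the commuting square $\Phi_\delta\circ\tau_{v(\delta)}=\tau_{v(\bar\delta)}\circ\Phi'_\delta$ on each boundary component. The overall architecture is correct.

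However, your argument is vague at exactly the point where the paper is explicit, and this vagueness hides the real content. You assert that ``a single combinatorial translation $\tau_v$'' handles all $\delta\in\widetilde\edge(v)$ because ``all such shifts arise from one common change of basepoint $\widehat\nu_v$ together with one common change of the path system.'' But the shifts $\nu_\delta\mapsto\nu'_\delta$ genuinely depend on $\delta$: they involve the paths $\bar\gamma_\delta$ and $\bar\gamma'_\delta$, which differ from edge to edge. The paper resolves this by \emph{defining} $\tau_v$ to be the diagonal translation by $\vec\Delta_{(i(v))}(\bar\xi(\bar\beta_v))$, using only a path $\bar\beta_v$ from $\bar\nu_v$ to $\bar\nu'_v$ inside $F_v$, and then checking that this single choice works at each $\delta$. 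The verification runs through an auxiliary point $\nu^*_\delta$ (built from a path $\bar\beta_e$ in $z_e$) and the loop $g=(-\bar\beta_e)\cdot(-\bar\gamma_\delta)\cdot\bar\beta_v\cdot\bar\gamma'_\delta$ in $F_v$; the crucial identity is $\vec\xi_{(i(v))}([g])=\vec\Delta_{(i(v))}(\bar\xi([g]))$, valid precisely because $g$ lies in $\pi_1(F_v)$ and every $\xi^j$ restricts to $\bar\xi$ on $F$. This gives $\tilde\tau_v.\tilde\nu'_\delta=g.\tilde\nu^*_\delta$ in $\widehat X_v$, hence $\tau_v(\nu'_\delta)=\nu^*_\delta$ in the quotient, and the symmetric construction of $\nu^*_\delta$ and $\nu^*_{\bar\delta}$ via $\bar\beta_e$ yields $\Phi_\delta(\nu^*_\delta)=\nu^*_{\bar\delta}$.

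Your final paragraph gestures at this mechanism (``crossing a bind-curve contributes the same diagonal vector in $\RR^{n+2}$ regardless of which side one views it from''), which is morally the identity $\vec\xi_{(i(v))}([g])=\vec\Delta_{(i(v))}(\bar\xi([g]))$, but you have not written down $\tau_v$, nor the loop $g$, nor explained why the $\delta$-dependent discrepancy is absorbed by a deck transformation. Supplying those details would make your proof match the paper's.
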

		
		\begin{proof} 
			Suppose we choose a new lift of
			the configuration graph $\bar{h}':\cfggraph\to F$ with a new collection
			of lifted base-points $\widehat\nu'_v\in\widehat{F_v}$. Denote the gluing
			map with respect to the new 
			reference points by $\Phi_\delta':\partial_\delta X_{v(\delta)}\to\partial_{\bar\delta} X_{v(\bar\delta)}$.
			We need to find a combinatorial isometry $\tau_v:X_v\to X_v$ for each $v\in\vrtx(\cfggraph)$,
			such that the following commutative diagram holds for every end-of-edge $\delta\in\widetilde\edge(\cfggraph)$:
				$$\begin{CD}
					\partial_\delta X_{v(\delta)}@>\Phi'_\delta>> \partial_{\bar\delta} X_{v(\bar\delta)}\\
					@V \tau_{v(\delta)} VV	@V \tau_{v(\bar\delta)} VV\\
					\partial_\delta X_{v(\delta)}@>\Phi_\delta>> \partial_{\bar\delta} X_{v(\bar\delta)}.
				\end{CD}$$
			
			To obtain the $\tau_v$'s, let each $\widehat\beta_v$ be an immersed path in $\widehat{F}_v$
			from $\widehat\nu_v$ to $\widehat\nu_v'$, and let $\bar\beta_v$ be
			the projected immersed path in $F_v$. We define:
				$$\tau_v:X_v\to X_v,$$
			to be the combinatorial translation 
			induced from the integral translation on $\widehat{X}_v$ given by:
				$$\tilde\tau_v.(w,\,\vec\lambda_{(i(v))})\,=\,\left(w,\,\vec\lambda_{(i(v))}+
					\vec\Delta_{(i(v))}(\bar\xi(\bar\beta_v))\right).$$ 
			
			It is straightforward to check that these $\tau_v$'s satisfy the diagram above.
			To be precise, let each 
			$\bar\beta_e$ be an immersed path in $z_e$ from $\bar\nu_e$ to $\bar\nu_e'$,
			and let $\widehat\beta_\delta$ be the lifted path in $\widehat{F}_v$
			based at $\widehat\nu_\delta$, where $\delta$ is the end-of-edge adjacent to $v$ and
			carried by $e$. We further lift $\widehat\beta_\delta$ as a path $\tilde\beta_\delta$
			in $\widehat{X}_v$ based at $\tilde\nu_v$, so that the $\RR^{n+1}_{(i(v))}$
			component of the other end-point $\tilde\nu^*_\delta$ 
			is $\vec\Delta_{(i(v))}(\bar\xi(\bar\beta_e))$. Let $g\in\pi_1(F_v)$ be the element
			represented by the loop obtained from joining the consecutive directed paths $-\bar\beta_e$,
			$-\bar\gamma_\delta$, $\bar\beta_v$ and $\bar\gamma'_{\delta}$. We regard $g$ as in $\pi_1(J_v)$ via
			the natural inclusion induced by $F_v\subset J_v$. Then from the construction,
			$\vec\xi_{(i(v))}([g])=\vec\Delta(\bar\xi([g]))$. Moreover, we have:
				$$\tilde\tau_v.\,\tilde\nu'_\delta\,=\,g.\,\tilde\nu^*_\delta.$$
			This means in the quotient space $X_v$, $\tau_{v(\delta)}.\nu'_\delta=\nu^*_\delta$.
			However, from the construction we also have $\Phi_\delta(\nu^*_\delta)=\nu^*_{\bar\delta}$.
			Hence it is verified that $\Phi_\delta\circ\tau_{v(\delta)}=\tau_{v(\delta)}\circ\Phi'_\delta$.
		\end{proof}
		
		Hence the cube complex $X$ is canonically determined by the multitwist $(F,\sigma)$ and
		the cut-bind system $(\cuts,\binds)$.
		
		\begin{lemma}\label{Xproduct} 
			The cube complex $X$ is homeomorphic to $M_\sigma\times\RR^n$ with a 
			canonical isomorphism $\pi_1(X)\cong\pi_1(M_\sigma)$. 
		\end{lemma}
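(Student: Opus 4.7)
The plan is to bootstrap the piecewise product structure from Lemma \ref{X_Pproduct} to the global cube complex, by showing that the gluings $\Phi_\delta$ agree up to isotopy with the product gluings $\phi_\delta \times \id_{\RR^n}$ that define $M_\sigma \times \RR^n$.

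By Lemma \ref{X_Pproduct}, each $X_v$ carries a canonical trivial $n$-plane bundle structure over $J_v$. I fix such trivializations $\psi_v \colon X_v \xrightarrow{\cong} J_v \times \RR^n$ inducing the canonical isomorphism $\pi_1(X_v) \cong \pi_1(J_v)$. Restricting yields homeomorphisms $\partial_\delta X_v \cong T_\delta \times \RR^n$ for each end-of-edge $\delta$, under which each cube-complex gluing $\Phi_\delta$ becomes a homeomorphism $T_\delta \times \RR^n \to T_{\bar\delta} \times \RR^n$. The aim is to prove that this is isotopic to $\phi_\delta \times \id_{\RR^n}$.

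To compare the two gluings, I would observe that both $\partial_\delta X_{v(\delta)}$ and $\partial_{\bar\delta} X_{v(\bar\delta)}$ are, by construction, realized as the quotient of $\RR^{n+2}$ by the translation action of $\pi_1(T_\delta) = \pi_1(T_{\bar\delta})$ (identified via $\phi_{\delta *}$) under $g \mapsto \vec\xi([g])$. The gluing $\Phi_\delta$ is the canonical combinatorial isomorphism sending $\nu_\delta \mapsto \nu_{\bar\delta}$, so it lifts to the identity on $\RR^{n+2}$ after a translation matching base points. Consequently $\Phi_\delta$ induces $\phi_{\delta *}$ on $\pi_1$, matching the $\pi_1$-action of $\phi_\delta \times \id_{\RR^n}$. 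Since any self-homeomorphism of $T^2 \times \RR^n$ that induces the identity on $\pi_1$ is isotopic to the identity (the mapping class group of $T^2 \times \RR^n$ is exhausted by the $\mathrm{GL}_2(\ZZ)$-action on the $T^2$ factor together with orientation choices on $\RR^n$), the two gluings are isotopic.

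Since gluing by isotopic homeomorphisms yields homeomorphic spaces, I conclude $X \cong M_\sigma \times \RR^n$. The statement $\pi_1(X) \cong \pi_1(M_\sigma)$ then follows immediately from $\RR^n$ being simply connected, or directly from van Kampen applied to the graph-of-spaces structure $X \to \cfggraph$, using $\pi_1(X_v) \cong \pi_1(J_v)$ and $\Phi_{\delta *} = \phi_{\delta *}$. The main obstacle is the mapping-class-group rigidity of $T^2 \times \RR^n$; should this prove awkward to cite, one may instead pre-adjust the trivializations $\psi_v$, choosing fibers that match across the gluings so that each $\Phi_\delta$ becomes a strict product and no isotopy argument is required.
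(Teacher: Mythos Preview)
Your approach is essentially the same strategy as the paper's, and it works, but there is one soft spot and one place where the paper's packaging is cleaner.

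The soft spot is your mapping-class-group claim. As stated, ``any self-homeomorphism of $T^2\times\RR^n$ that induces the identity on $\pi_1$ is isotopic to the identity'' is false: the reflection $(x,t)\mapsto(x,-t)$ on the $\RR^n$ factor induces the identity on $\pi_1$ but is not (properly) isotopic to the identity. Your parenthetical acknowledges ``orientation choices on $\RR^n$'' as part of the mapping class group, yet you never verify that your comparison map lands in the identity component. In this particular situation it does---both $\Phi_\delta$ and the trivializations $\psi_v$ are built from \emph{affine} data, so the comparison map lifts to an affine map of $\RR^{n+2}$ fixing the $\ZZ^2$ lattice directions, and one can interpolate linearly---but you should say so rather than invoke a general statement about $\mathrm{MCG}(T^2\times\RR^n)$, which is not entirely standard.

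The paper sidesteps this by never choosing global trivializations $\psi_v$ at all. Instead it keeps the $n$-plane \emph{foliations} on each $X_v$ from Lemma~\ref{X_Pproduct} and observes that on each boundary $\RR^{n+2}/\pi_1(T_e)$ the two induced affine $n$-plane foliations (one from each side) are isotopic through affine $n$-plane foliations---which is immediate, since the space of linear complements to a fixed line in $\RR^{n+1}$ is connected. This yields a global trivial $n$-plane bundle over $M_\sigma$. Your closing ``alternative'' (pre-adjust the $\psi_v$ so the fibers match across gluings) is exactly this idea, and is the cleaner route; note, however, that you cannot literally make all boundaries match simultaneously by a single choice of $U$ in each piece---you need the interpolation/isotopy in a collar, which is what the foliation language provides.
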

		
		\begin{proof}
			Under the natural inclusion $F\subset M_\sigma$, the reference points also gives rise to
			natural reference points for the JSJ pieces and JSJ tori of $M_\sigma$. Thus the 
			caonical isomorphisms $\pi_1(X_v)\cong\pi_1(M_v)$ induces a canonical isomorphism
			$\pi_1(X)\cong\pi_1(M_\sigma)$ using the graph-of-groups decomposition.
			To see $X$ is homeomorphic to $M_\sigma\times\RR^n$, we take an $n$-plane
			foliation of $X_v$ for each $v\in\vrtx(\cfggraph)$ as in the proof of Lemma \ref{X_Pproduct}.
			For any $e\in\edge(\cfggraph)$, the foliations on the quotient of $\RR^{n+2}$ modulo $\pi_1(T_e)$
			induced from the two adjacent pieces are clearly isotopic in the space of
			(affine) $n$-plane foliations. $X$ is topologically
			a trivial $n$-plane bundle over $M_\sigma$. Hence $X$ is homeomorphic to $M_\sigma\times\RR^n$.
		\end{proof}

	\subsection{Hyperplanes in $X$}\label{Subsec-hyperplanes} 
		In this subsection, we study the
		cubical geometry of the hyperplanes of $X$. This will prepare us for the proof
		of the virtual specialness of $X$.
		
		There are $n+3$ types hyperplanes in $X$, one called the cut-type and
		the others called the bind-types, indexed by $j\in\{1,2,\cdots,n+2\}$. With the notations
		from Subsection \ref{Subsec-cubCplxX}, an edge of the cubulation of 
		$F$ is said to have the \emph{cut-type} (resp. the \emph{bind-type})
		if it is dual to a cut-curve (resp. a bind-curve). This also gives rise to
		types of edges in any covering space of $F$ with the induced cubulation. 
		In the subcomplex $X_v$ corresponding to 
		a vertex $v\in \vrtx(\cfggraph)$, a \emph{cut-type} edge is an edge
		projected from a cut-type edge parallel to $\widehat{F}$ in $\widehat{F}\times\RR^{n+1}_{(i(v))}$; and 
		a \emph{bind-type} edge of \emph{index} $j$, 
		is either an edge projected from a bind-type edge 
		parallel to $\widehat{F}$ in $\widehat{F}\times\RR^{n+1}_{(i(v))}$, if $j$ is the vertical index $i(v)$,
		or an edge projected from an edge parallel to $\RR^{n+1}_{(i(v))}$, if $j$ is a horizontal index. 
		By our construction, it is clear that every hyperplane is dual to exactly
		one type of edges. This gives rise to $n+3$ types of hyperplanes. 
		
		\begin{lemma}\label{geom-cutType} 
			The cut-type hyperplanes in $X$ are in natural bijection to
			the cut-curves on $F$, namely, the components of $\cuts$. 
			Moreover, for every cut-type hyperplane is two-sided,
			homeomorphic to $T^2\times\RR^n$. Here $T^2$ denotes the torus.
		\end{lemma}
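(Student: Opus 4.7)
The plan is to identify each cut-type hyperplane of $X$ as the image of a specific product slab in some $\widehat{X}_v$, and then extract its topology from a stabilizer calculation. Given a cut-curve $z \in \cuts$, I would first choose a vertex $v$ of $\cfggraph$ with $z \subset \overline{F_v}$, pick a lift $\tilde z$ of $z$ in $\widehat{F}_v$, and let $\tilde L_{\tilde z} \cong \RR$ denote the $1$-dimensional hyperplane of $\widehat{F}_v$ dual to $\tilde z$ in the cubulation pulled back from the $(\cuts, \binds)$-dual cubulation of $F$. I would then define $H_z$ to be the image in $X$ of $\tilde L_{\tilde z} \times \RR^{n+1}_{(i(v))} \subset \widehat{X}_v$; by construction this is a union of cut-type midcubes and hence lies in a cut-type hyperplane. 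For an interior cut-curve the slab lies entirely in $X_v$, whereas for a boundary cut-curve $z = z_e$ adjacent to $v$ and $v'$ the slabs from both sides must glue along $\Phi_\delta$ into a single subcomplex of $X$.

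To show that $z \mapsto H_z$ gives a bijection with cut-type hyperplanes, I would verify surjectivity from the fact that every cut-type edge of $X_v$ lifts to an edge of $\widehat{F}_v$ dual to some lift of a cut-curve, so midcube-connectivity inside $\widehat{X}_v$ stays within a single slab of this form; injectivity follows from the disjointness of distinct cut-curves in $F$. Two-sidedness would be inherited from the surface: cut-curves are two-sided as oriented simple closed curves in the oriented surface $F$, so $\tilde L_{\tilde z}$ carries a coherent transverse orientation that extends trivially to $\tilde L_{\tilde z} \times \RR^{n+1}_{(i(v))}$, is preserved by the $\pi_1(J_v)$-action, and matches across $\Phi_\delta$ since the gluing interchanges the inside/outside of the two adjacent JSJ pieces compatibly.

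For the topology of $H_z$, I would compute the stabilizer of $\tilde L_{\tilde z} \times \RR^{n+1}_{(i(v))}$ in $\pi_1(J_v) = \pi_1(F_v) \times \pi_1(f_v)$ to be $\pi_1(z) \times \pi_1(f_v) \cong \ZZ^2$, acting on $\RR \times \RR^{n+1}_{(i(v))} \cong \RR^{n+2}$ by translations along $(1,\,\vec\xi_{(i(v))}([z]))$ and $(0,\,\vec\xi_{(i(v))}([f_v]))$. By Lemma \ref{cohomologyClasses} every coordinate of $\vec\xi_{(i(v))}([f_v])$ is nonzero (since $\xi^j([f_v]) \neq 0$ for each horizontal index $j \neq i(v)$), so the second translation vector is nonzero and independent from the first; together they span a $2$-plane in $\RR^{n+2}$, and the quotient is $T^2 \times \RR^n$ as claimed.

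The step I expect to be the main obstacle is the boundary-gluing verification: namely, that for $z = z_e$ the two slabs in $X_v$ and $X_{v'}$ really assemble into a single hyperplane of the claimed topology rather than two. Concretely this reduces to checking, using the reference-point conventions of Subsection \ref{Subsec-cubCplxX}, that the $\ZZ^2$-actions on the two sides agree under $\Phi_\delta$ on the shared boundary, so that the two $T^2 \times \RR^n$ quotients are identified by the same translation lattice.
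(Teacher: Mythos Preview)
Your approach is essentially the same as the paper's: pick a lift of the cut-curve $z$ in $\widehat{F}_v$, look at the slab $\tilde z\times\RR^{n+1}_{(i(v))}$, and identify its stabilizer in $\pi_1(J_v)$ as (a conjugate of) $\pi_1(T_z)\cong\ZZ^2$, where $T_z=M_{\sigma|z}$, giving the quotient $T^2\times\RR^n$. The paper states this in one line without spelling out the linear-independence of the two translation vectors, which you do explicitly; also note that with the cubical parametrization of $\tilde z$ the $z$-translation is by $\bar\xi([z])$ rather than $1$, though this does not affect your independence argument. Your boundary-gluing worry for $z=z_e$ is already absorbed into Step~1 of the construction: $\partial_\delta X_v$ and $\partial_{\bar\delta}X_{v'}$ are both described as $\RR^{n+2}$ modulo the same $\pi_1(T_e)$-action, so the two ``slabs'' coincide rather than needing to be assembled.
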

		
		\begin{proof} 
			Let $z\subset\cuts$ be a cut-curve, and $v\in\vrtx(\cfggraph)$ be one of the at most two vertices 
			for which $z\subset F_v$ (possibly on the boundary). Then for any copy of the universal covering $\widehat{z}$ of $z$
			in $\widehat{F}_v$, the stabilizer of $\widehat{z}\times\RR^{n+1}_{(i(v))}$ is 
			clearly a conjugate of $\pi_1(T_z)$ in $\pi_1(J_v)$, where $T_z\subset M_\sigma$ denotes the mapping torus $M_{\sigma|z}$
			of $\sigma$ restricted to $z$, which is a torus. Thus the quotient hyperplane $H_z\subset X$ is two-sided, homeomorphic 
			to $T^2\times\RR^n$. Moreover, different choices of the copies projects to the same $H_z$, and every cut-type
			hyperplane $H\subset X$ is naturally some $H_z$ by the definition. This gives the natural bijection between cut-type
			hyperplanes of $X$ and the cut-curves on $F$.
		\end{proof}
		
		We denote the union of all the cut-type hyperplanes in $X$ as $\hypps_\cuts$, and denote the component 
		of $\hypps_\cuts$ corresponding to the cut-curve $z\subset\cuts$ as $H_z$.
		It is also clear that the components of $X-\hypps_\cuts$ are in natural
		bijection to the pairs-of-pants on $F$, namely, the components of $\pants=F-\cuts$; moreover,
		each component is homeomorphic to the product of a pair-of-pants with $S^1\times\RR^n$. 
		We denote:
			$$X_\pants\,=\,X-\hypps_\cuts,$$
		and denote the component corresponding to the pair-of-pants $P\subset\pants$ as $X_P$. 
		This induces
		a graph-of-spaces decomposition of $X$, whose dual graph:
			$$\cutgraph\,=\,\cutgraph(\pants,\cuts),$$
		is naturally isomorphic to the graph dual to the pants decomposition $\pants=F-\cuts$ of $F$.
		This decomposition also naturally agrees with the decomposition of $M_\sigma$:
			$$\pieces_{\pants}\,=\,M_\sigma-\tori_\cuts,$$ 
		where $\tori_\cuts$ is the disjoint union of the mapping tori $T_z=M_{\sigma|z}$ of $\sigma$
		restricted to each cut-curve $z\subset\cuts$, and $\pieces_{\pants}$ is the disjoint union of
		of the mapping tori $J_P=M_{\sigma|P}$ of $\sigma$ restricted to each pair-of-pants $P\subset\pants$.
		As the pants decomposition is subordinate to $(F,\sigma)$, $\cutgraph$ refines
		the configuration graph $\cfggraph$, in the sense that there is an obvious
		natural collapse $\cutgraph\to\cfggraph$. We often write $v(P)$ for the
		vertex $v\in\cfggraph$ such that $F_v$ contains $P$. We call $X_\pants=X-\hypps_\cuts$ the
		\emph{cut decomposition} of $X$.
		
		For every index $j\in\{1,2,\cdots,n+2\}$, we denote the union of all the bind-type hyperplane 
		of index $j$ as:
			$$\hypps^j\subset X.$$
			
		\begin{lemma}\label{geom-bindType} 
			For every index $j\in\{1,2,\cdots,n+2\}$,
			there are only finitely many index-$j$ bind-type hyperplanes in $X$. 
			Moreover, index-$j$ bind-type hyperplanes are all two-sided, canonically oriented, 
			so that $\hypps^j$ is dual to $\xi^j$ in $H^1(X)\cong H^1(M_\sigma)$.
		\end{lemma}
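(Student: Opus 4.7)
The plan is to analyze the index-$j$ bind-type hyperplanes piece-by-piece within the cut decomposition $X = \bigcup_v X_v$, lifting to the universal covers $\widehat X_v = \widehat F_v \times \RR^{n+1}_{(i(v))}$ and then reassembling across the gluings.

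First, I would count the index-$j$ bind-type hyperplanes inside each $X_v$ by enumerating the $\pi_1(J_v)$-orbits of their lifts. When $j = i(v)$, such a lift has the form $\widehat b \times \RR^{n+1}_{(i(v))}$, where $\widehat b$ is a hyperplane of $\widehat F_v$ dual to a bind-type edge; the $\pi_1(F_v)$-orbits of such $\widehat b$ biject with the finitely many components of $\binds \cap F_v$. When $j$ is horizontal, the lifts take the form $\widehat F_v \times \{\lambda^j = k + \tfrac{1}{2}\}$ for $k \in \ZZ$, and two values $k, k'$ yield the same hyperplane in the quotient precisely when $k - k' \in \xi^j(\pi_1(J_v))$. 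By Lemma \ref{cohomologyClasses}, the horizontal index $j$ satisfies $\xi^j([f_v]) \neq 0$, so $\xi^j(\pi_1(J_v))$ is a finite-index subgroup of $\ZZ$ and only finitely many hyperplanes arise. Since $\vrtx(\cfggraph)$ is finite and the boundary gluings can only identify hyperplanes of adjacent pieces, not create new ones, $\hypps^j$ has only finitely many components in $X$.

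Next, I would verify two-sidedness and install canonical transverse orientations. Each lifted hyperplane admits an obvious transverse direction: the positive $\lambda^j$ direction for horizontal $j$, and the preferred normal determined by the orientation of the bind-curve for $j = i(v)$. The action of $\pi_1(J_v)$ preserves these directions, since it is by translations on $\RR^{n+1}_{(i(v))}$ and by orientation-preserving deck transformations on $\widehat F_v$ that send oriented lifts of a bind-curve to oriented lifts of the same bind-curve. Compatibility of the resulting transverse orientations across the boundary gluings $\Phi_\delta$ follows from the fact that, under the identification in Step 2 of Subsection \ref{Subsec-cubCplxX}, the actions of $\pi_1(T_\delta)$ and $\pi_1(T_{\bar\delta})$ on $\RR^{n+2}$ coincide.

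For the duality claim, the plan is to build a combinatorial map $\zeta^j \colon X \to \RR/\ZZ$ whose preimage of the regular value $\tfrac{1}{2}$ is $\hypps^j$ with its prescribed orientation. On each $X_v$ I would set $\zeta^j_v(w, \vec\lambda) \equiv \lambda^j \pmod 1$ when $j$ is horizontal, and $\zeta^j_v(w, \vec\lambda) \equiv \bar\xi(w) \pmod 1$ when $j = i(v)$, using the $\pi_1(F_v)$-equivariant lift of $\bar\xi$ to a map $\widehat F_v \to \RR$. On each piece, the induced homomorphism $(\zeta^j_v)_* \colon \pi_1(X_v) \to \ZZ$ coincides with $\xi^j|_{\pi_1(J_v)}$ by construction, recalling that $\xi^j$ restricts to $\bar\xi$ on $F$. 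Passing to the graph-of-groups decomposition then gives $\zeta^j_* = \xi^j$ on $\pi_1(X) \cong \pi_1(M_\sigma)$, so $[\hypps^j]$ is Poincar\'e dual to $\xi^j$. The main obstacle is verifying that the pieces $\zeta^j_v$ match along the boundaries $\partial_\delta X_v$; this is exactly what the normalization of the reference vertices $\nu_\delta$ via the diagonal $\vec\Delta_{(i(v))}(\bar\xi(\bar\gamma_\delta))$ was set up to guarantee, though careful bookkeeping will be required to confirm it.
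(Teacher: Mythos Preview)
Your proposal is correct and reaches the same conclusion, but the route differs from the paper's in two places worth noting.

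For finiteness, the paper does not count hyperplanes piece-by-piece. Instead it observes that every bind-type hyperplane must intersect some cut-type hyperplane $H_z$ in a nonempty sub-hyperplane, and that each $H_z$ has only finitely many sub-hyperplanes because $z$ acts on the $\RR^{n+1}_{(i(v))}$ factor as a diagonal translation nontrivial in every coordinate. This is a single uniform argument, whereas your direct count splits into vertical and horizontal cases. Your approach has the advantage of actually producing the numbers $\mathrm{div}(\xi^j_P)$ that reappear in Lemma~\ref{geom-horizontal}; the paper's is shorter but less informative.

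For duality, the paper does not build a global map to $\RR/\ZZ$. It instead checks directly that algebraic intersection with $\hypps^j$ agrees with $\xi^j$ on a generating set of $H_1(X)$, namely the images of the $H_1(X_v)$'s together with the image of $H_1(\cfggraph)$ under the lift $h:\cfggraph\to X$. Your construction of $\zeta^j$ is equivalent in content, but be aware that the sentence ``passing to the graph-of-groups decomposition then gives $\zeta^j_* = \xi^j$'' is not justified by agreement on the $\pi_1(X_v)$'s alone: two classes in $H^1(X)$ can agree on every piece and still differ by something pulled back from $H^1(\cfggraph)$. The check on graph cycles is exactly the ``careful bookkeeping'' you flag at the end, and it is genuinely needed, not merely a matter of verifying that the $\zeta^j_v$'s glue. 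The paper handles this by explicitly evaluating on cycles via the canonical lift $h$ and the reference points $\nu_\delta$.
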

		
		\begin{proof} 
			Because the bind-curves $\binds$ are canonically oriented (Definition \ref{cbs}),
			and because edges in $\RR^{n+2}$ are naturally directed by the basis vectors,
			it is clear that $\hypps^j$ is two-sided and canonically oriented.
			
			To see there are only finitely many bind-type hyperplanes in $X$, observe that
			every cut-type hyperplane, as an $(n+2)$-dimensional cube complex by itself,
			has only finitely many sub-hyperplanes. 
			In fact, let $H_z\subset X$ be the cut-type hyperplane corresponding to the
			cut-curve $z\subset\cuts$. With the notations in the proof
			of Lemma \ref{geom-cutType}, the element in $\pi_1(T_z)$
			represented by the cut-curve $z$ acts on $\widehat{z}\times\RR^{n+1}_{i(v)}$ 
			as a translation by
			$\vec\xi(z)=(\xi^1([z]),\cdots,\xi^{n+2}([z]))=\vec\Delta(\bar\xi([z]))$,
			which is nontrivial on every component. Thus for every $j\neq i(v)$ in $\{1,\cdots,n+2\}$,
			there are only finitely many distinct
			sub-hyperplanes in $H_z$, which are projected by a parallel copy of $j$-th coordinate sub-hyperplane
			$\RR^n_{(i(v),j)}$, namely, the intersection of $\RR^{n+1}_{(i(v))}$ and $\RR^{n+1}_{(j)}$. 
			This means $H_z$ has only finitely many sub-hyperplanes.
			This finiteness implies the finiteness of bind-type hyperplanes in $X$, 
			because every bind-type hyperplane must intersect
			at least one $H_z$ in a (nonempty) sub-hyperplane by our construction. 
			
			Counting the algebraic intersection of $\hypps^j$ with any representative cycle
			of $H_1(X)$ induces a dual cohomology class in $H^1(X)\cong {\Hom(H_1(X),\ZZ)}$.
			To see that $\hypps^j$ is dual to $\xi^j$, 
			observe that the restriction of $\xi^j$ to each $H^1(X_v)$ is dual to $\hypps^j\cap X_v$. 
			This follows immediately from the construction of $X_v$. Furthermore,
			it is also clear from our canonical gluing that for any lift $h:\cfggraph\to X$
			of the configuration graph, $\xi^j$ evaluates on the image of $H_1(\cfggraph)$
			precisely by counting the algebraic intersection number
			of any representative cycle with $\hypps^j$. Note that the image of
			$H_1(\cfggraph)$ and all the $H_1(X_v)$'s generate $H_1(X)$. Therefore, 
			$\hypps^j$
			is dual to $\xi^j\in H^1(X)$.
		\end{proof}
		
		While they are complicated in general, bind-type hyperplanes are locally easy to understand.
		Note that for any bind-type hyperplane $H$ in $X$, and
		for any pair-of-pants $P\subset\pants$, the components of $H\cap X_P$ are all hyperplanes
		in $X_P$, with the bind-type index known as that of $H$. The \emph{vertical index} $i(P)$
		with respect to $P$ is known as the vertical index $i(v)$ with respect to
		the unique $v\in\vrtx(\cfggraph)$ such that $P\subset F_v$; 
		and the other indices are the \emph{horizontal indices} with respect to $P$.
		A bind-type hyperplane in $X_P$ of index $j$
		is said to be \emph{vertical} if its index 
		$j$ equals $i(P)$; it is said to be \emph{horizontal}, if
		$j$ differs from $i(P)$.
		
		\begin{lemma}\label{geom-vertical} 
			For every pair-of-pants $P\subset\pants$,
			the vertical hyperplanes in $X_P$ are in natural bijection to the 
			bind-arcs in $P$, namely, the components of 
			$P\cap\binds$. Moreover, each vertical hyperplane in $X_P$
			is homeorphic to $A^2\times\RR^n$,
			with the two boundary components lying on two distinct components of $\partial X_P$.
			Here $A^2$ denotes the compact annulus.
		\end{lemma}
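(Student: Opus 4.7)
The plan is to read off the vertical hyperplanes from the explicit product description of $X_P$ and its universal cover. Let $v=v(P)$. The subcomplex $X_P\subset X_v$ is the quotient of $\widehat F_v\times\RR^{n+1}_{(i(v))}$, restricted to a lift $\widehat P\subset\widehat F_v$ of $P$, by the stabilizer of $\widehat P$ in $\pi_1(J_v)$, which is a conjugate of $\pi_1(J_P)=\pi_1(P)\times\pi_1(f_v)$. By the type definitions from Subsection \ref{Subsec-hyperplanes}, and because the index of a vertical hyperplane equals $i(P)=i(v)$, a vertical hyperplane in $X_P$ is dual to an edge of $\widehat P\times\RR^{n+1}_{(i(v))}$ parallel to the $\widehat P$-factor and of bind type in $\widehat P$, i.e.\ an edge dual to a lift in $\widehat P$ of some component of $P\cap\binds$.

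First I would set up the natural map from bind-arcs to vertical hyperplanes: to a bind-arc $a\subset P\cap\binds$ I assign the hyperplane $H_a$ of $X_P$ carrying the image of $\tilde a^*\times\RR^{n+1}_{(i(v))}$, where $\tilde a\subset\widehat P$ is any lift of $a$ and $\tilde a^*$ is the dual midsegment. Well-definedness and injectivity of this assignment reduce to the observation that the projection $\widehat P\to P$ sends the $\pi_1(P)$-orbit of any $\tilde a^*$ to the midsegment of $a$ in $P$, and that distinct bind-arcs have disjoint midsegments downstairs.

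Next I would compute the topology of $H_a$. The stabilizer of $\tilde a^*\times\RR^{n+1}_{(i(v))}$ inside $\pi_1(J_P)$ should be exactly $\pi_1(f_v)\cong\ZZ$: the factor $\pi_1(f_v)$ acts trivially on $\widehat F_v$ and as translation by the nonzero vector $\vec\xi_{(i(v))}([f_v])$ on $\RR^{n+1}_{(i(v))}$ (nonzero by Lemma \ref{cohomologyClasses}), while $\pi_1(P)$ permutes the disjoint lifts of the embedded arc $a$ freely in $\widehat P$. Granting this, $H_a$ is the quotient of an interval times $\RR^{n+1}_{(i(v))}$ by a single translation parallel to the second factor, hence homeomorphic to $I\times(\RR^{n+1}_{(i(v))}/\ZZ)\cong I\times S^1\times\RR^n\cong A^2\times\RR^n$. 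The identification of the two ends of $I$ with different boundary components of $X_P$ then follows because the two endpoints of $a$ lie on distinct components of $\partial P$; this distinctness is built into Lemma \ref{cbsExist}, which produces bind-arcs only of the form $z\to z'$ or $z\to z''$ (never $z\to z$ or $z'\to z'$), the three boundary components of $P$ being distinguished by the signs of $\bar\xi$.

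The step requiring the most care is the stabilizer calculation, since it is where one genuinely uses that bind-arcs are \emph{arcs} with distinct endpoints on $\partial P$ rather than closed curves, and where the nondegeneracy of $\vec\xi_{(i(v))}([f_v])$ ensured by the construction of the classes $\xi^j$ is needed to see that $\pi_1(f_v)$ acts freely on the $\RR^{n+1}_{(i(v))}$ factor. Once that lemma is in hand, the bijection with bind-arcs, the homeomorphism type $A^2\times\RR^n$, and the separation of the two boundary circles into two components of $\partial X_P$ all follow directly from the product structures set up in Subsection \ref{Subsec-cubCplxX}.
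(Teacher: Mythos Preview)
Your proposal is correct and follows essentially the same route as the paper: lift to $\widehat P\times\RR^{n+1}_{(i(v))}$, identify a vertical hyperplane with $\tilde a\times\RR^{n+1}_{(i(v))}$ for a lifted bind-arc $\tilde a$, compute the stabilizer as $\pi_1(f_v)$, and read off the quotient as $A^2\times\RR^n$. You are in fact more explicit than the paper on two points it leaves implicit---why the stabilizer is exactly $\pi_1(f_v)$ (you separate the $\pi_1(P)$ and $\pi_1(f_v)$ actions and invoke Lemma~\ref{cohomologyClasses}), and why the two boundary components land on distinct components of $\partial X_P$ (you trace this back to the construction in Lemma~\ref{cbsExist}).
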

		
		\begin{proof} 
			Note that the universal covering $\widehat{X}_P$
			is $\widehat{P}\times\RR^{n+1}_{(i(P))}$, where $\widehat{P}$ is the universal
			covering of $P$. For every vertical hyperplane $Q\subset X_P$,
			every component $\hat{Q}\subset\widehat{X}_P$
			of its preimage is the product
			$\hat{w}\times\RR^{n+1}_{(i(P))}$, where $\hat{w}\subset\widehat{P}$ is
			a preimage-component of a unique bind-arc $w\subset P\cap\binds$. 
			The stabilizer of $\hat{Q}$ in $\pi_1(J_P)$ is $\pi_1(f_{v(P)})$.
			This implies that $Q$ is homeomorphic 
			to $A^2\times\RR^n$. 
			Moreover, different choices of the preimage-component $\hat{Q}$ projects
			to the same $Q\subset X_v$, so $Q\mapsto w$ is well-defined, sending
			every vertical hyperplane in $X_P$ to a bind-arc $w$ on $P$. It is straightforward
			to check that this defines a bijection between these two kinds of objects.
		\end{proof}
		
		For every pair-of-pants $P\subset\pants$,
		and for every horizontal index $j$, we denote the restriction
		of $\xi^j$ to $J_P$ as $\xi^j_P\in H^1(J_P)$, and
		the restriction of $\xi^j$ to $T_z$ as $\xi^j_z\in H^1(T_z)$.
		Remember also from Subsection \ref{Subsec-setupData}
		that $\bar\xi\in H^1(F)$ is the cohomology class dual to the bind-curves $\binds$.
		We denote the restriction of $\bar\xi$ to $P$ as $\bar\xi_P\in H^1(P)$. For any nonzero
		integer $d$, $\bar\xi_P$ modulo $d$ lives in $H^1(P;\,\ZZ_d)$, and it induces
		a dual cyclic covering of $P$ corresponding to the kernel of $\pi_1(P)\to
		H_1(P)\to\ZZ_d$, whose degree is the greatest common divisor of 
		$d$ and the divisibility $\mathrm{div}(\bar\xi_P)$
		of $\bar\xi_P$. Recall that the \emph{divisibility} $\mathrm{div}(\xi)$ 
		of a nontrivial element $\xi$ in a free abelian group $V$
		is the largest positive integer by which $\xi$ is divisible in $V$. 
		
		\begin{lemma}\label{geom-horizontal} 
			For every pair-of-pants $P\subset\pants$,
			and for every horizontal index $j$, there are exactly
			$\mathrm{div}(\xi^j_P)$ index-$j$ horizonal hyperplanes in $X_P$. Moreover,
			these hyperplanes are all parallel, each homeomorphic
			to $\tilde{P}^j\times\RR^n$, where $\tilde{P}^j$ is the finite cyclic covering
			of $P$ dual to $\bar\xi_P$ modulo $\xi^j([f_{v(P)}])$.
			In particular, for every cut-curve $z\subset\cuts$ on $\partial P$,
			each horizontal hyperplane has exactly $\mathrm{div}(\xi^j_z)\,/\,\mathrm{div}(\xi^j_P)$ 
			boundary components on the corresponding component of $\partial X_P$.
		\end{lemma}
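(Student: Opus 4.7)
The plan is to realize each horizontal hyperplane of index $j$ in $X_P$ as a quotient of an affine subspace in the universal cover $\widehat{X}_P=\widehat{P}\times\RR^{n+1}_{(i(P))}$. By the construction of Subsection~\ref{Subsec-cubCplxX}, such a hyperplane lifts to a disjoint, $\pi_1(J_P)$-invariant family of affine subspaces of the form $\widehat{P}\times\{\lambda^j=c\}$, so counting horizontal hyperplanes of index $j$ in $X_P$ reduces to counting $\pi_1(J_P)$-orbits on this family.

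First I would note that under the $\pi_1(J_P)$-action described in Subsection~\ref{Subsec-cubCplxX}, an element $g$ translates such an affine subspace by $\xi^j([g])$ in the $\lambda^j$-direction. Since the image of $\xi^j_P\colon H_1(J_P)\to\ZZ$ equals $\mathrm{div}(\xi^j_P)\cdot\ZZ$, the orbits are indexed by $\ZZ/\mathrm{div}(\xi^j_P)\ZZ$, giving exactly $\mathrm{div}(\xi^j_P)$ horizontal hyperplanes of index $j$ in $X_P$. They are mutually parallel because all their preimages form a parallel family of affine hyperplanes in $\RR^{n+1}_{(i(P))}$, joined to each other by edges running along the $\lambda^j$-axis.

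To identify each such hyperplane $H$ topologically, I would compute its stabilizer as $\ker(\xi^j_P)\subset\pi_1(J_P)$. Using the canonical product decomposition $\pi_1(J_P)=\pi_1(P)\times\pi_1(f_{v(P)})$ and the fact that $d_j:=\xi^j([f_{v(P)}])$ is nonzero (by Lemma~\ref{cohomologyClasses}, since $j$ is horizontal with respect to $P$), the projection $\ker(\xi^j_P)\to\pi_1(P)$ is an isomorphism onto $\{g\in\pi_1(P):\bar\xi_P(g)\in d_j\ZZ\}=\pi_1(\tilde{P}^j)$. The induced action on the preimage $\widehat{P}\times\RR^n$ of $H$ is a skew product: deck transformations of $\tilde{P}^j\to P$ on the first factor, and a translation cocycle $\rho\colon\pi_1(\tilde{P}^j)\to\RR^n$ on the second. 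The step I expect to require the most care is straightening this skew product to obtain $H\cong\tilde{P}^j\times\RR^n$; my plan is to build an equivariant continuous map $\sigma\colon\widehat{P}\to\RR^n$ satisfying $\sigma(g.w)=\sigma(w)+\rho(g)$ by a base-point-extension argument leveraging contractibility of $\widehat{P}$ and of $\RR^n$, which trivializes the associated translation bundle.

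For the boundary count, each boundary component of $H$ lying over a cut-curve $z\subset\partial P$ corresponds to a component of the preimage of $z$ in $\tilde{P}^j$. Standard covering-space theory expresses this number as $\deg(\tilde{P}^j\to P)$ divided by the order of $[z]$ in the cyclic quotient $\pi_1(P)/\pi_1(\tilde{P}^j)$. A direct computation gives the former as $d_j/\gcd(d_j,\mathrm{div}(\bar\xi_P))$ and the latter as $d_j/\gcd(d_j,\bar\xi([z]))$. Since $\mathrm{div}(\xi^j_P)=\gcd(\mathrm{div}(\bar\xi_P),d_j)$ and $\mathrm{div}(\xi^j_z)=\gcd(\bar\xi([z]),d_j)$ by analogous computations on $H_1(J_P)=H_1(P)\oplus\ZZ[f_{v(P)}]$ and $H_1(T_z)=\ZZ[z]\oplus\ZZ[f_{v(P)}]$, the ratio simplifies to $\mathrm{div}(\xi^j_z)/\mathrm{div}(\xi^j_P)$, completing the count.
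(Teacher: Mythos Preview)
Your proposal is correct and follows essentially the same route as the paper's proof: lift to $\widehat{P}\times\RR^{n+1}_{(i(P))}$, identify the preimage of an index-$j$ horizontal hyperplane as $\widehat{P}\times\{\lambda^j=\text{const}\}$, compute the stabilizer as $\ker(\xi^j_P)\leq\pi_1(J_P)$, use $\xi^j([f_{v(P)}])\neq 0$ to see that this stabilizer projects isomorphically into $\pi_1(P)$ with image $\pi_1(\tilde{P}^j)$, and count orbits and boundary components by the corresponding index computations. Your gcd reformulation of the boundary count is equivalent to the paper's direct index calculation inside $\mathrm{Im}(\xi^j_P)$.

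One remark: the step you flag as requiring the most care---straightening the skew action on $\widehat{P}\times\RR^n$ to a product---is genuinely needed, but the paper handles it tersely (it simply asserts the quotient is $\tilde{P}^j\times\RR^n$ once the stabilizer acts properly on $\widehat{P}$). The justification is exactly the affine-bundle trivialization already carried out in the proof of Lemma~\ref{X_Pproduct}: the $\RR^n$ factor is acted on only by translations, so the quotient is an affine $\RR^n$-bundle over $\tilde{P}^j$, automatically trivial. Your plan to produce an equivariant $\sigma:\widehat{P}\to\RR^n$ works and makes this explicit. Also, a small phrasing slip: the action on the $\widehat{P}$ factor is by deck transformations of $\widehat{P}\to\tilde{P}^j$ (i.e.\ by $\pi_1(\tilde{P}^j)$), not of $\tilde{P}^j\to P$.
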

		
		\begin{proof} 
			Let $Q\subset X_P$ be the horizontal hyperplane of index $j$. If $X_v$ is the
			piece containing $X_P$, then the construction of $X_v$ implies that $X_P$ is the 
			quotient of its universal covering $\widehat{X}_P\cong\widehat{P}\times\RR^{n+1}_{(i(P))}$
			by the action $\pi_1(J_P)$ defined in the same manner, and $Q$ is projected from
			from $\widehat{P}\times\RR^n_{(i(P),j)}$,
			where $\RR^n_{(i(P),j)}$ for the intersection $\RR^{n+1}_{(i(P))}\cap\RR^{n+1}_{(j)}$.
			The stabilizer:
				$$G_P^j\,=\,{\rm Stab}_{\pi_1(J_P)}\,(\widehat{P}\times\RR^n_{(i(P),\,j)}),$$ 
			consists of elements $g\in\pi_1(P)$ such that $\xi^j([g])=0$. 
			Note that the canonical Seifert-fibration induces
			the commutative diagram of groups, where the rows are short exact sequences:
				$$\begin{CD}
					1	@>>>	\pi_1(f_v)	@>>>	\pi_1(J_P)	@> p_{v}  >>	\pi_1(P)	@>>>	1\\
						@.	@V\cong VV	@VVV	@VVV	@.\\
					0	@>>>	H_1(f_v)	@>>> 	H_1(J_P)	@> p_{v*} >>	H_1(P)  	@>>>	0.
				\end{CD}$$
			Regarding $\xi^j_P$ naturally as a linear functional in
			$\mathrm{Hom}(H_1(J_P),\,\ZZ)$, the stabilizer
			$G_P^j$ is precisely the preimage in $\pi_1(J_P)$
			of $\mathrm{Ker}(\xi^j_P)$ with respect to
			the abelianization $\pi_1(J_P)\to H_1(J_P)$. 
			Because $\xi^j([f_v])$ does not vanish, (Lemma \ref{cohomologyClasses}), 
			$G_P^j\cap\pi_1(f_v)$ is trivial, so $G_P^j$ embeds into $\pi_1(P)$ under
			$p_v$. This means $G_v^j$ acts properly on the $\widehat{P}$ factor of $\widehat{P}\times\RR^n_{(i(P),\,j)}$,
			so $Q$ is homeomorphic to $\tilde{P}^j\times\RR^n$, where $\tilde{P}^j$ is the normal
			covering of $P$ corresponding to the subgroup $p_v(G_P^j)$ of $\pi_1(P)$. Moreover,
			\begin{eqnarray*}
				\pi_1(P)\,/\,p_v(G_P^j)	&\cong&	H_1(P)\,/\,p_{v*}(\mathrm{Ker}(\xi^j_P))\\
				                     	&\cong&	H_1(J_P)\,/\,\left(H_1(f_v)\oplus\mathrm{Ker}(\xi^j_P)\right)\\
				                     	&\cong&	\mathrm{Im}(\xi^j_P)\,/\,\left(\xi^j_P([f_v])\ZZ\right)\\
                     					&\cong& \mathrm{Im}(\bar\xi_P)\,/\,\left(\xi^j_P([f_v])\ZZ\right).
            \end{eqnarray*}
            From this we see that
            $\tilde{P}^j$ is a finite cyclic covering of $P$ dual to $\bar\xi_P$ modulo $\xi^j_P([f_v])$.
            
            To see there are exactly $\mathrm{div}(\xi^j_P)$ copies of $Q$ in $X_P$, note that the parallel
            copies of $\widehat{P}\times\RR^n_{(i(P),\,j)}$ are parametrized as $\ZZ$ by the $j$-th coordinate
            of $\RR^{n+2}$. The action of $\pi_1(J_P)$ on the spaces of these copies is exactly the translation
            group generated by a translation of distance $\mathrm{div}(\xi^j_P)$.
            
            Finally, the number of boundary components of $Q$ on the boundary component $H_z\subset \partial X_P$ 
            corresponding to $z\subset \partial P$ is the index of the image $\pi_1(z)$ in $\pi_1(P)\,/\,p_v(G_P^j)$. In other words,
            it is the index of the subgroup of $\mathrm{Im}(\xi^j_P)$
            generated by $\xi^j_z([z])$ and $\xi^j_z([f_v])$. Note  $\mathrm{Im}(\xi^j_P)$
            is identified as the subgroup $\mathrm{div}(\xi^j_P)\ZZ$ of $\ZZ$.
            Since $[z]$ and $[f_v]$ generates $H_1(T_z)$, $\mathrm{div}(\xi^j_z)$ is just the greatest
            common divisor of $\xi^j_z([z])$ and $\xi^j_z([f_v])$. Hence the index
            equals $\mathrm{div}(\xi^j_z)\,/\,\mathrm{div}(\xi^j_P)$.
		\end{proof}
		
		In order to capture the global geometry of bind-type hyperplanes in $X$, it is useful to consider 
		its decomposition induced from the cut decomposition of $X$.
		Specifically, let $H\subset X$ be a bind-type hyperplane. There is a naturally associated
		graph $\cutgraph_H$, whose vertices correspond to the components of $H\cap X_\pants$,
		and whose edges correspond to the components of $H\cap \hypps_\cuts$.
		Hence there is a naturally associated map $\iota_H:\,\cutgraph_H\to\cutgraph$
		between graphs, satisfying the following commutative diagram: 
			$$\begin{CD}
				H@>\subset>> X\\
				@VVV @VVV\\
				\cutgraph_H@>\iota_H>>\cutgraph,
			\end{CD}$$
		where the vertical maps are natural collapses.
		Note also that $\cutgraph_H$ is a finite graph by the proof of Lemma \ref{geom-bindType}.
		
		\begin{lemma}\label{iotaH} 
			For a bind-type hyperpane $H$ of $X$, 
			the associated map $\iota_H:\cutgraph_H\to\cutgraph$ is
			an immersion (i.e.~a local isometry) if and only if
			no component of any $H\cap X_P$ has two boundary components 
			lying on the same component of $\partial X_P$; and $\iota_H$ is
			an embedding if and only if $H\cap X_P$ is connected (possibly empty) for every $X_P$.
		\end{lemma}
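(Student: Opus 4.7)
The plan is to unwind both equivalences at the level of links of the graph map $\iota_H$. The key dictionary is that a vertex $Q$ of $\cutgraph_H$ corresponds to a component $C$ of $H\cap X_P$ for some $P$, its half-edges correspond to the boundary components of $C$ in $\partial X_P$, each such boundary component sits on a unique $\partial_\delta X_P$ with $\delta\in\widetilde\edge(v(P))$, and the induced link map sends this half-edge to the $\delta$-half-edge at $P\in\cutgraph$. Similarly, edges of $\cutgraph_H$ above $z$ are in canonical bijection with the components of $H\cap H_z$.

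Granted this dictionary, the first equivalence is essentially tautological: $\iota_H$ is a local isometry iff its induced link map at every vertex $Q$ is injective, and this unwinds precisely to the assertion that no component $C$ of any $H\cap X_P$ has two boundary components on a common $\partial_\delta X_P$. I would write this out directly and expect no obstruction here.

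For the embedding equivalence, the forward direction is immediate, since an embedding is in particular vertex-injective and vertex-injectivity translates directly to each $H\cap X_P$ having at most one component. For the converse, assume $H\cap X_P$ is connected (or empty) for every $P$. Vertex-injectivity is clear; to upgrade to edge-injectivity I would argue that any two distinct edges $e_1,e_2$ of $\cutgraph_H$ with common image $z\in\edge(\cutgraph)$ must, by vertex-injectivity, share both their endpoints $Q, Q'$, thereby producing two distinct half-edges at $Q$ mapping to the same $\delta$-half-edge at $P$; via the dictionary this would mean $H\cap X_P$ has two boundary components on a common $\partial_\delta X_P$, contradicting the immersion property.

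The delicate point, and where I expect to spend the most care, is the logical structure of the embedding direction: the contradiction in the last step requires $\iota_H$ to already be an immersion, so the second equivalence is naturally proved as a refinement of the first within the class of immersions. If instead one insists on treating the second equivalence independently, one must rule out multi-edges in $\cutgraph_H$ directly, and this would require invoking the explicit boundary counts of Lemmas \ref{geom-vertical} and \ref{geom-horizontal} to trace how a bind-type hyperplane propagates across a cut hyperplane into the opposite pair-of-pants. Beyond this subtlety, everything else is routine bookkeeping about the dual graph of the cut decomposition.
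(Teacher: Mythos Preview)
Your unwinding is exactly the right approach, and in fact the paper's own proof is a single line: ``This follows immediately from the definition.'' So at the level of method you are doing nothing different from the paper --- just more carefully.

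The subtlety you flag in the embedding direction is genuine and worth recording. Read in isolation, the ``if'' direction of the second equivalence is not quite true: connectedness of every $H\cap X_P$ gives vertex-injectivity of $\iota_H$, but a single connected component of $H\cap X_P$ can still have several boundary components on one $\partial_\delta X_P$ (this is precisely what Lemma~\ref{geom-horizontal} allows, via the factor $\mathrm{div}(\xi^j_z)/\mathrm{div}(\xi^j_P)$), producing a multi-edge in $\cutgraph_H$ over a single edge of $\cutgraph$. Your fix --- reading the second equivalence as a refinement within the class of immersions --- is the correct interpretation, and your argument for it (two edges over $z$ force, by vertex-injectivity, two half-edges at the common vertex with the same image, contradicting local injectivity) is clean. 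In the paper this imprecision is harmless: the only places the embedding criterion is invoked (Lemmas~\ref{noOsculateBind} and~\ref{noInterOsculate}) use the ``only if'' direction, and by that stage $\iota_{\ddot H}$ is already known to be an immersion via Lemma~\ref{immersionIotaH}. So your proposal is correct, and is in fact more scrupulous than the paper's one-line dismissal.
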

		
		\begin{proof} This follows immediately from the definition.\end{proof}
		
		We discuss which pathologies may occur to hyperplanes in $X$. In other words,
		we study the position relationship of hyperplanes near vertices. 
		There are two types of vertices in $X$, called the octagon type and the band type.
		An \emph{octagon-type} vertex comes from the center of the octagon in a pair-of-pants, while
		a \emph{band-type} vertex comes from the center of a band in a pair-of-pants, (cf.~Figure \ref{figCubPants2D}).
		
		\begin{lemma}\label{positionsBasic} 
			The following statements are true.
			\begin{enumerate}
				\item Every hyperplane in $X$ is two-sided with no self-intersection;
				\item Any two distinct hyperplanes of the same type do not intersect, and hence do not inter-osculate;
				\item Any two bind-type hyperplanes of distinct indices do not osculate, and hence do not inter-osculate.	
			\end{enumerate}
		\end{lemma}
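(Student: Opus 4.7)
The plan is to handle each part by exploiting the explicit descriptions of cut-type and bind-type hyperplanes from Lemmas \ref{geom-cutType}, \ref{geom-bindType}, \ref{geom-vertical}, and \ref{geom-horizontal}, together with the local product model $\widehat{X}_v=\widehat{F}_v\times\RR^{n+1}_{(i(v))}$ on each piece and the fact that the cubulation on each shared boundary $\partial_\delta X_v=\partial_{\bar\delta}X_{v'}$ is the standard one on $\RR^{n+2}$ modulo $\pi_1(T_\delta)$.

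For (1), two-sidedness is already recorded in Lemmas \ref{geom-cutType} and \ref{geom-bindType}. To rule out self-intersection I would check type by type: a cut-type hyperplane $H_z$ is an embedded $T^2\times\RR^n$ arising as the quotient of $\widehat{z}\times\RR^{n+1}_{(i(v))}$ by a conjugate of $\pi_1(T_z)$; within a piece $X_P$, vertical bind-type hyperplanes are embedded copies of $A^2\times\RR^n$ in bijection with the pairwise disjoint bind-arcs of $P\cap\binds$, and horizontal hyperplanes of index $j$ are parallel copies of $\widetilde{P}^j\times\RR^n$ separated by cubes in $\widehat{X}_P$, so no two share a square. Finally, in a boundary square of $\partial_\delta X_v$, the two pairs of parallel edges occupy two distinct coordinate directions of $\RR^{n+2}$ and hence carry distinct bind-type indices, so no single hyperplane has both pairs of such a square dual to itself.

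For (2), distinct cut-curves $z\ne z'$ in $\cuts$ are disjoint on $F$, so $H_z$ and $H_{z'}$ are disjoint subcomplexes of $X$. For two distinct bind-type hyperplanes $H\ne H'$ of the same index $j$, I restrict to each piece $X_P$: when $j=i(P)$ they correspond to disjoint bind-arcs of $P\cap\binds$ (Lemma \ref{geom-vertical}), and when $j\ne i(P)$ they are distinct parallel copies in the $\RR^{n+1}_{(i(P))}$ factor (Lemma \ref{geom-horizontal}). In either case they share no square in $X_P$, and the argument from (1) rules out a shared boundary square, so $H\cap H'=\varnothing$ and no inter-osculation is possible.

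For (3), let $e, e'$ be dual edges of bind-type hyperplanes $H, H'$ of distinct indices $j\ne j'$ meeting at a common vertex $x\in X$; I must produce a square at $x$ spanned by $e$ and $e'$. If $x$ lies in the interior of a piece $X_v$, then $e$ and $e'$ occupy two distinct coordinate directions of the local product $\widehat{F}_v\times\RR^{n+1}_{(i(v))}$ --- either both in the $\RR^{n+1}_{(i(v))}$ factor (when $j,j'\ne i(v)$), or one a bind-arc edge in $\widehat{F}_v$ and the other in $\RR^{n+1}_{(i(v))}$ (when one of $j,j'$ equals $i(v)$) --- and the product structure supplies the required square. If $x$ lies on a shared boundary, every bind-type edge at $x$ lies inside this boundary and corresponds to one of the $n+2$ coordinate directions of the standard $\RR^{n+2}$ cubulation, so the $j$-th and $j'$-th coordinate edges at $x$ again span a square. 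The main obstacle is this boundary case, where the two adjacent pieces can have distinct vertical indices $i(v)\ne i(v')$ and the vertical/horizontal labels flip across the interface; the resolution is that the cubulation on $\partial_\delta X_v$ is canonically the standard $(n+2)$-direction cubulation of $\RR^{n+2}/\pi_1(T_\delta)$, so the global index $j\in\{1,\dots,n+2\}$ of a bind-type edge is intrinsic to $X$ and not to either piece, thanks to the canonical gluing $\Phi_\delta$ constructed from the reference lifts $\nu_\delta$ in Subsection \ref{Subsec-cubCplxX}.
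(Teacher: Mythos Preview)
Your argument is correct and follows the same line as the paper's (very terse) proof: part (1) comes from Lemmas \ref{geom-cutType} and \ref{geom-bindType} together with the observation that the two pairs of parallel edges in any square of $X$ carry distinct types; part (2) is immediate from the same observation; and part (3) is the local product check that two bind-type edges of distinct indices at a vertex always span a square.

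One small clean-up: your ``boundary case'' in part (3), treating a vertex $x$ lying on a shared boundary $\partial_\delta X_v$, is vacuous. The pieces $X_v$ are glued along the hyperplanes $\partial_\delta X_v$, which are mid-cube complexes and therefore contain no vertices of $X$; every vertex of $X$ lies in the interior of a unique $X_v$, so your interior case already covers everything. (Your observation that the global bind-index is intrinsic across the gluing \emph{is} the right point, but it is needed for the well-definedness of types and for the boundary-square analysis in (1) and (2), not for an extra vertex case in (3).) The paper simply records this as ``straightforward to check at each octagon-type vertex and at each band-type vertex,'' which is exactly your interior argument.
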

		
		\begin{proof} 
			The first statement has been proved in Lemma \ref{geom-cutType},
			Lemma \ref{geom-bindType}. The second statement is true by the definition
			of types. The third statement is true by the definition of osculation. In fact, it is
			straightforward to check at each octagon-type vertex and at each band-type vertex, that two edges
			dual to two bind-type hyperplanes of distinct indices, respectively, always share a common square.
		\end{proof}
		
		There remains to be three kinds of possible pathologies:
		\begin{itemize}
			\item Self-osculation of cut-type hyperplanes;
			\item Self-osculation of bind-type hyperplanes;
			\item Inter-osculation between cut-type hyperplanes and bind-type hyperplanes.
		\end{itemize}
		To be more illustrative, let us name the edges in Figure \ref{figCubPants2D} adjacent to the octagon center
		by the compass directions, such as the east, the northwest, etc.
		Self-osculation of cut-types occurs at every octagon-type vertex, as the north and the south
		are dual to the same cut-type hyperplane. Self-osculation of bind-types occurs at an octagon-type
		vertex, when the northwest (resp. southwest) and the southeast (resp. northeast)
		are dual to the same bind-type hyperplane. 
		Inter-osculation between mixed types may also occur. For instance,
		if the northwest and the northeast are dual to the same
		bind-type hyperplane, this hyperplane will inter-osculate
		both the cut-type hyperplanes dual to the east and the west.

	\subsection{Virtual specialness of $X$}\label{Subsec-virSpX} 
		In this subsection, we show the cube complex $X$ 
		constructed in Subsection \ref{Subsec-cubCplxX} is virtually special, namely, such that a finite covering
		of $X$ is a special cube complex (Definition \ref{spCubeCplx}).
		
		Note that for any covering $\kappa:\tilde{X}\to X$, it makes sense to speak of 
		the types of hyperplanes and vertices in $\tilde{X}$, according to the type of the underlying image.
		There will also be an induced cut decomposition of $\tilde{X}$,
		over a graph $\tilde{\cutgraph}$ whose vertices correspond to the components of the
		$\kappa^{-1}(X_\pants)$, and whose edges correspond to the components of
		$\kappa^{-1}(\hypps_\cuts)$. Moreover, for any bind-type hyperplane $\tilde{H}$ of $\tilde{X}$, there is also an
		associate decomposition graph $\tilde{\cutgraph}_{\tilde{H}}$, which is a finite graph,
		and there is an associated map $\iota_{\tilde{H}}: 
		\tilde\cutgraph_{\tilde{H}}\to\tilde\cutgraph$, for which the conclusion of Lemma \ref{iotaH} still holds.
		
		Our strategy is as follows. For every index $j\in\{1,2,\cdots,n+2\}$,
		pass to a finite cyclic covering $\tilde{X}^j$ of $X$, so that 
		there is no self-osculation of cut-type hyperplanes in an $X_P$ with
		the vertical index $i(P)$ equal to $j$, 
		and that every bind-type hyperplane of
		index $j$ in $\tilde{X}^j$ induces an immersion between the decomposition graphs. 
		Then we pass to a further finite regular covering $\ddot{X}^j$
		induced by a finite regular covering of the decomposition graph, 
		so that every bind-type hyperplane of index $j$ in $\ddot{X}^j$
		induces an embedding between the decomposition graphs. This will eliminate self-osculations
		of index-$j$ bind-types as well as inter-osculations between cut-types and index-$j$
		bind-types. It follows that a common finite regular covering $\ddot{X}$
		of all these $\ddot{X}^j$'s will be special.
		
		\medskip\noindent\textbf{Step 1. The finite cyclic covering $\tilde{X}^j$.}
		Let $j\in\{1,2,\cdots,n+2\}$ be an index. Note that for every cut-curve $z\subset\cuts$,
		the restricted cohomology class
		$\xi^j_z\in H^1(T_z)$ is nontrivial as it evaluates nontrivially on $[z]\in H_1(T_z)$, 
		(Lemmas \ref{barXi}, \ref{cohomologyClasses}). Let
		the positive integer $l^j$ be the least common multiple of all
		the divisibilities $\mathrm{div}(\xi^j_z)$ as $z$ runs over the all components
		of the cut-curves $\cuts$.
		Let:
			$$\tilde\kappa^j:\,\tilde{X}^j\to X,$$
		be the finite cyclic covering corresponding to the kernel of the
		the composed homomorphism:
			$$\begin{CD}\pi_1(M_\sigma)	@>[\cdot]>>	H_1(M_\sigma)	@>{\xi^j}>>	\ZZ	@>\bmod l^j >>	\ZZ_{l^j}.\end{CD}$$ 
		It is clear that the covering degree is $l^j\,/\,\mathrm{div}(\xi^j)$.
		We denote the induced decomposition graph of $\tilde{X}^j$ as $\tilde\cutgraph^j$.
		
		\begin{lemma}\label{noOsculateCut} 
			For any pair-of-pants $P\subset\pants$
			with the vertical index $i(P)$ equal to $j$,
			and for any vertex $\tilde\nu\in\tilde{X}^j$ with $\tilde\kappa^j(\tilde\nu)\in X_P$,
			no cut-type hyperplane in $\tilde{X}^j$ self-osculates at $\tilde\nu$.
		\end{lemma}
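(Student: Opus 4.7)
The plan is to reduce to a single candidate pair of cut-type edges at an octagon-type vertex, and then verify their separation by a direct $\xi^j$-computation. Setting $\nu=\tilde\kappa^j(\tilde\nu)$, any cut-type self-osculation at $\tilde\nu$ must descend to two cut-type edges at $\nu$ dual to a common cut-type hyperplane of $X$. First I would dispose of the band-type vertices: the four polygon-sides of a band are one arc of $z$, two bind-arcs, and one arc of $z'$ (or $z''$), so the two cut-type edges at a band-type vertex are dual to two distinct cut-type hyperplanes of $X$; a cover cannot fuse distinct hyperplanes, so no such candidate pair arises in $\tilde X^j$ either.

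For an octagon-type vertex $\nu$, write $\partial P = z\cup z'\cup z''$ with $\bar\xi$-values $m,m',m''$ satisfying $m+m'+m''=0$. Arranging (as in the proof of Lemma \ref{cbsExist}) that $m'$ and $m''$ share the opposite sign from $m$, the octagon has exactly two sides along $z$, which I denote $\mathrm{arc}_N$ and $\mathrm{arc}_S$, and one side along each of $z', z''$. Thus the only candidate pair of cut-type edges at $\nu$ dual to the same cut-type hyperplane is the pair $e_N, e_S$ dual to $H_z$, and the task reduces to showing that the lifts $\tilde e_N, \tilde e_S$ at $\tilde\nu$ attach to distinct components of $(\tilde\kappa^j)^{-1}(H_z)$.

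The two lifts attach to the same component precisely when there is a path $\alpha\subset H_z$ from the midpoint $p_N$ of $e_N$ to the midpoint $p_S$ of $e_S$ such that the loop $\gamma_\alpha = e_N\cdot\alpha\cdot e_S^{-1}$ in $X$ lifts closed to $\tilde X^j$, equivalently $\xi^j([\gamma_\alpha])\equiv 0\pmod{l^j}$. Since cut-type edges contribute nothing to $\xi^j$-intersection, $\xi^j([\gamma_\alpha])$ depends only on $\alpha$ and varies over the coset $\xi^j([\gamma_{\alpha_0}]) + \mathrm{Im}\bigl(\xi^j|_{H_1(H_z)}\bigr)$ as $\alpha$ ranges. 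The assumption $i(P)=j$ forces $\xi^j([f_{v(P)}])=0$ by Lemma \ref{cohomologyClasses}, so $\mathrm{Im}\bigl(\xi^j|_{H_1(T_z)}\bigr)=m\ZZ$, $\mathrm{div}(\xi^j_z)=|m|$, and $l^j$ is a multiple of $|m|$ by definition. Hence same-lift is equivalent to $\xi^j([\gamma_{\alpha_0}])\equiv 0\pmod{|m|}$ for some fixed reference $\alpha_0$.

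The decisive step is to choose $\alpha_0$ as a path on the horizontal circle $z\times\{*\}$ of $T_z = z\times S^1 = H_z/\RR^n$ joining $p_N$ to $p_S$ through the $z'$-side of $z$, and compute. Representing $\xi^j$ as algebraic intersection with $\hypps^j$ (Lemma \ref{geom-bindType}), and identifying---via the universal-cover description of $\partial_\delta X_{v(P)}$ in Step 1 of Subsection \ref{Subsec-cubCplxX}---the index-$j$ sub-hyperplanes of $H_z$ traversed by $\alpha_0$ with the bind-endpoints on $z$ lying on the $z'$-side, one obtains $\xi^j([\gamma_{\alpha_0}]) = \pm m'$. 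Since $0 < |m'| < |m|$, this is nonzero modulo $|m|$, so $\tilde e_N, \tilde e_S$ lie in distinct components of $(\tilde\kappa^j)^{-1}(H_z)$ and no cut-type self-osculation occurs at $\tilde\nu$. I expect the main subtlety to lie in this last identification, i.e., verifying that each bind-endpoint on $z$ crossed by $\alpha_0$ contributes exactly one signed crossing with $\hypps^j$.
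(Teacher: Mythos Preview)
Your proposal is correct and follows essentially the same contradiction-via-$\xi^j$ argument as the paper: dispose of band-type vertices, then for the octagon vertex build a loop out of the two candidate cut-type edges and a path in $H_z$, and show its $\xi^j$-value is nonzero modulo $\mathrm{div}(\xi^j_z)$, hence modulo $l^j$. The difference is one of explicitness: the paper simply asserts that ``by counting the algebraic intersection number of $c$ and $\hypps^j$, $\xi^j([c])$ modulo $\xi^j([z])$ is nontrivial,'' whereas you identify the unique candidate pair $e_N,e_S$, pin down $\mathrm{Im}(\xi^j|_{H_1(T_z)})=m\ZZ$ using $i(P)=j$, and compute the obstruction as $\pm m'$ with $0<|m'|<|m|$. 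Your identification of the index-$j$ sub-hyperplanes traversed by $\alpha_0$ with bind-endpoints on $z$ is exactly right, since in the universal cover $\widehat{z}\times\RR^{n+1}_{(i(v))}\cong\RR^{n+2}$ the $\widehat{z}$-direction is the $\lambda^j$-direction when $j=i(v)$, and the vertical index-$j$ hyperplanes meet $H_z$ precisely along the bind-endpoint level sets (cf.\ Lemma~\ref{geom-vertical}).
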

		
		\begin{proof} 
			By Lemma \ref{geom-cutType}, cut-type hyperplanes do not
			self-osculate at any band-type vertex in $X$. In fact, there are two cut-type edges
			adjacent to each band-type vertex, but they are dual to different hyperplanes.
			Thus cut-type hyperplanes do not self-osculate
			at any band-type vertex in $\tilde{X}^j$ either. 
			
			Suppose there were an octagon-type vertex $\tilde\nu\in \tilde{X}^j$, such that $\tilde\kappa^j(\tilde\nu)
			\in X_P$ for some pair-of-pants $P\subset\pants$ with the vertical index $i(P)$ equal to $j$,
			and that there are two cut-type edges adjacent to it 
			dual to the same cut-type hyperplane $\tilde{H}\subset \tilde{X}^j$.
			Pick a path on $\tilde{H}$ joining the two end-points on $\tilde{H}$ of the two edges.
			Project the loop $\tilde{c}$ formed by the path and the two edges to $X$ via $\tilde\kappa^j$, 
			then $c=\tilde{\kappa}^j(\tilde{c})$ represents an element in $H_1(X)$. Let $z\subset\cuts$ be 
			the cut-curve whose corresponding hyperplane $H_z\subset X$ equals $\tilde{\kappa}^j(\tilde{H})$,
			(Lemma \ref{geom-cutType}).
			Then by counting the algebraic intersection number of $c$ and $\hypps^j$,
			$\xi^j([c])$ modulo $\xi^j([z])$ is nontrivial. Note that the absolute value of $\xi^j([z])$
			is the divisibility of $\xi^j_z$, since $H_1(T_z)$ is generated by
			$[z]$ and $[f_v]$ while $\xi^j([f_v])=0$, (Lemma \ref{cohomologyClasses}).
			Thus $\xi^j([c])$ modulo $l^j$ is nontrivial either. Such a loop $\tilde{c}$ cannot lie in the covering space
			$\tilde{X}^j$, so we reach a contradiction.
		\end{proof}
		
		\begin{lemma}\label{immersionIotaH} 
			For every index-$j$ bind-type hyperplane $\tilde{H}\subset \tilde{X}^j$,
			the associated map between the decomposition graphs:
				$$\iota_{\tilde{H}}:\,\tilde\cutgraph^j_{\tilde{H}}\to \tilde\cutgraph^j,$$
			is an immersion.
		\end{lemma}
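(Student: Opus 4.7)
The plan is to invoke Lemma \ref{iotaH}, reducing the assertion to showing that for every pair-of-pants $\tilde{P}$ in the induced cut decomposition of $\tilde{X}^j$, no component of $\tilde{H}\cap\tilde{X}_{\tilde{P}}$ has two boundary components on the same component of $\partial\tilde{X}_{\tilde{P}}$. I split according to whether the underlying pair-of-pants $P=\tilde\kappa^j(\tilde P)$ has vertical index $i(P)=j$, in which case $\tilde H\cap\tilde X_{\tilde P}$ is of vertical type, or $i(P)\neq j$, in which case it is of horizontal type.

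In the vertical case, Lemma \ref{geom-vertical} already shows that the two boundary components of any vertical hyperplane $Q\subset X_P$ lie on two distinct components of $\partial X_P$. A component of $\tilde H\cap\tilde X_{\tilde P}$ is a connected cover of such a $Q$ and its boundary components project to those of $Q$, so they must lie on distinct components of $\partial\tilde X_{\tilde P}$.

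In the horizontal case, I argue by contradiction following the template of Lemma \ref{noOsculateCut}. Suppose a component $\tilde Q$ of $\tilde H\cap\tilde X_{\tilde P}$ has two boundary endpoints $\tilde p_1,\tilde p_2$ on distinct boundary circles of $\tilde Q$ yet on the same component $\tilde H_{\tilde z}$ of $\partial\tilde X_{\tilde P}$, which projects to a cut-type hyperplane $H_z\subset X$. Joining $\tilde p_1$ to $\tilde p_2$ by a path in $\tilde Q$ and closing up by a path in $\tilde H_{\tilde z}$ yields a loop $\tilde c$; the existence of $\tilde c$ as a loop in $\tilde X^j$ forces $\xi^j([c])\equiv 0\pmod{l^j}$ for $c=\tilde\kappa^j(\tilde c)$. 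I then derive a contradiction by showing $\xi^j([c])\not\equiv 0\pmod{\mathrm{div}(\xi^j_z)}$, which suffices since $\mathrm{div}(\xi^j_z)\mid l^j$ by the definition of $l^j$ as the least common multiple of the $\mathrm{div}(\xi^j_{z'})$.

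For this computation, I use the description from the proof of Lemma \ref{geom-horizontal}: the $k=\mathrm{div}(\xi^j_z)/\mathrm{div}(\xi^j_P)$ boundary circles of $Q$ on $H_z$ correspond to the $\pi_1(T_z)$-suborbits of the $\pi_1(J_P)$-orbit of parallel $\widehat{P}\times\RR^n_{(i(P),j)}$-copies corresponding to $Q$ in $\widehat{X}_P$. Adjacent suborbits are separated in the $j$-coordinate by $\mathrm{div}(\xi^j_P)$, while within a suborbit the $\pi_1(T_z)$-action translates by $\mathrm{div}(\xi^j_z)=k\,\mathrm{div}(\xi^j_P)$. Lifting $c$ to the universal cover and tracking the $j$-coordinate of the endpoints shows that $\xi^j([c])\equiv i\,\mathrm{div}(\xi^j_P)\pmod{\mathrm{div}(\xi^j_z)}$ for some $1\leq i\leq k-1$ determined by which boundary circles contain $\tilde p_1$ and $\tilde p_2$; this residue is nonzero in $\ZZ/\mathrm{div}(\xi^j_z)\ZZ$ since $i$ is not a multiple of $k$, completing the contradiction. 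The main obstacle is the precise $j$-coordinate tracking, which amounts to unwinding the action formulas of $\pi_1(J_P)$ on $\widehat{P}\times\RR^{n+1}_{(i(P))}$ established in Subsection \ref{Subsec-cubCplxX}; once in hand, the arithmetic above finishes the argument.
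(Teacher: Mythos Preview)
Your approach is essentially the paper's: reduce via Lemma~\ref{iotaH}, handle the vertical case by Lemma~\ref{geom-vertical}, and in the horizontal case build a loop $\tilde c$ whose projection $c$ has $\xi^j([c])\not\equiv 0\pmod{\mathrm{div}(\xi^j_z)}$, contradicting $\xi^j([c])\equiv 0\pmod{l^j}$. The arithmetic you sketch is also what the paper carries out.

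There is, however, one point you leave implicit that the paper states explicitly, and it is exactly what forces $1\le i\le k-1$ rather than possibly $i\equiv 0$. You say $i$ is ``determined by which boundary circles contain $\tilde p_1$ and $\tilde p_2$'', but $\tilde p_1,\tilde p_2$ sit on boundary circles of $\tilde Q$, whereas the description you invoke from Lemma~\ref{geom-horizontal} indexes the $k$ boundary circles of $Q$ downstairs. A priori, two distinct boundary circles of $\tilde Q$ on the same $\tilde H_{\tilde z}$ might project to the \emph{same} boundary circle of $Q$; in that case your $j$-coordinate tracking yields $i\equiv 0\pmod k$ and no contradiction. What rules this out is the observation that $\tilde\kappa^j$ restricts to a homeomorphism $\tilde Q\to Q$: since $\tilde X^j$ is the cyclic cover dual to $\xi^j$ and $\hypps^j$ is dual to $\xi^j$ (Lemma~\ref{geom-bindType}), $\tilde X^j$ is obtained by cyclically gluing copies of $X-\hypps^j$, so every component of the preimage of $\hypps^j$ (in particular $\tilde Q$) embeds. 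Equivalently, the stabilizer $G_P^j=\mathrm{Stab}_{\pi_1(J_P)}(\widehat P\times\RR^n_{(i(P),j)})$ already lies in $\pi_1(\tilde X_{\tilde P})$ because $\xi^j$ vanishes on $G_P^j$, so the stabilizers in $\pi_1(J_P)$ and in $\pi_1(\tilde X_{\tilde P})$ agree. With this in hand, distinct boundary circles of $\tilde Q$ project to distinct boundary circles of $Q$, your residue $i$ is genuinely nonzero, and the rest of your argument goes through.
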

		
		\begin{proof} 
			By Lemma \ref{iotaH}, we must show that for any component $\tilde{X}_P\subset \tilde{X}^j$
			of the preimage $(\tilde\kappa^j)^{-1}(X_P)$,  no two boundary components
			of $\tilde{Q}=\tilde{H}\cap\tilde{X}_P$ lie in the same component of $\partial\tilde{X}_P$. 
			
			If $\tilde{Q}$ is vertical in $\tilde{X}_P$, this is automatically
			true by Lemma \ref{geom-vertical}.
			
			If $\tilde{Q}$ is horizontal in
			$\tilde{X}_P$, suppose there were some component $\tilde{H}\subset\partial\tilde{X}_P$ containing two
			distinct boundary components $\tilde{U},\tilde{U}'\subset\tilde{Q}$. Let $z\subset\cuts$ be the cut-curve
			so that $H_z\subset X$ equals $\tilde\kappa^j(\tilde{H})$.
			Because $\tilde\kappa^j$ is the cyclic covering induced by $\xi^j$ modulo $l^j$,
			and because the union $\hypps^j\subset X$ of index-$j$ bind-type hyperplanes is dual to
			$\xi^j$ by our construction, (Lemma \ref{geom-bindType}), 
			$\tilde{X}^j$ can be obtained by cyclically gluing 
			$l^j\,/\,\mathrm{div}(\xi^j)$ copies of 
			$X-\hypps^j$. In particular, $\tilde\kappa^j$ embeds $\tilde{Q}^j$ into $X_P$,
			so the projected images of $\tilde{U}$ and $\tilde{U}'$ in $X_v$ are different as well. 
			Pick a path in $\tilde{Q}$
			with the two end-points $\tilde\nu,\tilde\nu'$ on $U,U'$, respectively, and
			pick another path in $\tilde{H}$ joining $\tilde\nu, \tilde\nu'$.
			Project the loop $\tilde{c}$ formed by these two paths into $X_P$, 
			so that $c=\tilde{\kappa}^j(\tilde{c})$ 
			represents an element in $H_1(X_v)$. As 
			$\tilde{U},\tilde{U}'$ have distinct projected images, 
			counting the algebraic intersection number of 
			$c$ with $\hypps^j$ yields that
			$\xi^{j}([c])$ modulo $\mathrm{div}(\xi^j_z)\,/\,\mathrm{div}(\xi^j_P)$ is nontrivial,
			(Lemma \ref{geom-horizontal}). Thus
			$\xi^j([c])\bmod l^j$ is nontrivial either, which leads to a contradiction as the loop $\tilde{c}$
			cannot lie in the covering space $\tilde{X}^j$.
		\end{proof}
		
		\medskip\noindent\textbf{Step 2. The graph-induced covering $\ddot{X}^j$.}
		We find some finite regular covering $\ddot{\cutgraph}^j\to\tilde\cutgraph^j$, so that
		for every bind-type hyperplane of index $j$, 
		all the elevations of $\iota_{\tilde{H}}$ into $\ddot{\cutgraph}$ are embeddings.
		Here by an \emph{elevation} of $\iota_{\tilde{H}}$ we mean 
		a map $\iota_{\ddot{H}}:\,\ddot{\cutgraph}^j_{\ddot{H}}\to\ddot{\cutgraph}^j$
		from a connected covering $\ddot{\cutgraph}^j_{\ddot{H}}$ of 
		$\tilde{\cutgraph}^j_{\tilde{H}}$ into $\ddot{\cutgraph}^j$,
		minimal subject to the following commutative diagram of maps:
			$$\begin{CD}
				\ddot{\cutgraph}^j_{\ddot{H}}	@>\iota_{\ddot{H}}>>	\ddot{\cutgraph}^j\\
					@VVV	@VVV\\
				\tilde{\cutgraph}^j_{\tilde{H}}	@>\iota_{\tilde{H}}>>	\tilde{\cutgraph}^j.
			\end{CD}$$
		Writing $\ddot{X}\to \tilde{X}$ for the naturally induced finite regular covering of $\tilde{X}$,
		it is clear that every elevation is exactly the induced map $\iota_{\ddot{H}}$ for some component
		$\ddot{H}$ of the preimage of $\tilde{H}$.
		
		\begin{lemma}\label{embeddingIotaH} 
			There exists a finite regular covering $\ddot{\cutgraph}^j\to\tilde\cutgraph^j$, so that
			for every bind-type hyperplane of index $j$, every elevation of $\iota_{\tilde{H}}$ 
			into $\ddot{\cutgraph}^j$ is an embedding.
		\end{lemma}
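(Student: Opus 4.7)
The plan is to invoke the classical theorem of Marshall Hall for finitely generated subgroups of free groups (equivalently, Stallings' folding-completion argument for immersions of finite graphs), combined with a transitivity argument for the deck action on elevations in a regular cover.

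First observe that the graph $\tilde\cutgraph^j$ is finite, inherited through finite covers from the finite configuration graph $\cfggraph$, so $\pi_1(\tilde\cutgraph^j)$ is a finitely generated free group. By Lemma \ref{geom-bindType}, together with the finiteness of the cover $\tilde\kappa^j\colon\tilde{X}^j\to X$, there are only finitely many index-$j$ bind-type hyperplanes $\tilde{H}$ in $\tilde{X}^j$, and by Lemma \ref{immersionIotaH} each $\iota_{\tilde{H}}\colon\tilde\cutgraph^j_{\tilde{H}}\to\tilde\cutgraph^j$ is an immersion of finite graphs.

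For each such $\tilde{H}$, Marshall Hall's theorem yields a finite-index subgroup $K_{\tilde{H}}\leq\pi_1(\tilde\cutgraph^j)$ containing $\iota_{\tilde{H}*}\pi_1(\tilde\cutgraph^j_{\tilde{H}})$ such that $\iota_{\tilde{H}}$ lifts to an embedding of $\tilde\cutgraph^j_{\tilde{H}}$ as a subcomplex of the corresponding finite cover $\bar\cutgraph_{K_{\tilde{H}}}\to\tilde\cutgraph^j$. Let $N_{\tilde{H}}$ be the normal core of $K_{\tilde{H}}$, still of finite index in $\pi_1(\tilde\cutgraph^j)$, and set $N:=\bigcap_{\tilde{H}} N_{\tilde{H}}$, a finite intersection of finite-index normal subgroups. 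I would take $\ddot\cutgraph^j\to\tilde\cutgraph^j$ to be the finite regular cover corresponding to $N$.

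To verify that every elevation of each $\iota_{\tilde{H}}$ into $\ddot\cutgraph^j$ is an embedding, fix $\tilde{H}$. Since $N\leq N_{\tilde{H}}\leq K_{\tilde{H}}$, the cover $\ddot\cutgraph^j$ factors through $\bar\cutgraph_{K_{\tilde{H}}}$, so the preimage in $\ddot\cutgraph^j$ of the embedded copy of $\tilde\cutgraph^j_{\tilde{H}}$ is a subcomplex whose components are elevations of $\iota_{\tilde{H}}$ that are embeddings. Because $\ddot\cutgraph^j\to\tilde\cutgraph^j$ is regular, its deck group $G$ acts transitively on the set of all elevations of $\iota_{\tilde{H}}$: these elevations correspond to right cosets $\bar{H}\backslash G$, where $\bar{H}$ is the image of $\iota_{\tilde{H}*}\pi_1(\tilde\cutgraph^j_{\tilde{H}})$ in $G$, and $G$ acts transitively by right translation on its own coset space. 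Consequently every elevation is a deck translate of an embedded one, hence itself an embedding. The main subtlety I expect is this deck-transitivity step; without it, Marshall Hall would only guarantee \emph{some} elevation to be an embedding for each $\tilde{H}$, rather than all of them simultaneously.
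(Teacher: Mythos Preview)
Your proof is correct and follows essentially the same approach as the paper's: both invoke the subgroup separability of finitely generated free groups (you via Marshall Hall, the paper via LERF and Scott's geometric criterion) to produce, for each $\tilde{H}$, a finite cover into which $\iota_{\tilde{H}}$ lifts to an embedding, then pass to the normal core and intersect over the finitely many $\tilde{H}$'s. The only difference is that you spell out the deck-transitivity argument that justifies why \emph{every} elevation embeds once one does, whereas the paper asserts this step without further comment.
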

		
		\begin{proof} 
			This follows from the fact that finitely generated free groups are LERF, (\cite[Theorem 2.2]{Sc-LERF}).
			Recall from \cite{Sc-LERF} that a group $G$ is said to be \emph{locally extended residually finite} (or \emph{LERF}), if for
			every finitely generated subgroup $H$ of $G$, and for every $g\in G$ not in $H$, there is a finite-index subgroup $\tilde{G}$ of 
			$G$ containing $H$, which does not contain $g$. If $S$ is a connected
			CW complex with $\pi_1(S)\cong G$, then $G$ is LERF if and only if
			for every finitely generated covering $\kappa:\tilde{S}\to S$ and every compact subset
			$K\subset\tilde{S}$, $\kappa$ factors through
			a finite covering of $S$ in which $K$ embeds, (cf.~\cite[Lemma 1.4]{Sc-LERF}).
			
			Suppose $\tilde{H}\subset\tilde{X}^j$ is a bind-type hyperplane of index $j$, by
			Lemma \ref{immersionIotaH}, $\iota_{\tilde{H}}:\,\tilde\cutgraph^j_{\tilde{H}}\to
			\tilde\cutgraph^j$ is an immersion, so it is $\pi_1$-injective. 
			Let $\mathrm{Cov}(\iota_{\tilde{H}})\to\tilde\cutgraph^j$ be
			the covering corresponding to image of 
			$(\iota_{\tilde{H}})_\sharp:\,\pi_1(\tilde\cutgraph^j_{\tilde{H}})\hookrightarrow
			\pi_1(\tilde\cutgraph^j)$,
			(after picking a base-point). Then $\iota_{\tilde{H}}$ elevates to an embedding into
			$\mathrm{Cov}(\iota_{\tilde{H}})$. Because $\pi_1(\tilde\cutgraph^j)$ is a finitely generated free group, it
			is LERF, so $\mathrm{Cov}(\iota_{\tilde{H}})\to\tilde\cutgraph^j$
			factors through a finite covering $\Psi^{\tilde{H}}$,
			in which an elevated image $\tilde\cutgraph_{\tilde{H}}\hookrightarrow\Psi^{\tilde{H}}$ embeds. In other words,
			there is an elevation of $\iota_{\tilde{H}}$ into $\Psi^{\tilde{H}}$ which is an embedding.
			Pass to the characteristic covering $\ddot{\cutgraph}^{j,\tilde{H}}\to\tilde\cutgraph^j$, corresponding
			to the intersection of all the conjugates of $\pi_1(\Psi^{\tilde{H}})$ in $\pi_1(\tilde\cutgraph^j)$,
			then $\ddot{\cutgraph}^{j,\tilde{H}}$ is a finite regular covering into which every elevation of
			$\iota_{\tilde{H}}$ is an embedding.
			
			By Lemma \ref{geom-bindType}, there are only finitely many bind-type hyperplanes of index $j$ in $X$, so
			there are only finitely many index-$j$ bind-types in $\tilde{X}^j$ as well. 
			We take a common finite regular covering $\ddot\cutgraph^j$ 
			of all these $\ddot{\cutgraph}^{j,\tilde{H}}$. It is the covering as desired.
		\end{proof}
		
		Let:
			$$\ddot\kappa^j:\,\ddot{X}^j\to\tilde{X}^j,$$
		be the finite regular covering of $\tilde{X}^j$ naturally induced from the
		finite regular covering $\ddot{\cutgraph}^j\to\tilde\cutgraph^j$
		as in Lemma \ref{immersionIotaH}.
		
		\begin{lemma}\label{noOsculateBind} 
			No bind-type hyperplane of index $j$ self-osculates in $\ddot{X}^j$.
		\end{lemma}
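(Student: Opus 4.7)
The plan is to argue by contradiction. Suppose some bind-type hyperplane $\ddot{H}$ of index $j$ in $\ddot{X}^j$ self-osculates at a vertex $\ddot{\nu}$: there exist two distinct edges $\ddot{e}_1, \ddot{e}_2$ adjacent to $\ddot{\nu}$, both dual to $\ddot{H}$, not lying in any common square, whose directions toward $\ddot{\nu}$ induce the same side of $\ddot{H}$. Since bind-type edges never cross cut-type hyperplanes, both $\ddot{e}_1$ and $\ddot{e}_2$ lie in the closure of the unique piece $\ddot{X}_P$ of the cut decomposition that contains $\ddot{\nu}$.

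Next I would invoke the content of Step~2: by Lemma~\ref{embeddingIotaH} together with Lemma~\ref{iotaH}, the map $\iota_{\ddot{H}}$ is an embedding, which forces $\ddot{H}\cap\ddot{X}_P$ to be a single connected component, call it $\tilde{Q}$. Both $\ddot{e}_1$ and $\ddot{e}_2$ are dual to $\tilde{Q}$, and the claim reduces to showing that no single connected index-$j$ hyperplane inside $\ddot{X}_P$ can directly self-osculate at an interior vertex.

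To finish I would split into cases according to whether $\tilde{Q}$ is vertical ($j=i(P)$) or horizontal ($j\neq i(P)$) in $\ddot{X}_P$. In the vertical case, Lemma~\ref{geom-vertical} identifies $\tilde{Q}$ with a single lifted bind-arc in the underlying cover of the pair-of-pants $P$; since the bind-arcs in $P$ are pairwise disjoint, so are their lifts, and at any vertex the edges in the surface-factor direction cross pairwise distinct lifted bind-arcs. Therefore at most one edge adjacent to $\ddot{\nu}$ can be dual to $\tilde{Q}$, contradicting the existence of two such edges. In the horizontal case, $\tilde{Q}$ is a constant-$j$-coordinate slice in the universal cover $\widehat{X}_P = \widehat{P}\times\RR^{n+1}_{(i(P))}$, and the only edges at $\ddot{\nu}$ dual to any index-$j$ horizontal hyperplane are the $+j$ and $-j$ coordinate edges. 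If both are dual to $\tilde{Q}$, then in the universal cover they cross slices at $j$-coordinates exactly one apart, with their starting vertices $\ddot{\nu}\pm e_j$ lying on opposite sides of those respective slices. Because the covering group of $\ddot{X}_P\to X_P$ acts by integer translations on $\widehat{X}_P$, it preserves the transverse co-orientation of index-$j$ slices, so the two starting vertices project to points on opposite sides of $\tilde{Q}$ in $\ddot{X}_P$. Consequently the two directions toward $\ddot{\nu}$ induce opposite sides of $\tilde{Q}$, violating the ``same side'' clause of Definition~\ref{spCubeCplx}.

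The main obstacle I anticipate is the side-of-hyperplane bookkeeping in the horizontal case: one must verify carefully that the covering group of $\ddot{X}_P\to X_P$ really preserves the canonical transverse co-orientation along constant-$j$-coordinate slices, so that ``opposite sides'' in the universal cover passes cleanly to ``opposite sides'' in $\ddot{X}_P$, and that the paper's notion of self-osculation is the directed one of Definition~\ref{spCubeCplx}. Once these are pinned down, the vertical case is a combinatorial triviality and the horizontal case reduces to the coordinate computation above.
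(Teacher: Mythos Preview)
Your argument follows essentially the same route as the paper: localize to a cut piece $\ddot{X}_P$, use Lemmas~\ref{iotaH} and~\ref{embeddingIotaH} to force $\ddot{H}\cap\ddot{X}_P$ to be a single hyperplane, and then rule out self-osculation inside a piece via Lemmas~\ref{geom-vertical} and~\ref{geom-horizontal}. The paper compresses your vertical and horizontal cases into the single sentence ``bind-type edges adjacent at any vertex are dual to distinct hyperplanes in $X_P$''; your horizontal-case co-orientation analysis is in fact a bit more careful, since when $\mathrm{div}(\xi^j_P)=1$ the $+j$ and $-j$ coordinate edges at a vertex are dual to the \emph{same} horizontal hyperplane in $X_P$, and it is precisely your observation about opposite sides (together with the directed notion of self-osculation in Definition~\ref{spCubeCplx}) that disposes of this case.
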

		
		\begin{proof} 
			By Lemmas \ref{geom-vertical}, \ref{geom-horizontal}, no bind-type hyperplane
			in any $X_P$ self-osculates. Indeed, bind-type edges adjacent at any vertex are dual to distinct
			hyperplanes in $X_P$. Passing to the covering space, this also holds for any component
			of $(\ddot{\kappa}^j)^{-1}(X_P)$. Now let $\ddot{H}\subset\ddot{X}^j$ be any hyperplane. Suppose
			$\ddot{X}_P$ is any component of $(\ddot{\kappa}^j)^{-1}(X_P)$, for any $P\subset\pants$.
			By Lemmas \ref{iotaH}, \ref{embeddingIotaH}, $\ddot{H}$ intersects $\ddot{X}_P$ in a single hyperplane,
			if not in the empty set. 
			Thus at any vertex in $\ddot{X}_P$, at most one edge is dual to $\ddot{H}$ in $\ddot{X}^j$.
			This means $\ddot{H}$ does not self-osculate in $\ddot{X}^j$, which completes the proof.
		\end{proof}
		
		\begin{lemma}\label{noInterOsculate} 
			No bind-type hyperplane of index $j$ inter-osculates
			any cut-type hyperplane in $\ddot{X}^j$.
		\end{lemma}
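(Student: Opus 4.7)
The plan is to suppose for contradiction that an index-$j$ bind-type hyperplane $\ddot{H}$ inter-osculates a cut-type hyperplane $\ddot{H}_z$ in $\ddot{X}^j$, and to use the intersection-plus-osculation data together with the embedding property of Lemma~\ref{embeddingIotaH} to produce a square containing both osculating edges, contradicting the hypothesis. Let $\ddot{\nu}$ be the vertex witnessing osculation, with adjacent edges $e,e'$ dual to $\ddot{H},\ddot{H}_z$ respectively and not lying in a common square. Since $e'$ is cut-type and crosses $\ddot{H}_z$, the vertex $\ddot{\nu}$ lies in a piece $\ddot{X}_P$ of the cut decomposition with $z\subset\partial P$. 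I would split on whether $j$ equals the vertical index $i(P)$ of that piece.

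If $j\neq i(P)$, then by the classification of edge types in Subsection~\ref{Subsec-hyperplanes}, the edge $e$ runs in the $j$-th coordinate direction of the $\RR^{n+1}_{(i(P))}$ factor, while $e'$ is a surface-direction edge. These sit in orthogonal factors of the local product $\widehat{P}\times\RR^{n+1}_{(i(P))}$ underlying $\ddot{X}_P$, so they bound a square in $\ddot{X}^j$. This horizontal case uses neither the intersection hypothesis nor Lemma~\ref{embeddingIotaH}, only the local product structure.

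If $j=i(P)$, then both $e$ and $e'$ are surface-direction edges. Lemmas~\ref{embeddingIotaH} and~\ref{iotaH} imply that $\ddot{H}\cap\ddot{X}_P$ is a single vertical sub-hyperplane $\ddot{Q}$, and Lemma~\ref{geom-vertical} identifies $\ddot{Q}$ with a single bind-arc $w$ on the surface piece underlying $\ddot{X}_P$. The intersection part of the inter-osculation hypothesis $\ddot{H}\cap\ddot{H}_z\neq\emptyset$, together with the observation that the two bind-arcs of a bind-curve on either side of a crossing with $z$ share their common endpoint on the relevant lift $\ddot{z}$ of $z$, forces $w$ to have an endpoint on the boundary component of the surface piece corresponding to $\ddot{H}_z$. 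Projecting $\ddot{\nu}$ to the surface cubulation, which is either an octagon-type or a band-type vertex, the $w$-side and the $\ddot{z}$-side of the corresponding region meet at the corner $w\cap\ddot{z}$. The 2-cell of the surface cubulation dual to that corner, crossed trivially with the remaining directions in the $\RR^{n+1}_{(i(P))}$ factor, then provides a square containing both $e$ and $e'$, contradicting the osculation.

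The main obstacle I anticipate is the vertical case: articulating carefully that the intersection hypothesis really does place the specific bind-arc $w$ arising from the $\ddot{X}_P$-side component of $\ddot{H}$ to end on $\ddot{z}$, as opposed to merely a bind-arc coming from the $\ddot{X}_{P'}$-side component or from a different component of the preimage of $H$. The matching of bind-arcs across cut-type hyperplanes, forced by the embedding property together with the fact that lifts on both sides of a $z$-crossing belong to the same lifted bind-curve, is the crux; once this matching is secured, the construction of the desired square is immediate from the local combinatorics of the octagon or band at $\ddot{\nu}$.
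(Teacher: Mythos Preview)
Your approach is essentially the paper's: reduce to the piece $\ddot{X}_P$ containing the osculation vertex, use Lemmas~\ref{iotaH} and~\ref{embeddingIotaH} to force $\ddot{H}\cap\ddot{X}_P$ to be a single hyperplane $\ddot{Q}$, and then check locally that $\ddot{Q}$ cannot osculate the adjacent cut-type hyperplane once they are known to intersect. The paper compresses your horizontal/vertical split into a single appeal to Lemmas~\ref{geom-vertical} and~\ref{geom-horizontal}, but the content is the same.

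Your anticipated obstacle is not serious. Since $\ddot{H}_z$ separates $\ddot{X}_P$ from the neighbouring piece, any cube realizing a midcube of $\ddot{H}\cap\ddot{H}_z$ lies half in each; hence $\ddot{H}$ extends from that intersection into $\ddot{X}_P$, and by the embedding property the extension is the unique component $\ddot{Q}$. So $\partial\ddot{Q}$ meets $\ddot{H}_z$, i.e.\ the bind-arc $w$ ends on the boundary circle corresponding to $\ddot{H}_z$, exactly as you want.

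There is, however, a real missing step in your vertical case at an octagon vertex. You construct a square containing $e$ and \emph{some} cut-edge dual to $\ddot{H}_z$ (the one across the corner $w\cap\ddot z$), but you must show this is the \emph{given} osculating edge $e'$. That amounts to knowing there is only one cut-edge at $\ddot{\nu}$ dual to $\ddot{H}_z$. In $X$ this fails: the north and south edges of the octagon are dual to the same cut-type hyperplane. What saves you in $\ddot{X}^j$, precisely because you are in the case $i(P)=j$, is Lemma~\ref{noOsculateCut}: it forces the north and south edges at $\ddot{\nu}$ to be dual to distinct lifts, so the four cut-edges at the octagon vertex are dual to four distinct cut-type hyperplanes, and $e'$ is determined. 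Once you invoke Lemma~\ref{noOsculateCut} here, your argument is complete. (The paper's own proof is terse enough that this dependence is left implicit there too.)
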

		
		\begin{proof} 
			By Lemma \ref{geom-cutType}, every cut-type hyperplane of $X$ is contained in exactly
			two components of $X_\pants$. This also holds for cut-type hyperplanes in $\ddot{X}^j$. If
			a cut-type hyperplane $\ddot{S}\subset\ddot{X}$ intersect an index-$j$ bind-type hyperplane
			$\ddot{H}\subset\ddot{X}^j$. Suppose $\ddot{X}_P$ is a component of some $(\ddot\kappa)^{-1}(X_P)$,
			which is adjacent to $\ddot{S}$. It follows from Lemmas \ref{iotaH}, \ref{embeddingIotaH} that 
			the intersection $\ddot{H}\cap\ddot{X}_P$
			is a single hyperplane in $\ddot{X}_P$, and it does not osculate $\ddot{S}$,
			(Lemmas \ref{geom-vertical}, \ref{geom-horizontal}).
			This implies $\ddot{H}$ does not osculate $\ddot{S}$, which completes the proof.
		\end{proof}
		
		\medskip\noindent\textbf{Step 3. The common covering $\ddot{X}$.} 
		We take the covering:
			$$\ddot{\kappa}:\,\ddot{X}\to X,$$
		to be the common covering of all the $\ddot{X}^j$, as $j$ runs over $\{1,2,\cdots,n+2\}$.
		In other words, $\ddot{X}$ is the finite regular covering of $X$ corresponding
		to the intersection of all the subgroups $\pi_1(\ddot{X}^j)$ of $\pi_1(X)$.
		
		\begin{lemma}\label{ddotXspecial} 
			The cube complex $\ddot{X}$ is special.
		\end{lemma}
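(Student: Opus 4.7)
The strategy is to exploit the fact that $\ddot{X}$ is a common finite regular covering of each $\ddot{X}^j$ for $j\in\{1,\dots,n+2\}$, and to pull back each of the three outstanding specialness conditions from an appropriately chosen intermediate cover. By Lemma \ref{positionsBasic}, all hyperplanes of $X$ (hence of $\ddot{X}$) are two-sided with no self-intersection, hyperplanes of the same type do not osculate, and bind-type hyperplanes of different indices do not osculate. So it suffices to rule out the three pathologies listed following Lemma \ref{positionsBasic}.

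First I would record the general lifting principle: if $\pi\colon\tilde Y\to Y$ is a combinatorial covering between cube complexes, then two edges of $\tilde Y$ adjacent at a common vertex share a square in $\tilde Y$ if and only if their images share a square in $Y$ (since $\pi$ is a local isomorphism on cubes), and $\pi$ carries edges dual to a given hyperplane $\tilde H$ to edges dual to its image hyperplane in $Y$, preserving sidedness. Consequently, any self- or inter-osculation in $\tilde Y$ descends to an osculation of the same type between the image hyperplanes in $Y$; absence of osculation downstairs forces absence upstairs.

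Now I would apply this principle three times. For self-osculation of a bind-type hyperplane of index $j$ in $\ddot{X}$, Lemma \ref{noOsculateBind} rules it out in $\ddot{X}^j$, and pulling back kills it in $\ddot{X}$. For inter-osculation between a cut-type hyperplane and a bind-type hyperplane of index $j$ in $\ddot{X}$, Lemma \ref{noInterOsculate} rules it out in $\ddot{X}^j$, and the same argument applies. For self-osculation of a cut-type hyperplane at a vertex $\ddot\nu\in\ddot{X}$, note that every vertex of $X$ lies in the interior of $X-\hypps_\cuts$, so $\ddot\nu$ projects into $X_P$ for a unique pair-of-pants $P\subset\pants$; choosing $j=i(P)$, Lemma \ref{noOsculateCut} rules out such self-osculation in $\tilde{X}^j$ at every vertex above $X_P$, hence in $\ddot{X}^j$, hence in $\ddot{X}$ at $\ddot\nu$.

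Combined with Lemma \ref{positionsBasic}, this verifies every clause of Definition \ref{spCubeCplx}. All the real content resides in Lemmas \ref{noOsculateCut}, \ref{noOsculateBind}, and \ref{noInterOsculate}; the only nontrivial point in the present argument is the bookkeeping choice of $j=i(P)$ in the cut-type case, which is forced by the statement of Lemma \ref{noOsculateCut}. I do not anticipate any further obstacle.
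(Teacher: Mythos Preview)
Your proposal is correct and follows essentially the same approach as the paper: invoke Lemma \ref{positionsBasic} for two-sidedness, non-self-intersection, and the easy non-inter-osculation cases, then use the fact that $\ddot{X}$ covers each $\ddot{X}^j$ to pull back Lemmas \ref{noOsculateCut}, \ref{noOsculateBind}, \ref{noInterOsculate}. The paper's proof is terser and leaves implicit both the lifting principle and the vertex-by-vertex choice $j=i(P)$ in the cut-type case, but the argument is the same.
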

		
		\begin{proof} 
			By Lemma \ref{positionsBasic} (1), every hyperplane in $\ddot{X}$ is
			two-sided with no self-intersection. By Lemmas \ref{noOsculateCut},
			\ref{noOsculateBind} and the fact that
			$\ddot{X}$ is the common covering of all the $\ddot{X}^j$'s, 
			no hyperplane in $\ddot{X}$ self-osculates. By Lemmas \ref{positionsBasic} (2), (3),
			\ref{noInterOsculate}, there is no inter-osculation between hyperplanes in $\ddot{X}$
			either. We conclude that the cube complex $\ddot{X}$ is special.
		\end{proof}
		
		To sum up, we have shown that for a multitwist $(F,\sigma)$ with a bipartite configuration graph,
		if the current equations associated to $(F,\sigma)$ have a nondegenerate symmetric solution, then
		we can find a cut-bind system $(\cuts,\binds)$, (Subsection \ref{Subsec-setupData}), 
		and construct a canonical cube complex $X$, (Subsection \ref{Subsec-cubCplxX}). Moreover,
		$X$ is homeomorphic to $M_\sigma\times\RR^n$ for some $0\,\leq\,n\,\leq\,|\vrtx{\cfggraph}|-2$,
		(Lemma \ref{Xproduct}), and $X$ is virtually special, (Lemma \ref{ddotXspecial}). 
		
		This completes the proof of Proposition \ref{SpCubulation}.

\section{Proof of the main theorem}\label{Sec-mainProof}
	
	In this section, we summarize the proof of Theorem \ref{main-npcGM}.
	
	\begin{lemma}\label{SFCase} 
		Theorem \ref{main-npcGM} holds for aspherical compact Seifert fibered spaces.
	\end{lemma}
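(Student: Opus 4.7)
The plan is to establish the cycle of implications (1) $\Rightarrow$ (2) $\Rightarrow$ (3) $\Rightarrow$ (4) $\Rightarrow$ (1), exploiting that all four conditions are preserved under taking finite covers (for (1), by Kapovich--Leeb, cf.~Subsection \ref{Subsec-npc}). One is thus free to pass to a finite cover at any stage. The central external input in the Seifert case is the Gromoll--Wolf characterization \cite{GW}: an aspherical compact Seifert fibered space $M$ is nonpositively curved if and only if it is virtually a product $F \times S^1$ for some compact surface $F$ other than $S^2$.

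Given this, the implications (1) $\Rightarrow$ (2) $\Rightarrow$ (3) $\Rightarrow$ (4) can be handled directly. For (1) $\Rightarrow$ (2), after reducing to $M = F \times S^1$, one exhibits a special cube complex homotopy equivalent to $F \times S^1$: if $\partial F \neq \emptyset$, then $F$ retracts onto a graph, and a graph-times-circle is itself a compact nonpositively curved special square complex; if $F$ is closed, one takes a standard nonpositively curved square complex structure on $F$ and multiplies by $S^1$, then passes to a finite cover to kill self-osculations and inter-osculations, as is classical for closed surface groups. For (2) $\Rightarrow$ (3), a compact special cube complex $X$ has a finite hyperplane graph $\Gamma$, and the local isometry $X \looparrowright \mathcal{C}(\Gamma)$ furnished by Theorem \ref{Thm-HW} is $\pi_1$-injective, embedding $\pi_1(X)$ into the finitely generated RAAG $A(\Gamma)$. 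Then (3) $\Rightarrow$ (4) is immediate from the virtual RFRS-ness of finitely generated RAAGs recorded in Subsection \ref{Subsec-RFRS}.

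The main work is (4) $\Rightarrow$ (1). Agol's criterion (Theorem \ref{Thm-Ag}) forces $M$ to virtually fiber over the circle. I would then invoke the classical fact that a compact aspherical Seifert fibered space virtually fibers over $S^1$ if and only if it is virtually a product $F \times S^1$: in the bounded case this holds automatically once a finite cover removes the exceptional fibers, turning the base orbifold into a surface with boundary; in the closed case virtual fibering forces the rational Euler number of the Seifert fibration to vanish, and a further finite cover then makes the fibration an honest product. Gromoll--Wolf \cite{GW} then yields (1). The main obstacle, such as it is, is bookkeeping -- carefully justifying the reduction from virtual fibering to a virtual product structure in all subcases (bounded versus closed, with or without exceptional fibers) permitted by the asphericity hypothesis -- rather than any genuinely difficult step, since every nontrivial input is already available in the references cited.
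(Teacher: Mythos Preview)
Your proposal is correct and follows essentially the same route as the paper: both arguments hinge on the Gromoll--Wolf characterization that an aspherical compact Seifert fibered space is nonpositively curved if and only if it is virtually a product $F\times S^1$, and on the equivalence of this with virtual fibering (nonempty boundary, or closed with vanishing Euler class of the fiber). The paper's proof is a terse sketch that declares the remaining verifications ``straightforward to check'', whereas you spell out the cycle $(1)\Rightarrow(2)\Rightarrow(3)\Rightarrow(4)\Rightarrow(1)$ explicitly---in particular the construction of a virtually special cubulation of $F\times S^1$ and the passage from virtual fibering back to a virtual product structure---but no new idea is involved.
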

	
	\begin{proof} 
		A compact Seifert space is aspherical if and only if its base orbifold is
		compact and aspherical. It is virtually fibered if and only
		if either it has nonempty boundary, or that it
		is closed with vanishing Euler class of the fiber.
		Note that this is exactly the case when the Seifert fibered
		space is nonpositively curved, cf.~\cite{KL,GW}. 
		The other three statements are straightforward to check.
	\end{proof}

	\begin{lemma}\label{SolCase} 
		None of the statements in
		Theorem \ref{main-npcGM} is true if $M$ is virtually the
		mapping torus of an Anosov torus automorphism. Hence they are equivalent in this case.
	\end{lemma}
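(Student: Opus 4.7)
The plan is to show all four statements fail for such $M$, which makes them vacuously equivalent. Each of the four properties is a commensurability invariant---(1) by \cite{KL}, and (2)--(4) by their very \emph{virtually} phrasing---so we may pass to a finite cover and assume $M = M_\sigma$ is literally the mapping torus of an Anosov automorphism $\sigma$ of $T^2$. Thus $\pi_1(M) \cong G := \ZZ^2 \rtimes_A \ZZ$ for some hyperbolic $A \in \SL(2,\ZZ)$.

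Statement (1) fails at once from the cited results of Gromoll--Wolf and Yano (\cite{GW}, \cite{Ya}): a $\Sol$-geometric $3$-manifold admits no complete nonpositively curved Riemannian metric.

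The main work will be showing that (4) fails, for which we rule out RFRS-ness for every finite-index subgroup of $G$. The key structural observation is that any finite-index $H \leq G$ has the form $K \rtimes_{A^k} \ZZ$, where $K := H \cap \ZZ^2$ has finite index in $\ZZ^2$ and $k \geq 1$ is the index of the image of $H$ in $G/\ZZ^2 \cong \ZZ$. Abelianizing the semidirect product yields $H^{ab} \cong \bigl(K/(A^k - I)K\bigr) \oplus \ZZ$, and hyperbolicity of $A$ forces $\det(A^k - I) \neq 0$ for every $k \geq 1$; thus $K/(A^k - I)K$ is finite, so the image of $K$ in $H^{ab}$ is purely torsion, and every homomorphism from $H$ to a free abelian group annihilates $K$. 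In a hypothetical RFRS filtration $H = H_0 \triangleright H_1 \triangleright \cdots$ with $\bigcap_i H_i = \{1\}$, this forces $H_i \cap \ZZ^2 \subseteq H_{i+1}$; together with the obvious $H_{i+1} \cap \ZZ^2 \subseteq H_i \cap \ZZ^2$, the sequence $H_i \cap \ZZ^2$ is constant, equal to $H \cap \ZZ^2$. But $H \cap \ZZ^2$ has finite index in $\ZZ^2$, hence is nontrivial, contradicting $\bigcap_i H_i = \{1\}$.

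Statements (2) and (3) then fail as consequences, because the implications $(2)\Rightarrow(3)\Rightarrow(4)$ are already in hand: a virtually special cube complex yields a virtual embedding of $\pi_1(M)$ into a finitely generated right-angled Artin group by Theorem \ref{Thm-HW}, and finitely generated right-angled Artin groups are virtually RFRS as recalled in Subsection \ref{Subsec-RFRS}. The hard part of the plan is the RFRS step; it ultimately reduces to the linear-algebraic fact that the Anosov condition makes $A^k - I$ nonsingular for every $k \geq 1$, which rigidly anchors the $\ZZ^2$-fiber inside every finite-index subgroup and obstructs killing it via successive free-abelian quotients.
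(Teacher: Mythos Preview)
Your proof is correct and follows the same approach as the paper's: both show (1) fails via \cite{GW,Ya}, show (4) fails by arguing that the fiber subgroup survives every RFRS-type filtration, and then deduce the failure of (2) and (3) from the implications $(2)\Rightarrow(3)\Rightarrow(4)$. The paper's treatment of the RFRS step is terser---it simply asserts that $\bigcap_i G_i$ always contains a finite-index subgroup of the commutator of $\pi_1(M)$---so your explicit computation of $H^{\mathrm{ab}}\cong\bigl(K/(A^k-I)K\bigr)\oplus\ZZ$ for an arbitrary finite-index $H$ supplies precisely the justification the paper omits; one small slip: the author of \cite{Ya} is Yau, not Yano.
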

	
	\begin{proof} 
		Such an $M$ is $\Sol$-geometric, so it is not nonpositively curved,
		(\cite{GW}, \cite{Ya}). It is clearly not virtually RFRS either.
		In fact, for any sequence of finite index subgroups
		$\pi_1(M)=G_0\triangleright G_1\triangleright\cdots$ 
		with each $G_i\to G_i\,/\,G_{i+1}$ factoring through a free abelian group, 
		the intersection of all the $G_i$'s always
		contains a finite-index subgroup the commutator subgroup of $\pi_1(M)$.
		Note that the second and the third statement implies the fourth, (cf.
		Subsections \ref{Subsec-RAAG}, \ref{Subsec-RFRS}). Thus none of the statements
		in Theorem \ref{main-npcGM} holds for such an $M$.
	\end{proof}
		
	\begin{lemma}\label{reduction} 
		If the conclusion of Theorem \ref{main-npcGM} holds
		for the mapping torus $M_\sigma$ of every multitwist $(F,\sigma)$ with a bipartite 
		configuration graph, it holds for any nontrivial compact graph manifold as well.
	\end{lemma}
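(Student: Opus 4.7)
The plan is to establish the cycle $(1)\Rightarrow(2)\Rightarrow(3)\Rightarrow(4)\Rightarrow(1)$ for a general nontrivial compact graph manifold $M$ by bootstrapping from the case of bipartite multitwist mapping tori already handled by the standing hypothesis. The two easy implications hold universally: $(2)\Rightarrow(3)$ is Theorem \ref{Thm-HW} applied to the compact special cube complex supplied by $(2)$, and $(3)\Rightarrow(4)$ follows from the virtual RFRS-ness of finitely generated right-angled Artin groups noted in Subsection \ref{Subsec-RFRS}. So I only need to chase $(4)\Rightarrow(1)$ and $(1)\Rightarrow(2)$.

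First I would handle the closed case. Suppose $M$ is closed and satisfies $(1)$ or $(4)$. Either way $M$ virtually fibers: by Theorem \ref{Thm-Sv} under $(1)$, and by Theorem \ref{Thm-Ag} under $(4)$. Lemma \ref{mappingTorus} then produces a finite cover $\widetilde{M}$ that is either the mapping torus of an Anosov automorphism of $T^2$ or of a multitwist with bipartite configuration graph. The former is ruled out by Lemma \ref{SolCase}, since $(1)$ and $(4)$ both descend to $\widetilde{M}$. Hence $\widetilde{M}$ falls under the standing hypothesis, all four statements hold for it, and the required implication $(4)\Rightarrow(1)$ or $(1)\Rightarrow(2)$ on $M$ follows by virtual invariance of the conclusion.

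For the case where $M$ has nonempty boundary, condition $(1)$ is automatic by Leeb's theorem (cf.~Subsection \ref{Subsec-npc}), so the task reduces to verifying $(2)$. I would double $M$ along its toral boundary to form $DM$, using a Leeb NPC metric on $M$ whose boundary is flat and totally geodesic so that the reflected metric on $DM$ is again NPC. Then $DM$ is a closed NPC graph manifold, and it remains nontrivial because every JSJ torus of $M$ survives as an essential torus of $DM$. By the closed case just established, $DM$ satisfies $(2)$. Now $\pi_1(M)$ is a retract of $\pi_1(DM)$ via the folding map $DM\to M$, so any virtual cocompact special action of $\pi_1(DM)$ restricts to a virtual cocompact special action of $\pi_1(M)$, yielding $(2)$ on $M$.

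The main obstacle I anticipate is this last transfer: $(2)$ is a statement about a compact homotopy-equivalent special cube complex, not merely an abstract subgroup of a right-angled Artin group. The hard part is to verify that virtual compact specialness in the sense of $(2)$ really is inherited by retract subgroups; this is a by-now-standard fact inside the Haglund--Wise framework, but it should be cited carefully. The remaining bookkeeping---that $DM$ is nontrivial, that Leeb's NPC metric can be arranged with product collar so that reflection yields a genuine NPC metric on $DM$, and that $(1)$ and $(4)$ descend to finite-index subgroups in order to invoke Lemma \ref{SolCase}---is routine but worth spelling out.
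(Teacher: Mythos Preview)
Your approach is essentially the paper's: reduce the bounded case to the closed case by doubling along the boundary using Leeb's totally-geodesic-boundary metric, and reduce the closed case to bipartite multitwist mapping tori via virtual fibering (Theorems \ref{Thm-Sv}, \ref{Thm-Ag}) together with Lemmas \ref{mappingTorus} and \ref{SolCase}. The logical structure and the key inputs are identical; only the order of presentation differs.

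The one place where you diverge is in your self-identified ``main obstacle'': you worry about transferring a \emph{cocompact} special cubulation from $\pi_1(DM)$ to its retract $\pi_1(M)$. In the paper's formulation this issue does not arise, because statement (2) does not demand compactness --- indeed Proposition \ref{SpCubulation} produces $M_\sigma\times\RR^n$, which is noncompact when $n>0$. With that reading, (2) passes to \emph{any} subgroup, not just retracts: if a finite-index $G'\leq\pi_1(DM)$ is $\pi_1$ of a special cube complex $X$, then $G'\cap\pi_1(M)$ acts freely on $\widetilde X$ and the quotient is a covering space of $X$, hence itself special and an Eilenberg--MacLane space for $G'\cap\pi_1(M)$. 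So the paper simply says ``$\pi_1(M)$ is a subgroup of $\pi_1(W)$, so the same holds'' and moves on; no retract machinery or Haglund--Wise convexity citation is needed.
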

	
	\begin{proof}
		The bounded case reduces to the closed case. In fact, if $M$ is a compact graph manifold
		with nonempty boundary, it follows from \cite{Le} that
		$M$ supports a nonpositively curved Riemannian metric with totally geodesic
		boundary. Then the double $W$ of $M$ along the boundary is closed and 
		nonpositively curved. Assuming that Theorem \ref{main-npcGM} holds
		in the closed case, then $\pi_1(W)$ is virtually specially cubulated, and is virtually a subgroup
		of a right-angled Artin group, and is virtually RFRS. As $\pi_1(M)$ may
		be regarded as a subgroup of $\pi_1(W)$, the same 
		holds for $\pi_1(M)$.
		
		The closed case reduces to the fibered case. This is because all the statements of
		Theorem \ref{main-npcGM} are virtual properties, and each
		of them implies that the manifold is virtually fibered, (Theorems \ref{Thm-Sv},
		\ref{Thm-HW}, \ref{Thm-Ag}). Furthermore, Lemmas \ref{mappingTorus}, \ref{SolCase} imply that
		it suffices to prove Theorem \ref{main-npcGM} for the mapping torus
		of multitwists with a bipartite 
		configuration graphs.
	\end{proof}
	
	After these reductions, we are ready to prove Theorem \ref{main-npcGM}.
	
	\begin{proof}[{Proof of Theorem \ref{main-npcGM}}] 
		We reduced to the case for nontrivial graph manifolds
		by Lemma \ref{SFCase}. Moreover,
		by Lemma \ref{reduction}, it suffices to assume
		that $M$ is the mapping torus $M_\sigma$ of some multitwist $(F,\sigma)$ with a bipartite configuration graph.
		
		To see (1) $\Rightarrow$ (2), note the nonpositive curving implies
		a nondegenerate symmetric solution of the BKN equations associated to $(F,\sigma)$, (Theorem \ref{Thm-BS}).
		By Proposition \ref{nsSolutions}, the current equations virtually have a nondegenerated symmetric solution
		as well. After passing to a finite covering, we can apply Proposition \ref{SpCubulation} to construct a special cube
		complex homotopy equivalent to $M_\sigma$.
		
		To see (2) $\Rightarrow$ (3), we apply Theorem \ref{Thm-HW}. Note that $\pi_1(M_\sigma)$ is finitely generated,
		so we may further require the right-angled Artin group $A(\Gamma)$
		to be finitely generated, for instance, by taking subgroup $A(\Gamma')$ generated only
		by the generators of $A(\Gamma)$ that spell
		the words representing the image of the generators of $(M_\sigma)$.
		
		To see (3) $\Rightarrow$ (4), observe that finitely generated Artin groups are all virtually RFRS, (cf. Subsection
		\ref{Subsec-RFRS}).
		
		To see (4) $\Rightarrow$ (1), note the virtual RFRS-ness implies virtually a nondegenerate symmetric
		solution of the current equations, (Lemma \ref{RFRSnsSol}), and hence a nondegenerate symmetric
		solution to the BKN equations, (Proposition \ref{nsSolutions}). By Theorem \ref{Thm-BS}, $M_\sigma$ is 
		nonpositively curved. This completes the proof.
	\end{proof}

\bibliographystyle{amsalpha}

%\bibliography{../refs}

\end{document}